\documentclass[12pt]{article}

\usepackage[british]{babel}
\usepackage[babel]{csquotes}
\usepackage{amsmath}
\usepackage{amssymb}
\usepackage{amsthm}     % for theoremstyles
\usepackage{amsfonts}   % for \mathbb{E}
\usepackage{mathtools}  % for := \coloneqq
\usepackage{mathrsfs}   % for \mathsrc{S}
\usepackage{bbm}        % for \mathbbm{1} (schöne indikatorfunktion)
\usepackage{ragged2e}	% für \Centering und \justifying (zentriert text sodass links und rechts alles auf gleicher Höhe)
\usepackage{dsfont}

\usepackage{graphicx}
\usepackage{caption}
\usepackage{subcaption}
\usepackage{float}

\usepackage{tikz}
\usepackage{pgfplots} 
\usepackage{lscape} 

\usepackage{hyperref}
\usepackage{enumitem}
\usepackage{ulem}
\usepackage{url}
\usepackage{eurosym}

\newtheorem{Definition}{Definition}[section]

\newtheorem{Theorem}[Definition]{Theorem}
\newtheorem{Lemma}[Definition]{Lemma}
\newtheorem{Proposition}[Definition]{Proposition}

\newtheorem{Corollary}[Definition]{Corollary}

\newcommand{\1}{\mathds{1}}
\newcommand{\Cov}{\text{Cov}}
\DeclareMathOperator*{\esssup}{ess\,sup}

\def\p{\mathbb{P}}								% nur das P; \def statt \newcommand, da \P ansonsten schon definiert
\def\E{\mathbb{E}}                              % nur das E
\def\V{\mathbb{V}}                              % nur das V

\newcommand\R{\mathbb{R}}
\newcommand\N{\mathbb{N}}

\newcommand\eps{\varepsilon}

\definecolor{mygreen}{RGB}{3,184,0}
\definecolor{mylila}{RGB}{186,0,183}

\newcommand\pt[1]{\text{[#1 \ifnum #1=1{point}\else{points}\fi]}}

\let\BFseries\bfseries\def\bfseries{\BFseries\mathversion{bold}}

\pgfplotsset{compat=1.18}

\usepackage{pifont}% Zapf-Dingbats Symbole

\RequirePackage{amsthm,amsmath,amsfonts,mathrsfs,amssymb,hyperref}
\usepackage{xcolor}

\DeclareMathAlphabet{\mathdutchcal}{U}{dutchcal}{m}{n}
\SetMathAlphabet{\mathdutchcal}{bold}{U}{dutchcal}{b}{n}
\DeclareMathAlphabet{\mathdutchbcal}{U}{dutchcal}{b}{n}
\def\dd{\mbox{d}}
\def\E{\mathbb{E}}
\def\V{\mathbb{V}}
\def\P{\mathbb{P}}
\def\R{\mathbb{R}}
\def\eps{\varepsilon}

\def\deq{\mathrel{\stackrel{d}{=}}} % equality in distribution
\usepackage{enumitem}
\setenumerate[1]{label=(\textup{\alph*})}
\setenumerate[2]{label=(\textup{\roman*})}
\def\dd{\mbox{d}}
\title{Persistence Probability of Fractional Brownian Motion with Random Hurst Exponent}
\author{
Frank Aurzada and Sabine Müller\thanks{Corresponding author: \texttt{sabine.mueller@tu-darmstadt.de}}\\
Technical University of Darmstadt, Darmstadt, Germany
}
\date{\today}

\begin{document}

\maketitle

\begin{abstract}
We study the persistence properties of a fractional Brownian motion
whose Hurst exponent is a random variable instead of a fixed constant. For each fixed \(H \in (0,1)\), it is well 
known that the persistence probability of an FBM below a constant barrier 
decays like \(T^{-(1-H)+o(1)}\), as $T$ tends to infinity, cf.\ \cite{molchan99}. Our object of interest is the 
persistence probability of the process resulting from first randomly selecting $H\in (0,1)$ and then considering a fractional Brownian motion with this value of $H$ as a Hurst exponent, a process that is referred to as a fractional Brownian motion with random exponent.
We prove that its persistence probability decays as $T^{-(1-H_0)+o(1)}$, as $T$ tends to infinity, where $H_0$ is the essential supremum of the distribution of the random Hurst exponent.
\end{abstract}

\allowdisplaybreaks

%%%%%%%%%%%%%%%%%%%%%%%
%%%%%%%%%%%%%%%%%%%%%%%
\section{Introduction and Main Result}
%%%%%%%%%%%%%%%%%%%%%%%
%%%%%%%%%%%%%%%%%%%%%%%

Fractional Brownian motion (FBM), first introduced by Kolmogorov \cite{kolmogorov1940_wienersche_spiralen} and popularized by Mandelbrot and Van Ness \cite{MandelbrotVanNess1968FBM}, is a centered Gaussian process $(B_t^H)_{t \geq 0}$ with covariance function
\begin{align*}
    \E\left[B_t^H B_s^H \right] = \frac{1}{2}\left( t^{2H}+s^{2H} - |t-s|^{2H}\right), \quad \text{$s, t \geq 0$,}
\end{align*}
where $H\in (0,1)$ is the Hurst exponent. The process is $H$-self-similar and has stationary increments and therefore is widely used as a  model for anomalous diffusion \cite{metzler2000random,MetzlerJeonCherstvyBarkai2014,RossoZoia2014FirstPassageAnomalousDiffusion}. 
However, recent single-particle tracking experiments in biological cells revealed highly complicated anomalous diffusion phenomena that cannot be adequately captured by a single self-similar model with fixed exponent \cite{BenelliWeiss2021NJP, SpecknerWeiss2021Entropy, SadoonWang2018PRE, CherstvyThapaWagnerMetzler2019MucinHydrogels}. 
This has motivated the study of extensions of classical FBM in which the Hurst exponent is allowed to vary, leading naturally to models with a random Hurst exponent. 

In this paper, we study the persistence probability of the extension introduced and systematically analysed in \cite{balcerek2022_FBMre_chaos}, namely the so-called FBMRE, fractional Brownian motion with random exponent. 
Subsequently, a number of works have explored further aspects of this model and closely related random exponent constructions, including statistical procedures to distinguish constant-$H$ FBM from random-$H$ behavior \cite{grzesiek2024_distinguishing_chaos}, extensions to Riemann--Liouville type processes \cite{woszczek2025_rlFBMre_chaos}, and dynamical or limit-theorem perspectives leading to FBM-type limits with (possibly time-dependent) random Hurst exponents \cite{bender2024_limit_theorem_arxiv}. 
Our contribution is to determine the annealed persistence asymptotics for this model; in particular, we prove that the persistence probability decays as \(T^{-(1-H_0)+o(1)}\), where \(H_0\) is the essential supremum of the random Hurst exponent.

The definition of the FBMRE is as follows. Let $\{(B_t^{H})_{t\ge 0}: H\in(0,1)\}$ be a family of fractional Brownian motions defined on a common probability space and let $\mathcal H$ be an $(0,1)$-valued random variable independent of this family. We write $\mathbb P_B$ (and sometimes only $\mathbb P$ if there is no ambiguity) for the probability measure underlying the family of fractional Brownian motions, and we denote the probability measure underlying $\mathcal H$ by $\p_\mathcal{H}$. Fractional Brownian motion with random exponent (FBMRE) is defined on the product space with product measure $\mathbb P_{\mathcal{H}}\otimes \mathbb P_B$ by simply plugging in $\mathcal{H}$ as Hurst exponent into the family $B^H$: $(B_t^\mathcal{H})_{t\geq 0}$.
In other words, conditionally on $\mathcal{H}=H$ the process is an FBM with Hurst exponent $H$, so that each single trajectory resembles a classical FBM path. Unconditionally, however, $B^\mathcal{H}$ is a mixture over Hurst exponents and in general no longer an FBM; in particular, it is neither self-similar in the usual sense nor a Gaussian process.

The main purpose of this work is to study the persistence probability for FBMRE. Persistence concerns the events when a stochastic process stays below a prescribed level for a long time, or equivalently that it exhibits an unusually long excursion without crossing a barrier. 
More generally, for a stochastic process $(Z_t)_{t\in I}$, where $I\subseteq \R$ is some index set, and a level $a\in\mathbb R$, one is interested in the decay rate of the probability
\[
\mathbb P\!\left( \sup_{t\in[0,T]\cap I} Z_t < a \right), \quad \text{as $T\to\infty$}.
\]
In many important examples, these probabilities decay polynomially, i.e.\ as $T^{-\theta +o(1)}$, and it is the first goal to find the persistence exponent $\theta$. For the relevance of this question in theoretical physics we recommend the monograph \cite{metzler2014firstpassage} and the surveys \cite{BrayMajumdarSchehr2013Persistence}, \cite{Majumdar1999PersistenceNoneq}. For an overview of the mathematical literature, we refer to the survey \cite{AurzadaSimon2015Persistence}.

From a physical point of view, persistence probabilities describe survival or non-crossing events, for instance the probability that a fluctuating interface, a tracer particle, or a transport coordinate has not yet reached a prescribed threshold or absorbing boundary. Such questions are particularly relevant for heterogeneous anomalous diffusions, where different particles, spatial regions, or experimental realisations may exhibit different effective Hurst exponents.
In this situation, the random exponent should be interpreted as a simple trajectory-wise model for quenched heterogeneity. The annealed persistence probability then corresponds to the survival probability observed at the ensemble level, when the individual value of the Hurst exponent is not conditioned on. Our result shows that the long-time decay of this ensemble survival probability is not governed by a typical or averaged Hurst exponent, but by the largest admissible value. Thus the slowest-decaying, most persistent subpopulation controls the asymptotic persistence behaviour,
which can be relevant for interpreting long-time first-passage data in heterogeneous media such as crowded biological environments, viscoelastic materials, or other systems displaying trajectory-to-trajectory variability in anomalous diffusion.

Before introducing the main result for FBM with random Hurst parameter, let us review the literature on fixed Hurst parameter FBM.  For the classical FBM with fixed $H\in(0,1)$, the persistence problem under a constant barrier is by now well understood at the level of the exponent.
In particular, Molchan \cite{molchan99} (see \cite{Aurzada:2011:OSE,Aurzada18,Peng2023FractionalBrownianMotion} for refinements) proved that
\begin{equation} \label{eqn:molchanT}
\mathbb P\!\left(\max_{t\in[0,T]} B_t^H < 1\right)
= T^{-(1-H)+o(1)},\qquad T\to\infty.
\end{equation}
For physical applications where the FBM persistence exponent $\theta=1-H$ appears naturally, see \cite{KrugKallabisMajumdarCornellBraySire1997,Majumdar2003MatheronDeMarsily,ZoiaRossoMajumdar2009,MajumdarRossoZoia2010}.
We take \eqref{eqn:molchanT} as the benchmark for our setting with random Hurst exponent. The object of interest in this paper is the annealed persistence probability
\begin{align*}
    \E_\mathcal{H}\left[ \p_B\left( \max_{t\in [0,T]}B_t^\mathcal{H}<1\right)\right],
\end{align*}
where $(B_t^\mathcal{H})_{t\geq 0}$ is an FBMRE. Before we can formulate the main result, we need to fix one more notation. For an FBMRE $(B_t^\mathcal{H})_{t\geq 0}$, we define
\begin{align*}
    H_0:= \esssup \mathcal{H},
\end{align*}
i.e.\ $\mathcal{H}\le H_0$ almost surely and for every $\varepsilon>0$,
$\P_\mathcal{H}(\mathcal{H} > H_0-\varepsilon) > 0$.

With that notation in place, we are ready to state our main result.

\begin{Theorem}\label{Thm}
    Let $(B_t^\mathcal{H})_{t\geq 0}$ be an FBMRE. Then % The asymptotic behavior of its persistence probability is given by
    \begin{align*}
        \E_\mathcal{H}\left[\p_B\left(\max_{t\in[0,T]}B_t^\mathcal{H} < 1\right)\right] = T^{-(1-H_0)+o(1)}, \quad \text{as $T \rightarrow \infty$}.
    \end{align*}
\end{Theorem}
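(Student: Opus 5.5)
The plan is to reduce the annealed statement to a \emph{locally uniform} version of Molchan's estimate \eqref{eqn:molchanT}. Write $p_H(T):=\p_B(\max_{t\in[0,T]}B^H_t<1)$, so that the quantity in Theorem~\ref{Thm} is $\E_\mathcal{H}[p_\mathcal{H}(T)]$. The key auxiliary statement I would prove first is the following. For every $\delta>0$ and every compact $K\subset(0,1)$ there is $T_0$ such that
\[
T^{-(1-H)-\delta}\le p_H(T)\le T^{-(1-H)+\delta}\qquad\text{for all } H\in K,\ T\ge T_0 .
\]
It must be supplemented by an upper bound valid for all small $H$: for every $h\in(0,1)$ and $\delta>0$, $p_H(T)\le T^{-(1-h)+\delta}$ for all $H\le h$ and $T$ large. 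Given these, the theorem follows quickly.

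For the lower bound, fix $\eps>0$ and use $K=[H_0-\eps,H_0]$ if $H_0<1$. Then
\[
\E_\mathcal{H}[p_\mathcal{H}(T)]\ge \p_\mathcal{H}(\mathcal{H}>H_0-\eps)\,T^{-(1-H_0+\eps)-\delta},
\]
and the probability factor is positive by the definition of $H_0$, so letting $\eps,\delta\downarrow0$ gives the lower bound. If $H_0=1$ the same argument with $K=[1-\eps,1-\eps/2]$ gives $T^{-\eps-\delta}$, which is enough.

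For the upper bound, $\mathcal{H}\le H_0$ a.s. If $H_0<1$, the small-$H$ bound with $h=H_0$ gives $\E_\mathcal{H}[p_\mathcal{H}(T)]\le T^{-(1-H_0)+\delta}$ directly. If $H_0=1$ the upper bound $T^{o(1)}$ is trivial.

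The uniform estimates are where the work lies. For the compact range I would retrace the proof of \eqref{eqn:molchanT} as refined in \cite{Aurzada:2011:OSE,Aurzada18}, checking that every constant depends continuously on $H$. These constants come from the stationary Lamperti-type process $e^{-Hu}B^H_{e^u}$, moment bounds for $\int_0^T e^{B^H_s}\,\dd s$ or $\sup_{[0,1]}B^H$, and Gaussian tail and Hölder estimates. All of these involve covariance functions that are jointly continuous in $(H,s,t)$, and Gaussian concentration gives bounds uniform over compacts of $(0,1)$.

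To go from uniform on compacts to all $H\le h$, I would first try a monotonicity or comparison argument. One option is to use self-similarity to rewrite $p_H(T)=\p(\max_{[0,1]}B^H<T^{-H})$. Another is to bound $p_H(T)$ by the probability that $B^H$ stays below $1$ at sparse (say geometric) time points. There the correlations of $B^H$ for small $H$ are weak, so a Slepian or decoupling estimate should give decay at least $T^{-(1-h)+\delta}$ uniformly for $H\in(0,h_1]$ with $h_1$ small. The remaining range $[h_1,h]$ is compact and is covered by the first estimate, since $1-H\ge 1-h$ there.

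I expect this uniform upper bound as $H\downarrow0$ to be the main obstacle. Molchan's constants may degenerate at the boundary, so a separate crude argument for small $H$, giving any decay rate of at least $1-h$, is needed. If the sparse-times idea fails, I would instead use the bound $p_H(T)\le\p(\max_{k\le T}B^H_k<1)$ and exploit that the increments of $B^H$ are negatively correlated for $H<1/2$. A comparison with a random walk then yields decay $T^{-1/2}$, and iterating on blocks should push the exponent close to $1$.
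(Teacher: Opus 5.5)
Your reduction to uniform fixed-$H$ bounds is valid, and your lower bound follows the paper's strategy: make Molchan's argument uniform for $H$ bounded away from $0$, then restrict to $\{\mathcal H\ge H_0-\eps\}$. Continuity of constants alone is not quite enough there, because the $o(1)$ in Molchan's Statement~1 comes from a limit. The paper obtains a uniform rate in Lemma~\ref{Lem:Conv_Speed} via a Garsia--Rodemich--Rumsey moment bound on the H\"older constant. Also, for $H_0=1$ the interval $[1-\eps,1-\eps/2]$ may carry no $\mathcal H$-mass; use $[1-\eps,1-\eta]$ with $\eta$ small instead. The genuine gap is the upper bound uniformly as $H\downarrow0$. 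You correctly name it as the main obstacle but do not resolve it. It is also not a ``crude'' bound: if $H_0$ is small you need $p_H(T)\le T^{-(1-H_0)+\delta}$, an almost sharp exponent near $1$, simultaneously for all $H\in(0,h_1]$.

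Your main mechanism rests on a false premise: the correlations of $B^H$ do not become weak as $H\to0$. For fixed $0<s<t$ one has $\E[B^H_sB^H_t]\to\frac12$ and $\V[B^H_t]\to1$, so the finite-dimensional limit is $(Z_t-Z_0)/\sqrt2$ with i.i.d.\ standard normal $Z$. Hence for $N$ fixed sample points, the sampled persistence event itself has probability tending to $\E[\Phi(Z_0+\sqrt2)^N]=N^{-1+o(1)}$ as $H\to0$. No bound uniform in $H$ obtained from these points can beat that, whatever comparison or decoupling you use. With $N\asymp\ln T$ geometric points you get at best $(\ln T)^{-1+o(1)}$, and any sampling set needs at least $T^{1-H_0-o(1)}$ points, which is no longer sparse.

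The fallback fails as well. Negatively correlated increments give no Slepian comparison with a random walk: after normalising variances, $\mathrm{corr}(B^H_1,B^H_k)\approx\frac12k^{-H}$ exceeds $k^{-1/2}$ for $H<1/2$, nearby pairs go the other way, and the normalised barriers differ. Moreover, $T^{-1/2}$ would not suffice for $H_0<1/2$, and ``iterating on blocks'' runs straight into the long-range dependence that makes the exponent $1-H$ hard.

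The missing idea in the paper is an $H$-dependent discretisation. It bounds $p_H(T)$ by persistence on the grid $\{k/m\}$ with $m=\lceil H^{-1/2}\rceil^2$. Corollary~\ref{Cor:No_Cond_on_H} then gives, at level $-1$, a bound $\lesssim n^{H-1}/(m\sqrt H)\le n^{H-1}/\sqrt m$, which cancels the $H^{-1/2}$ blow-up of $\E[M_1^H]$. Passing from $-1$ to $+1$ uses a change of measure with $f=\frac{2}{\kappa_{n,m}}K^H(1/m,\cdot)$, whose RKHS norm satisfies $\|f\|^2\le16m^{2H}$; this is bounded uniformly in $H$ for this $m$. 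The remaining cost $\sqrt{\ln(1/p_{n,m})}$ is controlled by the uniform lower bounds of Lemmas~\ref{Lem:Case1} and~\ref{Lem:Case2}: a records argument for $H\gtrsim\ln\ln n/\ln n$, and the Borovkov--Zhitlukhin comparison below that. The resulting $\ln m$ growth is absorbed by the factor $1/\sqrt m$. This gives $p_H(T)\le c\,n^{H_0-1}e^{c\ln(n)^{1/2+\delta}}$ for all $H\le H_0$ at once, all from Proposition~\ref{Prop:No_cond_on_H} (with $m=1$ when $H$ is bounded away from $0$).
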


The appearance of \(H_0\) in the theorem above has a simple heuristic interpretation. For a fixed Hurst exponent \(H\), Molchan's result \eqref{eqn:molchanT} says that the persistence probability decays like \(T^{-(1-H)+o(1)}\). Hence larger values of \(H\) correspond to smaller persistence exponents and therefore to slower decay. In the annealed average over the random Hurst exponent, one may therefore expect the long-time behaviour to be dominated by trajectories with Hurst exponent close to the largest possible value \(H_0\). This heuristic leads precisely to the exponent \(1-H_0\) in Theorem~\ref{Thm}. However, this reasoning cannot be turned into a proof by simply averaging the fixed-\(H\) asymptotics in \eqref{eqn:molchanT}, since the corresponding \(o(1)\)-terms degenerate as \(H\downarrow 0\).

For an FBM with fixed $H$, the persistence result in (\ref{eqn:molchanT}) is equivalent to 
\begin{equation} \label{eqn:molchaneps}
\mathbb P\!\left(\max_{t\in[0,1]} B_t^H < \eps\right)
= \eps^{\frac{(1-H)}{H}+o(1)},\qquad \eps\to 0,
\end{equation}
which follows directly from the self-similarity of FBM. However, we stress that FBMRE is in general not self-similar.
Nonetheless, it is natural to consider the related small-barrier problem, where the time interval is kept fixed and the barrier is sent to zero. For fixed $H$, the exponent \((1-H)/H\) is decreasing in $H$, so that larger Hurst exponents again correspond to slower decay. Thus, also in the small-barrier regime, one expects the annealed asymptotics to be governed by values of the random Hurst exponent close to \(H_0\). The result in this case is as follows.

\begin{Theorem} \label{thm:smallbarrier}
    Let $(B_t^\mathcal{H})_{t\geq 0}$ be an FBMRE. Then
    \begin{align}
        \E_\mathcal{H}\left[\p_B\left(\max_{t\in[0,1]}B_t^\mathcal{H} < \varepsilon\right)\right] = \varepsilon^{\frac{1-H_0}{H_0}+o(1)}, \quad \text{as $\varepsilon \rightarrow 0$.} \label{eqn:Anew}
    \end{align}
\end{Theorem}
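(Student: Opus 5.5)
Write $p_H(\varepsilon):=\p_B(\max_{t\in[0,1]}B^H_t<\varepsilon)$. The plan is to prove matching lower and upper bounds for $\E_\mathcal{H}[p_\mathcal{H}(\varepsilon)]$, and the upper bound will need uniformity in $H$.

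\emph{Lower bound.} Fix $\delta>0$. It suffices to treat the event $\{\mathcal{H}\in(H_0-\delta,H_0]\}$, which has positive probability by the definition of $H_0$. On this set I need a lower bound for $p_H(\varepsilon)$ that is uniform in $H$; Molchan's bound \eqref{eqn:molchaneps} alone only gives it pointwise. There are two ways to get uniformity.
\begin{itemize}
\item Use Slepian-type comparisons or monotonicity: by self-similarity, $p_H(\varepsilon)=\p(\max_{[0,T]}B^H<1)$ with $T=\varepsilon^{-1/H}$.
\item More robustly, use a quantitative version of Molchan's lower bound, $\p(\max_{[0,T]}B^H<1)\ge c\,T^{-(1-H)}(\log T)^{-C}$, with constants uniform for $H$ in compact subintervals of $(0,1)$, as in \cite{Aurzada:2011:OSE,Aurzada18}.
\end{itemize}
For $H\in[H_0-\delta,H_0]$ (if $H_0$ is close to $1$, take a compact $[H_0-\delta,H_0-\delta/2]$ instead), this gives $p_H(\varepsilon)\ge \varepsilon^{(1-H)/H+\eta}\ge \varepsilon^{(1-H_0+\delta)/(H_0-\delta)+\eta}$ for small $\varepsilon$. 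Integrating over this set and letting $\delta,\eta\to0$ yields the lower bound.

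\emph{Upper bound.} This is the main obstacle, since $\mathcal H$ may put mass near $0$, where the exponent $(1-H)/H$ blows up and the $o(1)$ terms degenerate. What saves us is monotonicity in the right direction: small $H$ gives faster decay. So I aim for a uniform bound
\[
p_H(\varepsilon)\le C_\eta\,\varepsilon^{\frac{1-H}{H}-\eta}\quad\text{for all }H\in(0,H_0],
\]
or at least $p_H(\varepsilon)\le \varepsilon^{(1-H_0)/H_0-\eta}$ uniformly in $H\le H_0$ for small $\varepsilon$. Since $(1-H)/H\ge(1-H_0)/H_0$, the latter already suffices after taking expectations.

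To get it, I would split into two ranges.
\begin{itemize}
\item For $H\in[h_0,H_0]$ with a small fixed $h_0$: re-run Molchan's upper bound argument, via the integral representation $\E[(\int_0^T e^{B^H_t}dt)^{-1}]$ or the stationary Lamperti transform with the Gaussian tail / Talagrand-type estimates. Track the constants; they depend continuously on $H$ in a compact range, giving uniformity there.
\item For $H<h_0$: use a crude bound. Choose $n\approx\varepsilon^{-\kappa}$ well-separated times $t_i\in[1/2,1]$. The vector $(B^H_{t_i})$ has nearly independent coordinates when $H$ is small, since the correlations $\tfrac12(t^{2H}+s^{2H}-|t-s|^{2H})$ approach $1/2$ uniformly. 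Compare via Slepian with an exchangeable Gaussian vector with correlation close to $1/2$, i.e.\ $\sqrt{\rho}Z+\sqrt{1-\rho}\,\xi_i$. This gives $\p(\max_i B_{t_i}<\varepsilon)\le \E[\Phi((\varepsilon-\sqrt\rho Z)/\sqrt{1-\rho})^n]$, which is smaller than any fixed power $\varepsilon^{(1-H_0)/H_0}$ by taking $n$ large. Concretely, this forces $Z\lesssim -\sqrt{2\log n}$, costing probability $n^{-c}$, which is a power of $\varepsilon$ adjustable through $\kappa$.
\end{itemize}
Combining both ranges gives $\E_\mathcal H[p_\mathcal H(\varepsilon)]\le \varepsilon^{(1-H_0)/H_0-\eta'}$.

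The delicate part is the uniform-in-$H$ Molchan upper bound on $[h_0,H_0]$. I expect this is also what the proof of Theorem~\ref{Thm} needs, so the two theorems should share this lemma.
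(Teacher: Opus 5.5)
\textbf{Your upper bound has a genuine gap in the small-$H$ range.} Look first at your crude bound. With $n\approx\varepsilon^{-\kappa}$ points in $[1/2,1]$, some pair lies at distance $d\le 1/(2(n-1))$. Its correlation $\rho$ satisfies $1-\rho\le 2^{2H-1}d^{2H}\le \tfrac12 (n-1)^{-2H}$. For fixed $H>0$ this tends to $0$ as $\varepsilon\to0$, so the correlations are \emph{not} uniformly close to $1/2$. That holds only when $H\ln n\to0$, i.e.\ $H\ll 1/(\kappa\ln(1/\varepsilon))$. An equicorrelated Slepian comparison gives at best about $n^{-(1-\rho)/\rho}$. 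Since $\frac{1-\rho}{\rho}\ln n\lesssim (n-1)^{-2H}\ln n\lesssim 1/H$, this is never smaller than roughly $e^{-c/H}$, whatever $n$ you choose. That bound does not improve as $\varepsilon\to0$, and it beats $\varepsilon^{(1-H_0)/H_0}$ only for $H\lesssim 1/\ln(1/\varepsilon)$.

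Hence the range $C/\ln(1/\varepsilon)\le H<h_0$, which $\mathcal H$ may charge, is covered by neither of your two pieces. On $[h_0,H_0]$ the uniform Molchan upper bound is only asserted (``track the constants''), not proved. A bound uniform all the way down to $H\downarrow0$ would give your upper bound via self-similarity, but producing one is exactly the hard work of Section~\ref{sec_UB}: Lemmas~\ref{Lem:Case1} and~\ref{Lem:Case2}, and Proposition~\ref{Prop:No_cond_on_H} with the $H$-dependent grid $m=\lceil 1/\sqrt{H}\rceil^2$.

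The uniform-in-$H$ quantitative Molchan lower bound you attribute to the literature is also not stated there in uniform form; the paper proves its own uniform version of Molchan's Statement~1 for Theorem~\ref{Thm}. For the lower bound this is repairable, e.g.\ by an Egorov-type argument on the sets where the pointwise bound holds for all $\varepsilon<1/k$. On your route, however, uniformity cannot be avoided for the upper bound.

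\textbf{The missing idea makes all of this unnecessary: monotonicity in $H$ at fixed horizon $[0,1]$ and fixed barrier $\varepsilon$.} By Lemma~\ref{lem:SlepianZeug}, a Slepian comparison, $H\mapsto p_H(\varepsilon)$ is non-decreasing.
\begin{itemize}
\item Upper bound: since $\mathcal H\le H_0$ a.s., $p_{\mathcal H}(\varepsilon)\le p_{H_0}(\varepsilon)=\varepsilon^{(1-H_0)/H_0+o(1)}$, and that is the entire argument.
\item Lower bound: on $\{\mathcal H\ge H_0-\delta\}$ one has $p_{\mathcal H}(\varepsilon)\ge p_{H_0-\delta}(\varepsilon)$. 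The bound then follows from Molchan's asymptotics at the single exponent $H_0-\delta$, followed by $\delta\to0$.
\end{itemize}
Only fixed-$H$ asymptotics are ever used, so mass of $\mathcal H$ near $0$ is harmless.

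Your first lower-bound bullet mentions monotonicity. You then pass through self-similarity to the fixed-barrier, long-time formulation, where monotonicity in $H$ fails (see the discussion around \eqref{eqn:monotonicityquestion}). For the same reason, your closing expectation that the two theorems share a lemma is not how the paper proceeds. Theorem~\ref{thm:smallbarrier} is easy precisely because this monotonicity is available, whereas Theorem~\ref{Thm} needs the uniform machinery because it is not.
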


%%%%%%%%%%%%%%%%%%%%%%%%%%
%%%%%%%%%%%%%%%%%%%%%%%%%%
%%%%%%%%%%%%%%%%%%%%%%%%%%

We shall see that Theorem~\ref{thm:smallbarrier} is in fact easy to prove; essentially it follows from (\ref{eqn:molchaneps}) and an application of Slepian's lemma. To the contrary, the proof of Theorem~\ref{Thm} is surprisingly involved. We stress again that Theorems~\ref{Thm} and~\ref{thm:smallbarrier} are not directly related due to the lack of self-similarity of FBMRE. 

Sometimes, one allows the Hurst exponent of an FBM to be equal to $1$, in which case one defines $B_t^1:=t\xi$, $t\ge0$, where $\xi\sim\mathcal N(0,1)$, see e.g.\ \cite{Cheridito2001MixedFBM}. Accordingly, one may extend the notion of an FBMRE to the case where $\mathcal H$ (still independent of the family $\{(B_t^H)_{t\ge0}:H\in(0,1]\}$) has an atom at $1$. It is easy to see that $\P\Big(\max_{t\in[0,T]}B_t^1<1\Big)\to 1/2$, so that Theorem~\ref{Thm} and Theorem~\ref{thm:smallbarrier} readily extend to the case where $\mathcal{H}$ has mass at $1$.

Let us discuss some open problems and generalisations.

\paragraph{Monotonicity in $H$.}  The approach 
 to estimate the small barrier probability (\ref{eqn:Anew}) works relatively easily because for the occurring probability, the function
\begin{align*}
    H\mapsto \p\left( \max_{t\in [0,1]}B_t^H < \varepsilon\right)
\end{align*}
is non-decreasing (cf.\ Lemma~\ref{lem:SlepianZeug} below). One is tempted to conjecture that a similar result holds for the function
\begin{align} \label{eqn:monotonicityquestion}
    H \mapsto \p\left( \max_{t\in [0,T]}B_t^H < 1\right),
\end{align}
enabling to proceed in the proof of Theorem~\ref{Thm} exactly as in the proof of Theorem~\ref{thm:smallbarrier}. 
However, as shown in the recent work \cite{tang_personal_communication}, the function in \eqref{eqn:monotonicityquestion} is not monotone in \(H\) for any \(T>1\). 

\paragraph{Multi-parameter FBM with random Hurst exponent.} Two generalizations of fractional Brownian motion with multi-dimensional index sets, $[0,\infty)^d$ or $\R^d$, are the fractional Brownian sheet, on the one hand, and L\'evy's fractional Brownian motion, on the other hand. For the latter, the main result for fixed Hurst exponent is given in Theorem~3 of \cite{molchan99}, the asymptotics is polynomial, as one would expect. One could also study this with randomized Hurst exponent. On the other hand, the fractional Brownian sheet can be defined in an anisotropic way with Hurst exponents $(H_1,\ldots,H_d)$. Similarly to the present paper, one can sample the Hurst exponents $\mathcal{H}=(\mathcal{H}_1,\ldots,\mathcal{H}_d)$ randomly with -- this time -- a distribution $\mathbb P_\mathcal{H}$ on $(0,1)^d$. The results we are aware of for deterministic Hurst exponents show an interesting behaviour in the sense that the persistence probability is of order $e^{-\theta (\ln T)^d (1+o(1))}$ instead of $T^{-\theta+o(1)}$. The main references are \cite{LiShao2004LowerTailGaussianProcesses,Molchan2008UnilateralSmallDeviations,Molchan2022PersistenceLamperti}. The question for the fractional Brownian sheet seems very interesting since $\mathcal{H}$ is then a $d$-dimensional random variable. We expect that the techniques from the present paper can be of use for this problem.

\paragraph{Other Generalizations.} As mentioned above, many generalizations of FBM with non-constant Hurst exponent exist. The persistence probabilities are not studied for any of them, be it in the form of Theorem~\ref{Thm} or Theorem~\ref{thm:smallbarrier}. Therefore, there are many possible generalizations of the present results and techniques. Let us mention some of the related work: Beyond randomizing a single trajectory-level exponent -- meaning that we sample the Hurst exponent once and then keep it fixed along the entire sample path -- there is a broad literature on models with time-varying local regularity, most prominently multifractional Brownian motion (MBM). We refer to \cite{LevyVehelPeltier1995mBm} for the original definition and to \cite{StoevTaqqu2006HowRichMBM} for structural results and also to works that incorporate additional randomness into the local exponent, see e.g.\ \cite{AyacheEsserHamonier2018MPRE,loboda2021_regular_mfare_spa}. The idea of allowing the local regularity exponent to be random and possibly even time-dependent appears already in \cite{PapanicolaouSolna2003WaveletKolmogorov} and was developed more systematically in the framework of multifractional processes with random exponent in \cite{AyacheTaqqu2005MPRE}. For further developments on multifractional random-exponent models see e.g.\ \cite{AyacheBouly2022MovingAverageMPRE,AyacheBouly2023UniformStrongConsistent,Loosveldt2023MultifractionalHermite}. \\The FBMRE studied here can be viewed as a simpler random-exponent mechanism, namely a trajectory-wise randomization of the Hurst exponent, where the exponent is sampled once and then kept constant along each path. The concept of random parameters is related to the framework of so-called su\-per\-sta\-tis\-tics, see \cite{BeckCohen2003Superstatistics,Beck2006SuperstatisticalBrownianMotion}. 
Complementary FBM-based models with different random-parameter scenarios have recently been explored in \cite{HanKorabelEtAl2020eLife, KorabelEtAl2021EntropyEndosomes, BalcerekEtAl2023SwitchingFBM}, see also \cite{SabriXuKrapfWeiss2020PRL} for experimental motivation. Extensions to Riemann--Liouville type processes \cite{woszczek2025_rlFBMre_chaos} are also natural. Further random-parameter and time-change mechanisms have been studied in, e.g., \cite{WangCherstvyChechkinThapaSenoLiuMetzler2020JPA,WangCherstvyLiuMetzler2020PRE,CherstvySafdariMetzler2021JPhysD,CherstvyWangMetzlerSokolov2021PRE,WangMetzlerCherstvy2022PCCP,LiangWangMetzlerCherstvy2023PRE}.  Let us finally mention \cite{BiaginiHuOksendalZhang2008,Mishura2008fBM,Nourdin2012} as major references for FBM.

\paragraph{Outline.} This paper is structured as follows. Section~\ref{sec:smallbarrier} contains the proof of Theorem~\ref{thm:smallbarrier}. The lower bound in Theorem~\ref{Thm} is established in Section \ref{sec_LB}, while the upper bound is proved in Section \ref{sec_UB}.

%%%%%%%%%%%%%%%%%%%%%%%%%%%%%%%%%%%%%%%%%%%%%%%%%%%%%%%%%%
%%%%%%%%%%%%%%%%%%%%%%%%%%%%%%%%%%%%%%%%%%%%%%%%%%%%%%%%%%
\section{Proof of Theorem~\ref{thm:smallbarrier}} \label{sec:smallbarrier} 
The proof of Theorem~\ref{thm:smallbarrier} is based on the following lemma, which can be seen with the help of a simple application of Slepian's lemma. The statement can be found in \cite[Proposition 6]{SlepianZeug}.

\begin{Lemma}
    \label{lem:SlepianZeug}
    Let $0 < H \leq K < 1$ and let $(B_t^H)_{t\geq 0}$, respectively $(B_t^K)_{t\geq 0}$, be a fractional Brownian motion with Hurst exponent $H$, respectively $K$. Then for every $\varepsilon > 0$ we have
    \begin{align*}
    \p\left(\max_{t\in [0,1]}B_t^{H}\leq \varepsilon\right)\leq \p\left(\max_{t\in [0,1]}B_t^{K}\leq \varepsilon\right).
\end{align*}
\end{Lemma}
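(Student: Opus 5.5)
We apply Slepian's lemma after a deterministic time change that equalises the variances of the two processes. First observe that $\max_{t\in[0,1]}B_t^H\le\varepsilon$ is the same event as $\sup_{t\in(0,1]} B_t^H \le \varepsilon$. Since $B_0^H=0\le\varepsilon$, the point $t=0$ plays no role, and it suffices to compare the two processes on $(0,1]$.

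The key step is to reparametrise time so that both processes have the same variance. Set $u=t^{K}$, so that $t=u^{1/K}$, and define
\begin{align*}
X_u:=B^K_{u^{1/K}},\qquad Y_u:=B^H_{u^{1/H}},\qquad u\in(0,1].
\end{align*}
Then $\E X_u^2=\E Y_u^2=u^2$. The maps $u\mapsto u^{1/K}$ and $u\mapsto u^{1/H}$ are bijections of $(0,1]$, so
\begin{align*}
\{\max_{[0,1]}B^K\le\varepsilon\}=\{\sup_{u\in(0,1]}X_u\le\varepsilon\},
\end{align*}
and likewise for $Y$ and $B^H$.

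Slepian's lemma, in the form ``equal variances and $\E X_uX_v\ge \E Y_uY_v$ imply $\P(\sup X\le\varepsilon)\ge\P(\sup Y\le\varepsilon)$'', then reduces the claim to a covariance comparison: for $0<u<v\le1$ we need
\begin{align*}
\tfrac12\left(u^2+v^2-(v^{1/K}-u^{1/K})^{2K}\right)\ \ge\ \tfrac12\left(u^2+v^2-(v^{1/H}-u^{1/H})^{2H}\right).
\end{align*}
This is equivalent to $(v^{1/K}-u^{1/K})^{K}\le (v^{1/H}-u^{1/H})^{H}$. By homogeneity (divide by $v$, put $x=u/v\in(0,1)$), it becomes the statement that the function
\begin{align*}
p\mapsto g(p):=(1-x^{p})^{1/p},\qquad p=1/H\ge 1/K,
\end{align*}
is non-decreasing in $p$.

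The main, though elementary, obstacle is this monotonicity. I would prove it by writing $y=x^p\in(0,1)$ and $\ln x=c<0$, so that $\ln g=\tfrac{1}{p}\ln(1-e^{pc})$. Since $p>0$, we have $y=e^{pc}$ and $pc=\ln y$, and the function can be rewritten as
\begin{align*}
\ln g = c\,\frac{\ln(1-y)}{\ln y}.
\end{align*}
As $p$ increases, $y$ decreases. It therefore suffices to show that $\phi(y)=\ln(1-y)/\ln y$ is non-decreasing in $y$ on $(0,1)$: since $c<0$, $\ln g$ is then non-increasing in $y$, hence non-decreasing in $p$. Both numerator and denominator are negative, so $\phi>0$. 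Its monotonicity follows from the sign of
\begin{align*}
-\frac{\ln y}{1-y}-\frac{\ln(1-y)}{y},
\end{align*}
which is positive because $-y\ln y$ and $-(1-y)\ln(1-y)$ are both positive, after multiplying through by $y(1-y)$.

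Finally, there is a technical point. Slepian's lemma is usually stated for finitely many indices, so I would apply it on finite sets $\{u_1,\dots,u_n\}\subset(0,1]$. I would then pass to a countable dense set by monotone limits, using continuity of the paths to identify the supremum over the dense set with the supremum over $(0,1]$. Combining these steps gives the stated inequality.
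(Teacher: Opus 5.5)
Your proof is correct and takes the route the paper indicates. The paper gives no details beyond calling this a simple application of Slepian's lemma and citing \cite[Proposition 6]{SlepianZeug}. Your argument supplies exactly what that application needs: the time change $u=t^{H}$, which equalises both variances to $u^2$ without moving the barrier, the reduction to monotonicity of $p\mapsto(1-x^p)^{1/p}$, and the passage from finite index sets to $(0,1]$ via path continuity.
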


\begin{proof}[Proof of Theorem~\ref{thm:smallbarrier}]
Lemma~\ref{lem:SlepianZeug} states that $H\mapsto\p_B\left(\max_{t\in [0,1]}B_t^H\leq \varepsilon\right)$ is non-decreasing. Since $\mathcal{H}\leq H_0$ a.s., monotonicity immediately gives the upper bound 
\begin{align*}
    \E_\mathcal{H}\left[\p_B\left(\max_{t\in[0,1]}B_t^\mathcal{H} < \varepsilon\right)\right] &\leq \E_\mathcal{H}\left[\p_B\left(\max_{t\in[0,1]}B_t^{H_0} < \varepsilon\right)\right] %\notag\\
    = \p_B \left(\max_{t\in[0,1]}B_t^{H_0} < \varepsilon\right) %\notag\\
    = \varepsilon^{\frac{1-H_0}{H_0}+o(1)}, %\label{UB}
\end{align*}
for $\varepsilon\rightarrow 0$, where we used (\ref{eqn:molchaneps}). For the lower bound we restrict to Hurst exponents close to $H_0$ and then use monotonicity again. To do so, let $H_0 > \delta > 0$. Then
\begin{align}
    \E_\mathcal{H}\left[\p_B\left(\max_{t\in[0,1]}B_t^\mathcal{H} < \varepsilon\right)\right] &\geq \E_\mathcal{H}\left[\1_{\{\mathcal{H}\in [H_0-\delta, H_0]\}}\p_B\left(\max_{t\in[0,1]}B_t^\mathcal{H} < \varepsilon\right)\right]\notag\\
    & \geq c_\delta\,\p_B\left(\max_{t\in[0,1]}B_t^{H_0-\delta} < \varepsilon\right)\notag\\
    &= c_\delta\,\varepsilon^{\frac{1-(H_0-\delta)}{H_0-\delta}+o(1)},\label{eqn:fapunkt}
\end{align}
for $\varepsilon \rightarrow 0$, where $c_\delta := \p_\mathcal{H}(\mathcal{H}\in [H_0-\delta, H_0])>0$, where we used again (\ref{eqn:molchaneps}). Taking logarithms of (\ref{eqn:fapunkt}), dividing by $\ln \eps$, and letting first $\eps$ and then $\delta$ tend to zero gives the lower bound in the claim.
\end{proof}

\section{Proof of the Lower Bound in Theorem~\ref{Thm}}\label{sec_LB}

The goal of this section is to prove the lower bound in Theorem \ref{Thm}.
Here, we adapt Molchan’s approach \cite{molchan99}, originally developed to find a lower bound for the persistence probability of a fractional Brownian motion with fixed Hurst exponent. In the random-exponent setting, the main idea is to establish a lower bound uniformly for Hurst exponents close to the essential supremum $H_0$ and then average over $\mathcal H$. \\
The proof has three main ingredients. First, we revisit one of the auxiliary steps in Molchan's argument and make it uniform for \(H\) in intervals bounded away from zero. Second, we restrict the random Hurst exponent to values close to \(H_0\), since these values are expected to give the dominant contribution. Finally, we relate the resulting integral estimate to the persistence event by adapting Molchan's original argument.\\
Throughout this section, for $T\ge 0$ we use the shorthand notation
\begin{align*}
    M_T^H := \max_{t\in[0,T]} B_t^H, \qquad
    A_T^H := \max_{t\in[0,T]} |B_t^H|.
\end{align*}

%%%%%%%%%%%%%%%%%%%%%%%%%%%%%%%%%%%%%%%%%%%%%
%%%%%%%%%%%%%%%%%%%%%%%%%%%%%%%%%%%%%%%%%%%%%%%%%%%%%%%%%%
%%%%%%%%%%%%%%%%%%%%%%%%%%%%%%%%%%%%%%%%%%%%%%%%%%%%%%%%%%
\subsection{Preliminaries}\label{sec_Lemmas_LB}
%%%%%%%%%%%%%%%%%%%%%%%%%%%%%%%%%%%%%%%%%%%%%%%%%%%%%%%%%%

In this section, we collect several auxiliary results needed for the proof of the lower bound in Theorem~\ref{Thm}. Some of them are standard facts from the literature, while the remaining ones are proved here. Several of these lemmas will also be used later in the proof of the upper bound. Throughout this section, whenever fractional Brownian motion is involved, we assume a fixed (non-random) Hurst exponent. 

We begin with a brief summary of basic properties of Gaussian processes that will be used throughout the lower-bound argument. The first one is the so-called Borell-TIS inequality in the version from \cite[Theorem 2.5.8]{GineNickl2021}. 

\begin{Lemma}\label{Borel-TIS}
    Let $(X_t)_{t\in I}$ be a centered Gaussian process on the index set $I \subseteq \R$ with $M:=\sup_{t\in I}|X_t|$ a.s.\ finite and let $\sigma^2_I := \sup_{t\in I}\E\left[X_t^2\right]$. Then $\E\left[\sup_{t\in I}X_t\right] < \infty$ and $\sigma_I < \infty$. Moreover, for each $u \geq 0$,
    \begin{align*}
        \p\left(M -\E[M]> u \right) \leq e^{-\frac{u^2}{2\sigma_I^2}}\quad \text{and} \quad
    \p\left(M -\E[M]< -u \right) \leq e^{-\frac{u^2}{2\sigma_I^2}}.
    \end{align*}
\end{Lemma}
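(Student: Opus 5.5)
This is the standard Borell--TIS inequality, quoted here from \cite[Theorem 2.5.8]{GineNickl2021}. I would not reprove it from scratch; I would reduce it to the finite-dimensional Gaussian concentration inequality and then pass to the limit.

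\textbf{Step 1: finiteness.} The process is only assumed to have $M=\sup_{t\in I}|X_t|<\infty$ almost surely. To make this usable, first reduce to a countable index set. This is harmless in the separable setting, which is the one relevant here because FBM has continuous paths. Fix a median $m$ of $M$. Each $X_t$ is centred Gaussian and satisfies $\p(|X_t|\le m)\ge 1/2$, so $\E[X_t^2]\le m^2/\Phi^{-1}(3/4)^2$. This bound is uniform in $t$, hence $\sigma_I<\infty$. Finiteness of $\E[M]$ will follow from Step 2, since the tail bound around the median that we obtain there is integrable.

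\textbf{Step 2: finite-dimensional concentration.} Let $F=\{t_1,\dots,t_n\}\subseteq I$ be finite. Write $(X_{t_i})_i=A g$, where $g$ is standard Gaussian in $\R^n$ and $A$ is a square root of the covariance matrix. The map
\begin{align*}
f(g)=\max_i |(Ag)_i|
\end{align*}
is Lipschitz with constant $\max_i \|A_{i\cdot}\|_2=\sigma_F\le\sigma_I$. The Gaussian concentration inequality for Lipschitz functions (for instance, the Tsirelson--Ibragimov--Sudakov inequality via the Ornstein--Uhlenbeck semigroup or the Gaussian isoperimetric inequality) then gives
\begin{align*}
\p\left(f-\E f>u\right)\le e^{-u^2/(2\sigma_F^2)}\quad\text{and}\quad \p\left(f-\E f<-u\right)\le e^{-u^2/(2\sigma_F^2)}.
\end{align*}
The isoperimetric route gives the same bound around the median.

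\textbf{Step 3: limit passage.} Take finite sets $F_n\uparrow D$, where $D$ is a countable dense set, so that $M_{F_n}\uparrow M$. The median version of the bound is uniform in $n$, and the medians $m_n$ stay bounded by $m$. This yields $\sup_n\E[M_{F_n}]<\infty$, and by monotone convergence $\E[M_{F_n}]\to\E[M]<\infty$. Since $\sigma_{F_n}\le\sigma_I$, the two-sided bounds hold for each $n$ with $\sigma_I$ in the exponent. They pass to the limit because $M_{F_n}-\E[M_{F_n}]\to M-\E[M]$ almost surely.

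\textbf{Main obstacle.} The delicate point is Step 3: showing that $\E[M]<\infty$ using only almost sure finiteness of $M$. The median argument above handles this. All other steps are routine applications of the finite-dimensional inequality.
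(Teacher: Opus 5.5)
Your proposal is correct and matches the paper's treatment. The paper gives no proof of its own and simply quotes Gin\'e--Nickl, Theorem~2.5.8, whose standard argument is the scheme you sketch: Gaussian concentration for the $\sigma_F$-Lipschitz map $g\mapsto\max_i|(Ag)_i|$, then a monotone limit over finite sets exhausting a countable index set, with $\sigma_I<\infty$ and $\E[M]<\infty$ obtained from the almost sure finiteness of $M$. Your caveat about first reducing to a countable (separable) index set is apt: the cited result is formulated in that setting, and the lemma as written for arbitrary $I\subseteq\R$ genuinely needs it, which is harmless for the continuous FBM paths where the paper applies it.
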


Throughout, we use the following tail-based characterization of subgaussian random variables.
A random variable \(X\) is called subgaussian with parameter \(K>0\) if
\begin{align}
    \p\left(X\geq t\right)\leq e^{-\frac{t^2}{2K^2}}
    \quad \text{and} \quad
    \p\left(X\leq -t \right)\leq e^{-\frac{t^2}{2K^2}},
    \qquad \text{for all } t \geq 0 .
    \label{sub}
\end{align}
For centered random variables, this notion is equivalent to the usual
moment-generating-function characterization in the sense that if one
characterization holds with parameter \(K\), then the other holds with parameter
\(CK\) for some universal constant \(C>0\); see
\cite[Lemmas 1.3 and 1.5]{RigolletHuetter2023}.
In particular, the following direction \cite[Lemma 1.5]{RigolletHuetter2023} will be used repeatedly.

\begin{Lemma}\label{lem:subg-mgf}
Let $X$ be a centered subgaussian random variable with parameter $K$, i.e.\ $X$ satisfies (\ref{sub}) and $\E[X]=0$. Then, for any $\theta>0$,
\[
\E\big[e^{\theta X}\big]\le \exp\!\Big(4\theta^2K^2\Big).
\]
\end{Lemma}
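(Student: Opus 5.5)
We prove Lemma~\ref{lem:subg-mgf} through the Taylor expansion of the exponential, controlling the moments of $X$ with the tail bound (\ref{sub}). Since $\E[X]=0$, the linear term vanishes, and we obtain
\[
\E\big[e^{\theta X}\big] = 1 + \sum_{k\ge 2}\frac{\theta^k\E[X^k]}{k!} \le 1+\sum_{k\ge 2}\frac{\theta^k\E[|X|^k]}{k!}.
\]
Interchanging sum and expectation is justified by Fubini, since $\E[e^{\theta|X|}]<\infty$; this follows from the Gaussian tails in (\ref{sub}).

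The first step is the moment bound. By the layer-cake formula and (\ref{sub}),
\[
\E|X|^k=\int_0^\infty k t^{k-1}\,\p(|X|>t)\,dt\le 2k\int_0^\infty t^{k-1}e^{-t^2/(2K^2)}\,dt .
\]
The substitution $u=t^2/(2K^2)$ turns this into $k(2K^2)^{k/2}\Gamma(k/2)$.

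The second step inserts this into the series. This gives
\[
\E[e^{\theta X}]\le 1+\sum_{k\ge2}\frac{(\theta K\sqrt2)^k\,k\,\Gamma(k/2)}{k!}.
\]
We split the sum into even and odd indices.
\begin{itemize}
\item For even $k=2m$, we have $k\Gamma(m)/k! = 2\,m!/(2m)!\cdot\frac1m\cdot$, so the term is $2(2\theta^2K^2)^m\, m!/(2m)!\le 2(2\theta^2K^2)^m/(2^m m!)$, using $(2m)!\ge 2^m (m!)^2$. Summing over $m\ge1$ gives a contribution of at most $2(e^{\theta^2K^2}-1)$.
\item For odd $k=2m+1$, we use $\Gamma(m+1/2)\le\Gamma(m+1)$ for $m\ge1$ together with $2\sqrt{ab}\le a+b$, i.e.\ $|x|^{2m+1}\le \tfrac12(|x|^{2m}+|x|^{2m+2})$ applied inside the expectation. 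This bounds the odd terms by a constant multiple of neighbouring even terms.
\end{itemize}
Altogether, $\E[e^{\theta X}]\le 1+C\sum_{m\ge1}(c\,\theta^2K^2)^m/m!$ for explicit absolute constants $c$ and $C$.

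The third and main step is the bookkeeping of these constants. We must show that the resulting bound is at most $e^{4\theta^2K^2}$, i.e.\ that
\[
1+C\sum_{m\ge 1}\frac{(c\,\theta^2K^2)^m}{m!}\le \sum_{m\ge0}\frac{(4\theta^2K^2)^m}{m!}.
\]
It suffices to compare term by term, namely $C c^m\le 4^m$ for all $m\ge1$. The small-$m$ terms are the delicate ones, especially $m=1$: the $k=2$ contribution is $\theta^2\E[X^2]/2\le 2\theta^2K^2$ (since $\E X^2\le 4K^2$), and it must sit below $4\theta^2K^2$. The odd $k=3$ term must be absorbed into the $m=1$ and $m=2$ terms without exceeding the available slack.

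If the constants turn out too tight for this term-by-term comparison, we fall back on the symmetrization route from \cite{RigolletHuetter2023}. This bounds $\E[e^{\theta X}]\le 1+\sum_{k\ge 2}\dots$ by $1+4\theta^2K^2+\dots$ with a cruder comparison $\Gamma(k/2)\le (k/2)^{k/2}$ and $k!\ge (k/e)^k$, which again yields $e^{4\theta^2K^2}$. Since the statement is exactly \cite[Lemma 1.5]{RigolletHuetter2023}, we ultimately cite that proof for the precise constant.
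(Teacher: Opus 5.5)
The paper gives no proof of this lemma; it only cites \cite[Lemma 1.5]{RigolletHuetter2023}. The proof there is the Taylor-expansion argument you set up: the layer-cake bound $\E|X|^k\le k(2K^2)^{k/2}\Gamma(k/2)$, then a split into even and odd $k$. So your route is the right one, but your write-up has a gap at the end.

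The step you yourself call the main one is never carried out, and the devices you propose for it do not work as stated:
\begin{itemize}
\item The inequality $|x|^{2m+1}\le\frac12(|x|^{2m}+|x|^{2m+2})$, applied to $|X|$, mismatches the powers of $\theta$ and $K$. Applied to $\theta|X|$, it produces $\theta^{2m+2}\E|X|^{2m+2}/(2(2m+1)!)$. That is $m+1$ times the neighbouring even term, not a constant multiple.
\item Your fallback with $\Gamma(k/2)\le(k/2)^{k/2}$ and $k!\ge(k/e)^k$ fails already at $k=2$. There it gives the coefficient $e^2>4$ in front of $\theta^2K^2$, so for small $\theta K$ it cannot yield $e^{4\theta^2K^2}$.
\end{itemize}
Ending with ``we cite that proof for the constant'' leaves you exactly where the paper is.

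The missing step takes a few lines and needs no term-by-term comparison. Put $y:=\theta^2K^2$.
\begin{itemize}
\item Even terms: your estimate gives $2y^m/m!$ for $k=2m$, so the even part is at most $2(e^y-1)$.
\item Odd terms: for $k=2m+1$ with $m\ge1$, the term is at most $(\sqrt{2y})^{2m+1}\Gamma(m+\frac12)/(2m)!$. Using only $\Gamma(m+\frac12)\le m!$ and $(2m)!\ge 2^m(m!)^2$, this is at most $\sqrt{2y}\,y^m/m!$. So the odd part is at most $\sqrt{2y}\,(e^y-1)$.
\end{itemize}
Hence
\[
\E\big[e^{\theta X}\big]\le 1+\big(2+\sqrt{2y}\big)\big(e^y-1\big)\le 1+(4+6y)\big(e^y-1\big)\le e^{4y}.
\]
The middle step uses $\sqrt{2y}\le\frac12+y$. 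The last step uses $e^{4y}-1=(e^y-1)(1+e^y+e^{2y}+e^{3y})\ge(e^y-1)(4+6y)$. This gives $\exp(4\theta^2K^2)$, as claimed.

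The odd terms carry a prefactor $\sqrt{2y}$ rather than an absolute constant. That is why they do not fit your planned form $1+C\sum_m(c\,y)^m/m!$ without extra work, and why the factorization of $e^{4y}-1$ is the simplest way to absorb them.
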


We will further use the following important estimate for the expected maximum of fractional Brownian motion repeatedly in the sequel. It is stated as Theorem 1 in \cite{BMNZ2017}.

\begin{Lemma}\label{Max_bound}
    Let $(B_t^H)_{t\geq 0}$ be a fractional Brownian motion with Hurst exponent $H$. Then
    \begin{align*}
        \frac{1}{2\sqrt{H\pi e\ln(2)}} \leq \E \left[\max_{t \in [0,1]}B^H_t\right]< \frac{16.3}{\sqrt{H}}.
    \end{align*}
\end{Lemma}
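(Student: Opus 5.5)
The two inequalities are proved separately. For the lower bound we compare with i.i.d.\ Gaussians via Sudakov--Fernique on a finite grid of times. For the upper bound we use Dudley's entropy integral with respect to the canonical metric of $B^H$. Throughout we use the canonical (intrinsic) metric
\[
d_H(s,t) := \big(\E[(B^H_t-B^H_s)^2]\big)^{1/2} = |t-s|^H,
\]
which follows from stationarity of increments and the covariance formula.

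\textbf{Lower bound.} Fix $N\ge 2$ and the grid $t_k := k/N$, $k=1,\dots,N$. Then $\max_{t\in[0,1]}B_t^H \ge \max_k B^H_{t_k}$, and for $i\neq j$ we have $d_H(t_i,t_j)\ge N^{-H}$. Let $\xi_1,\dots,\xi_N$ be i.i.d.\ $\mathcal N(0,1)$ and set $Y_k := N^{-H}\xi_k/\sqrt2$. Then
\[
\E[(Y_i-Y_j)^2] = N^{-2H} \le \E[(B^H_{t_i}-B^H_{t_j})^2],
\]
so the Sudakov--Fernique inequality gives
\[
\E\Big[\max_k B^H_{t_k}\Big] \ge \frac{N^{-H}}{\sqrt2}\,\E\Big[\max_{k\le N}\xi_k\Big].
\]
Next we use the classical lower bound $\E[\max_{k\le N}\xi_k]\ge \sqrt{\ln N/(\pi\ln 2)}$. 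This can be proved, for instance, for powers of two by an inductive/Gaussian tail argument, and then extended by monotonicity in $N$. It yields
\[
\E\Big[\max_{t\in[0,1]}B_t^H\Big] \ge \frac{1}{\sqrt{2\pi\ln2}}\, N^{-H}\sqrt{\ln N}.
\]
The function $x\mapsto e^{-Hx}\sqrt x$ with $x=\ln N$ is maximised at $x = 1/(2H)$, where it equals $e^{-1/2}(2H)^{-1/2}$. Plugging in gives exactly
\[
\frac{1}{\sqrt{2\pi\ln 2}}\cdot\frac{1}{\sqrt{2eH}} = \frac{1}{2\sqrt{H\pi e\ln 2}}.
\]
The delicate point is that $N$ must be an integer $\ge2$. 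Either one checks that rounding $e^{1/(2H)}$ loses nothing, or one uses a non-equidistant set of $N$ points with spacing $1/N$ inside $[0,1]$, for which the bound remains valid for real $N$. Handling this technicality, together with getting the sharp i.i.d.\ constant $1/\sqrt{\pi\ln 2}$, is where I expect most of the care to be needed.

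\textbf{Upper bound.} The covering number of $([0,1],d_H)$ at scale $\varepsilon$ satisfies $N(\varepsilon)\le \lceil \tfrac12\varepsilon^{-1/H}\rceil\le \varepsilon^{-1/H}$ for $\varepsilon\le 1$, and $N(\varepsilon)=1$ for $\varepsilon\ge 1$, since the diameter is $1$. Dudley's bound in the form $\E\sup_t X_t\le C_D\int_0^{\infty}\sqrt{\ln N(\varepsilon)}\,d\varepsilon$, with an explicit constant (e.g.\ $C_D=12$ or a chaining-derived variant), then gives
\[
\E\Big[\max_{t\in[0,1]} B_t^H\Big] \le C_D\, H^{-1/2}\int_0^1\sqrt{\ln(1/\varepsilon)}\,d\varepsilon = C_D\,\frac{\sqrt{\pi}}{2}\,H^{-1/2}.
\]
To land below $16.3$ one runs an explicit dyadic chaining with well-chosen geometric scales, rather than using the crude Dudley constant, and tracks the constants. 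This constant bookkeeping is routine but tedious. The $H^{-1/2}$ dependence, which is all that is used later, is immediate from the entropy integral.
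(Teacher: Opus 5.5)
\textbf{Comparison with the paper.} The paper does not prove this lemma; it quotes Theorem~1 of \cite{BMNZ2017}. Your self-contained route is sound: Sudakov--Fernique against i.i.d.\ Gaussians on a grid for the lower bound, and Dudley's entropy integral in the canonical metric $d_H(s,t)=|t-s|^H$ for the upper bound. Optimising over a \emph{real} grid size gives exactly the stated lower constant.

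\textbf{Upper bound.} This part is already complete, and you are more pessimistic than necessary. With $C_D=12$ your own computation gives $\E[\max_{t\in[0,1]}B^H_t]\le 12\cdot\frac{\sqrt{\pi}}{2}H^{-1/2}=6\sqrt{\pi}\,H^{-1/2}\approx 10.64\,H^{-1/2}<16.3\,H^{-1/2}$. So no refined chaining or constant bookkeeping is needed; any Dudley constant below about $18.39$ works. Later in the paper only the two-sided order $c/\sqrt{H}\le \E[M_1^H]\le C/\sqrt{H}$ is used, so the precise constants matter only for the lemma as literally stated.

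\textbf{Lower bound: the rounding step.} For the literal statement, the lower bound still has a gap that you flagged but did not close, and your first proposed repair is false. With the grid $\{k/N\}_{k=1}^N$ and integer $N$ you only obtain $\frac{1}{\sqrt{2\pi\ln 2}}\max_{N\ge 2}N^{-H}\sqrt{\ln N}$. Since $x\mapsto x^{-H}\sqrt{\ln x}$ has a unique strict maximum at $x=e^{1/(2H)}$, this is strictly below the claimed constant whenever $e^{1/(2H)}\notin\N$. For example, at $H=1/2$ one gets $3^{-1/2}\sqrt{\ln 3}\approx 0.605<e^{-1/2}\approx 0.607$, so rounding does lose something.

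\textbf{The fix.} Use the time point $0$. For real $x\ge 1$, the set $\{j/x:\ j=0,\dots,\lfloor x\rfloor\}\subset[0,1]$ has $\lfloor x\rfloor+1>x$ points at mutual $d_H$-distance at least $x^{-H}$. The degenerate coordinate $B^H_0=0$ is harmless in Sudakov--Fernique. Comparison with $x^{-H}\xi_j/\sqrt{2}$ then gives
\[
\E\Big[\max_{t\in[0,1]}B^H_t\Big]\ge\frac{x^{-H}}{\sqrt{2}}\sqrt{\frac{\ln(\lfloor x\rfloor+1)}{\pi\ln 2}}\ge\frac{x^{-H}}{\sqrt{2}}\sqrt{\frac{\ln x}{\pi\ln 2}}
\]
for every real $x\ge 1$. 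Taking $x=e^{1/(2H)}$ yields exactly $\frac{1}{2\sqrt{H\pi e\ln 2}}$.

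\textbf{The i.i.d.\ bound.} The fix needs $\E[\max_{k\le n}\xi_k]\ge\sqrt{\ln n/(\pi\ln 2)}$ for \emph{every} integer $n\ge 2$. Your suggested derivation (prove it for powers of two, then use monotonicity in $n$) is not enough: for $2^k\le n<2^{k+1}$ it only gives $\sqrt{k/\pi}$, which again loses the constant. You need to prove, or cite, the inequality for all $n$. It is tight at $n=2$, where $\E[\max(\xi_1,\xi_2)]=1/\sqrt{\pi}$.
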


We now apply Lemma \ref{lem:subg-mgf} to the maximum of the absolute value of an FBM on $[0,1]$.
Using that this maximum is a subgaussian random variable and combining this with the estimate on its mean from Lemma \ref{Max_bound}, we obtain an explicit bound on its exponential moments. This is stated in the next lemma.

\begin{Lemma} \label{Lem:e_hoch_max}
     Let $(B_t^H)_{t \geq 0}$ be an FBM with Hurst exponent $H$. We have for all $\theta > 0$ that
    \begin{align*}
        \E\left[e^{\theta A_1^H}\right]
        \le 2\,\exp\!\left(\frac{16.3\,\theta}{\sqrt{H}} + 4 \theta^2\right). % \label{Lem:e_hoch_max:1}
    \end{align*}
\end{Lemma}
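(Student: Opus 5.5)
We will treat $A_1^H=\max_{t\in[0,1]}|B_t^H|$ as the maximum of the centered Gaussian process $(|B_t^H|)$, or more precisely of $(\pm B_t^H)_{t\in[0,1]}$. Borell--TIS (Lemma~\ref{Borel-TIS}) applies to this process with $\sigma^2=\sup_{t\le1}\E[(B_t^H)^2]=1$. So $A_1^H-\E[A_1^H]$ satisfies the two-sided tail bound (\ref{sub}) with parameter $K=1$. Since it is centered, Lemma~\ref{lem:subg-mgf} gives
\[
\E\bigl[e^{\theta(A_1^H-\E[A_1^H])}\bigr]\le \exp(4\theta^2),
\]
and therefore $\E[e^{\theta A_1^H}]\le e^{\theta\E[A_1^H]}\exp(4\theta^2)$.

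It then remains to bound $\E[A_1^H]$. Lemma~\ref{Max_bound} only controls $\E[M_1^H]$, so we need to pass from the one-sided maximum to the absolute one. The natural route is
\[
A_1^H=\max(M_1^H,\max_{t\le1}(-B_t^H)) \le M_1^H+\max_{t\le1}(-B_t^H).
\]
Both terms are nonnegative because $B_0^H=0$, and by symmetry they have the same law. This gives $\E[A_1^H]\le 2\cdot 16.3/\sqrt H$, which is off by a factor $2$ in the exponent.

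To get the stated constant $2$ in front of the exponential, with $16.3\theta/\sqrt H$ in the exponent, we will instead work at the level of the exponential directly:
\[
e^{\theta A_1^H}=\max\bigl(e^{\theta M_1^H},e^{\theta\max(-B^H)}\bigr)\le e^{\theta M_1^H}+e^{\theta\max_{t\le1}(-B_t^H)}.
\]
Taking expectations and using symmetry yields $\E[e^{\theta A_1^H}]\le 2\E[e^{\theta M_1^H}]$. Now we apply the argument of the first paragraph to $M_1^H$ rather than $A_1^H$. Since $M_1^H=\sup_{t\in[0,1]}B_t^H$ is a.s.\ finite, Borell--TIS gives the subgaussian tails of $M_1^H-\E[M_1^H]$ with parameter $1$. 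Lemma~\ref{lem:subg-mgf} then gives $\E[e^{\theta(M_1^H-\E M_1^H)}]\le e^{4\theta^2}$, and Lemma~\ref{Max_bound} gives $\E[M_1^H]<16.3/\sqrt H$. Combining the three bounds yields the claim.

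The only delicate point is applying Lemma~\ref{Borel-TIS} to the one-sided supremum. That lemma is phrased with $M=\sup|X_t|$, whereas here we need the version for $\sup_t X_t$. Finiteness is clear by continuity of paths, and the Gin\'e--Nickl version covers $\sup_t X_t$ over a separable index set with $\sigma^2=\sup_t \E[X_t^2]=1$. If one insists on the stated form, we would instead apply it to the process indexed by $[0,1]\times\{\pm1\}$ and accept the weaker constant. We expect the exponential splitting in the third paragraph to sidestep this issue cleanly.
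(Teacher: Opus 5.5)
Your final argument is correct and is exactly the paper's proof. You bound $e^{\theta A_1^H}\le e^{\theta M_1^H}+e^{\theta\max_{t\le 1}(-B_t^H)}$, use symmetry, get subgaussian tails for $M_1^H-\E[M_1^H]$ with parameter $1$ from Borell--TIS, and then apply Lemma~\ref{lem:subg-mgf} and Lemma~\ref{Max_bound}. One correction to your closing remark: the splitting does not sidestep the one-sided issue; it is precisely the step that needs Borell--TIS for $\sup_t B_t^H$ rather than $\sup_t |B_t^H|$, and that is indeed the form the Gin\'e--Nickl theorem provides, as you note.
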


\begin{proof}
The proof follows from the inequality $e^{\theta A_1^H} \leq e^{\theta M_1^H} + e^{\theta \max (-B^H_t)}$, the fact that $\max_{t\in [0,1]} (-B^H_t)$ has the same distribution as $M_1^H$, the equality $M_1^H=M_1^H-\E M_1^H+\E M_1^H$, and the fact that $X:=M_1^H-\E M_1^H$ is subgaussian with parameter $1$, by Borell-TIS (Lemma~\ref{Borel-TIS}), so that we can apply Lemma~\ref{lem:subg-mgf} (to $X$) and Lemma~\ref{Max_bound} (to $\E M_1^H$).
\end{proof}

\subsection{A Uniform Version of Statement 1 in \texorpdfstring{\cite{molchan99}}{Molchan (1999)}}
Our next goal is to establish a more general variant of Statement~1 in Molchan \cite{molchan99}, which will be a key ingredient in the proof of the lower bound in Theorem~\ref{Thm}. Throughout this section, whenever we discuss fractional Brownian motion, the Hurst exponent is again kept fixed (i.e.\ non-random).

We will use an auxiliary tool, namely a slightly less general version of Lemma~1.1 in \cite{GarsiaRodemichRumsey1971}, commonly referred to as the Garsia-Rodemich-Rumsey inequality (GRR).

\begin{Lemma}\label{lem:GRR}
Let $\Psi$ be a continuous, strictly increasing function on $[0,\infty)$ such that
$\Psi(0)=0$ and $\lim_{t\to\infty}\Psi(t)=\infty$ and let $\phi: [0,1]\rightarrow [0,1]$ be a continuous and non-decreasing function with $\phi(0)=0$.
Let $f\in C([0,1],\mathbb{R}^d)$. If there is a constant $C>0$ such that
\begin{align}
\int_0^1\!\!\int_0^1
\Psi\!\left(\frac{|f(t)-f(s)|}{\phi(|t-s|)}\right)\,\dd s\,\dd t \le C, \label{2.1.1}
\end{align}
then for all $0\le s\le t\le 1$,
\begin{align}
|f(t)-f(s)|
\le 8 \int_0^{|t-s|}
\Psi^{-1}\!\left(\frac{4C}{u^2}\right)\, \dd \phi(u). \label{2.1.2}
\end{align}
\end{Lemma}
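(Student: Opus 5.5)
The plan is the classical Garsia–Rodemich–Rumsey argument: first show that the continuous modulus $\omega(r):=\int_0^r \Psi^{-1}(4C/u^2)\,\dd\phi(u)$ controls increments between suitably chosen points, and then transfer this to arbitrary $s,t$ by continuity of $f$. By translating and rescaling it is enough to treat the increment $|f(1)-f(0)|$ on an arbitrary subinterval. For a subinterval $[a,b]\subseteq[0,1]$ with $d=b-a$, the change of variables maps the double integral over $[a,b]^2$ to one over $[0,1]^2$ multiplied by $d^2$. This is exactly where the factor $u^{-2}$ comes from, so it suffices to prove the bound for the pair $(0,1)$ in a localized form that keeps track of the interval. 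Concretely, I will set
\[
I(t):=\int_0^1 \Psi\!\left(\frac{|f(t)-f(s)|}{\phi(|t-s|)}\right)\dd s,
\]
so that $\int_0^1 I(t)\,\dd t\le C$ by (\ref{2.1.1}).

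The core step is the chaining construction. Pick $t_0\in(0,1)$ with $I(t_0)\le C$; such a point exists by averaging. Then define decreasing sequences $d_n$ and points $t_n$ by $d_{n}:=\phi^{-1}$-type choices, namely $\phi(d_n)=\tfrac12\phi(d_{n-1})$ (take $d_n$ the left endpoint of the level set, using monotonicity and continuity of $\phi$). Next choose $t_n\in[0,d_{n-1}]$ such that both
\[
I(t_n)\le \frac{4C}{d_{n-1}}
\quad\text{and}\quad
\Psi\!\left(\frac{|f(t_n)-f(t_{n-1})|}{\phi(|t_n-t_{n-1}|)}\right)\le \frac{4 I(t_{n-1})}{d_{n-1}}.
\]
Such a $t_n$ exists because, by Markov's inequality on $[0,d_{n-1}]$, each condition fails on a set of measure less than $d_{n-1}/2$.

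From these choices, and using $|t_n-t_{n-1}|\le d_{n-1}$, I get $|f(t_n)-f(t_{n-1})|\le \phi(d_{n-1})\Psi^{-1}(16C/d_{n-1}d_{n-2})$. The halving of $\phi$ then gives
\[
\phi(d_{n-1})=2\bigl(\phi(d_{n-1})-\phi(d_n)\bigr)\le 4\bigl(\phi(d_n)-\phi(d_{n+1})\bigr),
\]
so each term is dominated by $\int_{d_{n+1}}^{d_n}\Psi^{-1}(4C/u^2)\,\dd\phi(u)$, up to the constant, by monotonicity of $\Psi^{-1}$. Summing the telescoping series, with $t_n\to0$ and $f$ continuous, yields $|f(t_0)-f(0)|\le 4\omega(1)$. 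The symmetric argument gives the same bound for $|f(1)-f(t_0)|$, and the two together give the constant $8$.

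The main obstacle is the bookkeeping in this step. The constants $4C/u^2$ must come out exactly (rather than $16C$) after the subinterval rescaling, and $\phi$ is only non-decreasing rather than strictly increasing, which has to be handled when defining $d_n$. If the constants do not match, I will fall back on citing \cite[Lemma 1.1]{GarsiaRodemichRumsey1971} directly. The present statement is the special case with $\Psi,\phi$ as given, a unit-interval domain, and the $\mathbb R^d$-valued version following verbatim with the Euclidean norm.
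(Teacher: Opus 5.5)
The paper does not prove this lemma. It quotes it as a special case of Lemma~1.1 of Garsia, Rodemich and Rumsey, which is exactly your fallback. Your self-contained attempt follows the classical chaining argument, and the reduction to $|f(1)-f(0)|$ by rescaling is fine: it replaces $C$ by $C/(t-s)^2$ and $\phi$ by $\phi((t-s)\,\cdot)$ exactly. However, two steps of the chain fail as written.

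First, you define $d_n$ from $d_{n-1}$ alone and only know $t_{n-1}\in[0,d_{n-2}]$. So the bound $|t_n-t_{n-1}|\le d_{n-1}$ is false; you only get $|t_n-t_{n-1}|\le d_{n-2}$, i.e.\ the factor $\phi(d_{n-2})=2\phi(d_{n-1})$.

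Second, your thresholds $4C/d_{n-1}$ and $4I(t_{n-1})/d_{n-1}$ give control by $\Psi^{-1}\bigl(16C/(d_{n-1}d_{n-2})\bigr)$. Comparing this with $\Psi^{-1}(4C/u^2)$ on $[d_{n+1},d_n]$ by monotonicity requires $4d_n^2\le d_{n-1}d_{n-2}$. Because the $d_n$ halve $\phi$ rather than $u$, this fails when $\phi$ is steep at $0$. For $\phi(u)=u^\alpha$ one has $d_n=2^{-1/\alpha}d_{n-1}$, and the condition holds only for $\alpha\le 3/2$. For a general non-convex $\Psi$, the factor inside $\Psi^{-1}$ cannot be moved outside as a constant either. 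The subinterval rescaling cannot fix this, since the rescaled problem has exactly the same structure. The $16C$ comes from your Markov thresholds, not from the rescaling.

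The repair is to use the thresholds $2C/d_{n-1}$ and $2I(t_{n-1})/d_{n-1}$. Your own Markov argument (each bad set in $[0,d_{n-1}]$ has measure $<d_{n-1}/2$) is exactly what justifies them, and they give $\Psi(\cdot)\le 4C/(d_{n-1}d_{n-2})\le 4C/d_{n-1}^2$. Put $d_{-1}:=1$ (note $I(t_0)\le C\le 2C/d_{-1}$) and $\phi(d_0)=\tfrac12\phi(1)$. Then charge the $n$-th increment to $[d_n,d_{n-1}]$ via $\phi(d_{n-2})=4\bigl(\phi(d_{n-1})-\phi(d_n)\bigr)$. This gives $|f(t_n)-f(t_{n-1})|\le 4\int_{d_n}^{d_{n-1}}\Psi^{-1}(4C/u^2)\,\dd\phi(u)$, hence $|f(t_0)-f(0)|\le 4\int_0^1\Psi^{-1}(4C/u^2)\,\dd\phi(u)$.

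Alternatively, use the original coupling $\phi(d_n)=\tfrac12\phi(t_n)$. It forces $t_{n+1}<d_n\le t_n$ and thus $\phi(|t_{n+1}-t_n|)\le 2\phi(d_n)\le 4\bigl(\phi(d_n)-\phi(d_{n+1})\bigr)$.

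Also note that $t_n\to 0$ needs $d_n\to 0$, i.e.\ $\phi>0$ on $(0,1]$. If $\phi$ vanishes on some $[0,\eta]$, then (\ref{2.1.1}), read with $x/0=\infty$ for $x>0$, forces $f$ to be constant and the claim is trivial.

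Finally, keeping your factor-$4$ thresholds but charging to $[d_n,d_{n-1}]$ only yields the inequality with $16C$ in place of $4C$. That would still serve the paper's application with $\Psi(u)=u^q$, where it merely changes $C_{q,\alpha}$, but it is not the lemma as stated.
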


The next lemma concerns an auxiliary step in Molchan's argument. More precisely, we study the large-$T$ behavior of
\[
\E\left[\frac{\int_0^1 B_u^H e^{T^H B_u^H}\,\dd u}{\int_0^1 e^{T^H B_u^H}\,\dd u}\right].
\]
For fixed \(H\), this quantity converges to \(\E[M_1^H]\), since the exponential weight increasingly concentrates near the points where \(B^H\) attains its maximum on \([0,1]\). In our setting, however, it is not enough to know this convergence for each fixed \(H\). Since we later average over the random Hurst exponent, we need the error term to vanish uniformly for \(H\in[H_1,H_0]\), where \(H_1>0\) is fixed. The following lemma provides precisely this uniform version. 

\begin{Lemma}\label{Lem:Conv_Speed}
     Let $H \in [H_1,H_0]\cap (0,1)$. Let furthermore $(B_t^H)_{t\geq 0}$ be a fractional Brownian motion. Then there exists a function $g:\R_{\geq 0}\times [H_1,H_0]\cap(0,1)\rightarrow\R$ such that 
    \begin{align*}
        \E\left[\frac{\int_0^1B_u^He^{T^HB_u^H}\dd u}{\int_0^1e^{T^HB_u^H}\dd u}\right]  = \E\left[M_1^H\right] + g(T,H),
    \end{align*}
    with
    \begin{align*}
        &\sup_{H\in [H_1,H_0]\cap (0,1)}\left| g(T,H)\right| \rightarrow 0, \qquad \text{as $T \rightarrow \infty$}.
        % \\&\sup_{H\in [H_1,H_0)}\left| g(T,H)\right| \rightarrow 0 \quad \text{as $T \rightarrow \infty$, for $H_0 = 1$.}
    \end{align*}
\end{Lemma}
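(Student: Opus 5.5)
Throughout, write $\lambda := T^H$. Note that $T^H\ge T^{H_1}\to\infty$ uniformly in $H\in[H_1,H_0]$. We define $g(T,H)$ as the difference of the two expectations, so the whole task is to bound it uniformly. The ratio is an average of $B_u^H$ under the Gibbs probability measure $\mu_\lambda(\dd u)\propto e^{\lambda B_u^H}\dd u$ on $[0,1]$. It is therefore bounded above by $M_1^H$, so $g\le 0$. We only need a uniform lower bound of the form $\E[M_1^H - \langle B^H\rangle_{\mu_\lambda}]\le \eta(T)$ with $\eta(T)\to 0$.

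Fix $\delta>0$ and split $\E[M_1^H-\langle B\rangle_{\mu_\lambda}] \le \delta + \E[(M_1^H-\langle B\rangle_{\mu_\lambda})\1_{\{\ldots\}}]$. The key quantity is $\mu_\lambda(\{u: B_u^H < M_1^H-\delta\})$. This leads to the bound
\[
M_1^H-\langle B\rangle_{\mu_\lambda}\le \delta + 2A_1^H\,\mu_\lambda\big(\{B_u< M_1^H-\delta\}\big).
\]
The numerator of this mass is at most $e^{\lambda(M_1^H-\delta)}$. The denominator is at least $e^{\lambda(M_1^H-\delta/2)}\cdot|\{u: B_u > M_1^H-\delta/2\}|$. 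The Lebesgue measure of this last set is controlled from below by continuity: if $|B_t-B_s|\le L|t-s|^{\gamma}$, then the set contains an interval of length $\min\{1,(\delta/(2L))^{1/\gamma}\}$ around the argmax. Hence
\[
\mu_\lambda(\{B<M-\delta\})\le e^{-\lambda\delta/2}\,\max\{1,(2L/\delta)^{1/\gamma}\}.
\]

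To get a Hölder constant $L$ whose moments are uniform in $H\in[H_1,H_0]$, I apply the GRR inequality (Lemma~\ref{lem:GRR}). I take $\Psi(x)=e^{x^2/4}-1$ (or $e^{|x|}-1$) and $\phi(u)=u^{H_1/2}$. The increments satisfy $B_t-B_s\sim\mathcal N(0,|t-s|^{2H})$ and $|t-s|^{H}/\phi(|t-s|)\le 1$, so the expectation of the double integral $C$ in (\ref{2.1.1}) is bounded by a constant independent of $H$. GRR (\ref{2.1.2}) then gives $L\le c\,(1+\log(1+C))^{1/2}$ with $\gamma=H_1/2$ (up to log factors, which can be absorbed by using $\phi(u)=u^{H_1/4}$ and $\gamma=H_1/8$, say). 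Consequently $\E[L^p]$ is bounded uniformly in $H$ for each $p$. Similarly, $\E[(A_1^H)^2]$ is bounded uniformly by Lemma~\ref{Lem:e_hoch_max}, since $\sqrt H\ge\sqrt{H_1}$.

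Combining these estimates with Cauchy–Schwarz gives
\[
|g(T,H)|\le \delta + 2\,e^{-T^{H_1}\delta/2}\,\E[(A_1^H)^2]^{1/2}\,\E\big[\max\{1,(2L/\delta)^{2/\gamma}\}\big]^{1/2}.
\]
Taking the supremum over $H$, then letting $T\to\infty$ and then $\delta\to0$, finishes the proof. The main obstacle is the uniform Hölder control via GRR, in particular checking that the moments of $C$ (and hence of $L$) do not blow up as $H$ varies over $[H_1,H_0]$. Near $H_0$ close to $1$ nothing degenerates, because $\phi$ is chosen with exponent below $H_1$.
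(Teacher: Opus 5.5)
Your proposal is correct and follows essentially the same route as the paper: the split at the near-maximum set giving the bound $\delta + 2A_1^H e^{-\delta T^H/2}/\lambda(I_{\delta/2}^H)$, the lower bound on $\lambda(I_{\delta/2}^H)$ via a H\"older constant around the argmax, Cauchy--Schwarz with Lemma~\ref{Lem:e_hoch_max}, and GRR to control the moments of the H\"older constant uniformly in $H$. The only difference is in how GRR is instantiated. You take an exponential $\Psi$ with a fixed $\phi(u)=u^{H_1/2}$, which gives an $H$-independent exponent $\gamma$ and uniformly bounded moments of every order. The paper instead uses $\Psi(u)=u^{8/H}$, $\phi(u)=u^{H/2}$ and $\gamma=H/4$, and then bounds the resulting explicit Gamma-function constant by its value at $H_1$.
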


\begin{proof}
    Fix $H\in[H_1,H_0]\cap (0,1)$ and fix $\delta>0$ (chosen later depending on $\varepsilon$).

\noindent\textit{Step 1: Splitting the expectation and isolating the contribution near the maximizer.}\\
    We define the set of points where the process is within distance $\delta > 0$ of its maximum on $[0,1]$ as
    \[
        I_\delta^H := \left\{ t \in [0,1] : M_1^H - B_t^H \leq \delta \right\}.
    \]
    We note that:
    \begin{enumerate}
        \item On $I_\delta^H$: $B_t^H \geq M_1^H - \delta$, hence $e^{T^HB_t^H} \geq e^{T^H(M_1^H - \delta)}$.
        \item On $(I_\delta^H)^c$: $B_t^H \leq M_1^H - \delta$, hence $e^{T^HB_t^H} \leq e^{T^H(M_1^H - \delta)}$.
    \end{enumerate}

    The main estimate is as follows:
    \begin{align*}
         \Bigg|\frac{\int_0^1 B_u^He^{T^HB_u^H}\dd u}{\int_0^1e^{T^HB_u^H}\dd u}- M_1^H\Bigg|
         &=  \frac{\int_0^1(M_1^H - B_u^H)e^{T^HB_u^H}\dd u}{\int_0^1e^{T^HB_u^H}\dd u} \\
         &\leq \frac{\int_{I_\delta^H}(M_1^H - B_u^H)e^{T^HB_u^H}\dd u}{\int_{I_\delta^H}e^{T^HB_u^H}\dd u}
             +  \frac{\int_{(I_\delta^H)^c}(M_1^H - B_u^H)e^{T^HB_u^H}\dd u}{\int_{I_{\delta/2}^H}e^{T^HB_u^H}\dd u} \\
         &\leq \delta +  2\,A_1^H\,\frac{e^{-\frac{\delta T^H}{2}}}{\lambda(I_{\delta/2}^H)},
    \end{align*}
    where we used $(a)$ and $(b)$ in the last step and where $\lambda$ denotes the Lebesgue measure. Taking expectations and applying the Cauchy-Schwarz inequality yields
    \begin{align}
         &\E\left[\bigg|\frac{\int_0^1B_u^He^{T^HB_u^H}\dd u}{\int_0^1e^{T^HB_u^H}\dd u}- M_1^H\bigg|\right] % \notag \\
         % &\quad
         \leq\delta
         + \left(\E\big[(2A_1^H)^2\big]\right)^{1/2}
           \left(\E\left[\lambda(I_{\delta/2}^H)^{-2}\right]\right)^{1/2}
           \,e^{-\frac{\delta T^H}{2}}.\label{Lem:Conv_Speed_1}
    \end{align}
    Using $x \leq e^x$ for $x \in \R$, Lemma~\ref{Lem:e_hoch_max} as well as the fact that for any $a, b, c > 0$ there exists some $c_0 > 0$ such that $a e^{\frac{b}{\sqrt{H}}+c}\leq e^{\frac{c_0}{\sqrt{H}}}$, we can estimate the first expectation of the r.h.s.\ of (\ref{Lem:Conv_Speed_1}) as follows
    \begin{align}
         \left(\E\big[(2A_1^H)^2\big]\right)^{1/2} \leq \E\left[e^{4A_1^H}\right]^{1/2} \leq e^{\frac{c_0}{\sqrt{H}}}\leq e^{ \frac{c_0}{\sqrt{H_1}}}\label{Lem:Conv_Speed_2},
    \end{align}
    where $c_0 > 0$ is a constant that does not depend on $H$ or $T$.  Our goal in the remainder of the proof is to establish a suitable upper bound for $\E\left[\lambda(I_{\delta/2}^H)^{-2}\right]$ which will allow us to continue the estimate in \eqref{Lem:Conv_Speed_1}.
    
\noindent\textit{Step 2: A lower bound on the Lebesgue measure of $I_{\delta/2}$.}\\
    For any $\gamma < H$, the Hölder constant
    \[
        C_{\gamma}(H):= \sup_{\substack{s,t\in[0,1]\\ s\neq t}}\frac{|B_t^H-B_s^H|}{|t-s|^\gamma}
    \]
    is $\p$-almost surely finite, see \cite[Section 1.16]{Mishura2008fBM}. Let $\tau$ be a (measurable) point where the maximum $M_1^H$ is attained.  
    If $|t-\tau|\leq r$, then $M_1^H-B_t^H \leq  C_\gamma(H) r^\gamma$. Set
    \[
        r := \left( \frac{\delta}{2\,C_\gamma(H)}\right)^{1/\gamma}.
    \]
    Then $t\in [\tau-r, \tau+r]\cap [0,1]$ implies that 
    \begin{align*}
    M_1^H - B_t^H \leq C_\gamma(H) r^\gamma = \frac{\delta}{2}, \qquad \text{$\p$-a.s.,}
       %  |M_1^H - B_t^H| = |B_\tau^H - B_t^H| \leq C_\gamma(H)|t-\tau|^{\gamma} \leq C_\gamma(H)r^\gamma \leq \delta/2 \quad \text{$\p$-a.s.,}
    \end{align*}
    i.e.\ $t \in I_{\delta/2}^H$ $\p$-a.s. Since we have that
    \begin{align*}
        \lambda([\tau-r,\tau+r]\cap [0,1]) \geq \min(1, r),
    \end{align*}
    we obtain the following bound on the Lebesgue measure of $I_{\delta/2}^H$:
    \begin{align}
        \lambda(I_{\delta/2}^H) &\geq \min\left(1,\left(\frac{\delta}{2\,C_\gamma(H)}\right)^{1/\gamma}\right),\notag
    \end{align}
    so that
    \begin{align}
        \lambda(I_{\delta/2}^H)^{-2} &\leq \max\left(1,\ 2^{2/\gamma} C_\gamma(H)^{2/\gamma} \delta^{-2/\gamma}\right).  \label{Lem:Conv_Speed_3}  
    \end{align}
    
\noindent\textit{Step 3: Controlling the moment of $C_\gamma(H)$.}\\
    Our next goal is to control the expectation of $C_\gamma(H)^{2/\gamma}$. To this end, we apply the Garsia-Rodemich-Rumsey inequality (Lemma \ref{lem:GRR}) pathwise with $f(t):= B_t^H$,
\(\Psi(u)=u^q\) and \(\phi(u)=u^\alpha\) for parameters $\alpha = \alpha(H),q = q(H) > 0$ that depend on $H$ and are going to be chosen later. Using the stationary increments of FBM, we have
    \[
        \E\big[|B_t^H-B_s^H|^q\big] = K_q |t-s|^{qH}, \qquad \text{for all $s, t \in [0,1]$},
    \]
    where $K_q := 2^{q/2}\frac{\Gamma(\frac{q+1}{2})}{\sqrt{\pi}}$.
    With that, we obtain for $H>\alpha$
    \begin{align}
        \E\left[\int_0^1\int_0^1 \frac{|B_t^H - B_s^H|^q}{|t-s|^{\alpha q}}\dd s \dd t \right]&=K_q\int_0^1\int_0^1\frac{|t-s|^{Hq}}{|t-s|^{\alpha q}}\dd s \dd t \leq K_q, 
        \label{Lem:Conv_Speed_4}
    \end{align}
    This implies that the random variable $U_{q,\alpha}(H):= \int_0^1\int_0^1 \frac{|B_t^H-B_s^H|^q}{|t-s|^{\alpha q}}\dd s \dd t$ is a.s.\ finite. Applying the GRR Lemma (Lemma~\ref{lem:GRR}) pathwise yields:
\[
|B_t^H(\omega)-B_s^H(\omega)|
\le 8\!\int_0^{|t-s|}
\Big(\tfrac{4\,U_{q,\alpha}(H)(\omega)}{u^2}\Big)^{\!1/q}\, \dd \!\big(u^\alpha\big).
\]
Since $\dd (u^\alpha)=\alpha u^{\alpha-1}\dd u$, we obtain the control
\[
|B_t^H-B_s^H|
\le C_{q,\alpha}\,U_{q,\alpha}(H)^{1/q}\,|t-s|^{\,\gamma}
\quad\text{a.s.\ for all }s,t\in[0,1],
\]
with exponent $\gamma:=\alpha-\frac{2}{q}>0$ (where $\alpha, q$ are such that $\gamma < H$) and constant
\[
C_{q,\alpha}:=\frac{8\,\alpha\,4^{1/q}}{\alpha-\frac{2}{q}}.
\]
Equivalently,
\begin{align*}
C_\gamma(H):=\sup_{s\neq t}\frac{|B_t^H-B_s^H|}{|t-s|^\gamma}
\;\le\; C_{q,\alpha}\,U_{q,\alpha}(H)^{1/q}
\qquad\text{a.s.}
%\label{Lem0.7:1}
\end{align*}
We set $q = 2/\gamma$, so that
\begin{align}
\mathbb E\big[C_\gamma(H)^{2/\gamma}\big]
\le C_{q,\alpha}^{\,q}\,\mathbb E\big[U_{q,\alpha}(H)^{q/q}\big]
= C_{q,\alpha}^{\,q}\,\mathbb E[U_{q,\alpha}(H)].
\label{Lem0.7:2}
\end{align}
%\medskip

\noindent\textit{Step 4: Choosing the remaining parameters to obtain a uniform moment bound.}
\\ We now choose parameters so that the Hölder exponent $\gamma$ takes a simple form. Set $\alpha = H/2$ and $\gamma = H/4$, which gives $q = 8/H$ (this satisfies the conditions from above: $H-\alpha>0$, $\gamma=\alpha-2/q>0$, $\gamma<H$). 
Substituting these values into (\ref{Lem0.7:2})  and using the estimate (\ref{Lem:Conv_Speed_4}) for $\mathbb E [U_{q,\alpha}(H)]$ yields
\begin{align}
\mathbb E\big[C_\gamma(H)^{2/\gamma}\big]
\;&\le\; C_{q,\alpha}^{\,q}\, K_q
\leq  4(16\sqrt{2})^{\frac{8}{H}} \Gamma\left(\frac{\frac{8}{H}+1}{2}\right) 
\leq  4(16\sqrt{2})^{\frac{8}{H_1}} \Gamma\left(\frac{\frac{8}{H_1}+1}{2}\right)=: c_1, \label{Lem:Conv_Speed_5}
\end{align}
and $c_1$ only depends on $H_1$ but not on $H$.

\textit{Step 5: Conclusion.}\\
By combining the estimates (\ref{Lem:Conv_Speed_1}), (\ref{Lem:Conv_Speed_2}), (\ref{Lem:Conv_Speed_3}) and (\ref{Lem:Conv_Speed_5}) from Steps 1--4, using that $\gamma = \alpha-\frac{2}{q}=H/4$, and applying Minkowski's inequality, we obtain
\begin{align*}
    \E\left[\bigg|\frac{\int_0^1B_u^He^{T^HB_u^H}\dd u}{\int_0^1e^{T^HB_u^H}\dd u}- M_1^H\bigg|\right] % \\
         &\quad\leq \delta
         + e^{\frac{c_0}{\sqrt{H_1}}}
          \left(\E\left[ \max\left(1, 2^{2/\gamma}C_\gamma(H)^{2/\gamma}\delta^{-2/\gamma}\right)\right]\right)^{1/2} e^{-\frac{\delta T^H}{2}}\\*
           &\quad \leq \delta + e^{\frac{c_0}{\sqrt{H_1}}}
          \left( 1+ 2^{1/\gamma}\left(\E\left[C_\gamma(H)^{2/\gamma}\right]\right)^{1/2} \delta^{-1/\gamma}\right) e^{-\frac{\delta T^H}{2}}\\*
           &\quad \leq  \delta
         + e^{\frac{c_0}{\sqrt{H_1}}}
           \left(1+ 2^{4/H}\sqrt{c_1}\delta^{-4/H}\right)e^{-\frac{\delta T^H}{2}}\\*
           &\quad \leq \delta + c_2e^{-\frac{\delta T^{H_1}}{2}} + c_3\delta^{-4/H_1}e^{-\frac{\delta T^{H_1}}{2}},
\end{align*}
where $c_2, c_3>0$ do not depend on $H$, only on $H_1$.
We note that this upper bound is valid for $\delta < 1$ and every $H\in [H_1, H_0]\cap (0,1)$ and it does not depend on $H$ (only on $H_1$).
By choosing $\delta :=  \min(\varepsilon/3,1)$ and $T_0$ such that $c_2e^{-\frac{\delta T^{H_1}}{2}}\leq \varepsilon/3$ and $\delta^{-4/H_1}e^{-\frac{\delta T^{H_1}}{2}}\leq \varepsilon/3$ for every $T \geq T_0$, the last expression becomes $\leq\varepsilon$.
\end{proof}

With Lemma~\ref{Lem:Conv_Speed} at hand, we can now establish a more general variant of Statement~1 in \cite{molchan99}. Namely, the next lemma gives us an asymptotic expansion for
\(
\E\big[\big(\int_0^T e^{B_s^H}\,\dd s\big)^{-1}\big]
\),
with an error term that vanishes uniformly in $H\in[H_1,H_0]\cap (0,1)$.

\begin{Lemma} \label{Lem:Statement1}
     Let $(B_t^H)_{t \ge 0}$ be a fractional Brownian motion with Hurst exponent $H\in[H_1,H_0]\cap(0,1)$. There exists a function $g: \R_{\geq 0}\times [H_1,H_0]\cap (0,1) \rightarrow \R$ such that
    \begin{align*}
         \E\left[\left(\int_0^T e^{B_s^H}\dd s\right)^{-1}\right]  
         = \left(\E\left[M_1^H\right] + g(T,H)\right)HT^{H-1} + T^{-1},
    \end{align*}
    where $\sup_{H\in [H_1,H_0]\cap (0,1)}|g(T,H)| \rightarrow 0$, as $T\rightarrow \infty$. 
\end{Lemma}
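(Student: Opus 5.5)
The identity follows Molchan's Statement~1: we differentiate the expectation with respect to $T$ and use self-similarity, so that the derivative turns into exactly the expectation controlled by Lemma~\ref{Lem:Conv_Speed}.

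Set $F(T):=\E[(\int_0^T e^{B_s^H}\dd s)^{-1}]$. Substituting $s=Tu$ and using $(B^H_{Tu})_{u}\deq (T^H B^H_u)_u$ gives
\[
F(T)=T^{-1}\,\E\big[J(T)^{-1}\big],\qquad J(T):=\int_0^1 e^{T^H B_u^H}\dd u .
\]
Put $G(T):=\E[J(T)^{-1}]=T F(T)$. Then $G(T)\to\infty$ typically, and $G(0)=1$. We differentiate under the expectation:
\[
\frac{\dd}{\dd T}J(T)^{-1}=-HT^{H-1}\,\frac{\int_0^1 B_u^H e^{T^HB_u^H}\dd u}{J(T)^2}.
\]
This is awkward, so we work instead with the quantity Molchan uses, namely $\frac{\dd}{\dd T}\big(T F(T)\big)$ computed in the original time scale. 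Since $\int_0^T e^{B_s}\dd s$ is differentiable in $T$ with derivative $e^{B_T}$, we get
\[
\frac{\dd}{\dd T}\Big(\int_0^T e^{B_s}\dd s\Big)^{-1}=-e^{B_T}\Big(\int_0^T e^{B_s}\dd s\Big)^{-2}.
\]
This is still not the target, so the cleaner route is the following. We write $\ln\int_0^T e^{B_s}\dd s$ and use the identity
\[
\E\Big[\ln \int_0^T e^{B_s}\dd s\Big]=\ln T+\E[\ln J(T)],
\]
whose $T$-derivative equals
\[
T^{-1}+HT^{H-1}\,\E\Big[\frac{\int_0^1 B_u e^{T^HB_u}\dd u}{J(T)}\Big].
\]
By time reversal and stationarity of increments (the process $s\mapsto B_{T-s}-B_T$ is again an FBM), the same derivative equals
\[
\E\Big[\frac{e^{B_T}}{\int_0^T e^{B_s}\dd s}\Big]=\E\Big[\Big(\int_0^T e^{B_T-B_{T-s}}\dd s\Big)^{-1}\cdot\Big]=\E\Big[\Big(\int_0^T e^{\tilde B_s}\dd s\Big)^{-1}\Big]=F(T),
\]
where $\tilde B_s:=B_T-B_{T-s}$ is an FBM. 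Comparing the two expressions gives
\[
F(T)=T^{-1}+HT^{H-1}\,\E\Big[\frac{\int_0^1B_ue^{T^HB_u}\dd u}{\int_0^1 e^{T^HB_u}\dd u}\Big],
\]
and Lemma~\ref{Lem:Conv_Speed} replaces the last expectation by $\E[M_1^H]+g(T,H)$ with $g$ vanishing uniformly in $H\in[H_1,H_0]\cap(0,1)$. This yields the claim with the same $g$.

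The remaining work is justifying the differentiation of $\E[\ln\int_0^T e^{B_s}\dd s]$ under the expectation. The identity should really be read as an integral identity: we will verify that $\E[\ln \int_0^T e^{B}]$ is finite and absolutely continuous with the stated derivative. In the unscaled form the derivative is $e^{B_T}/\int_0^Te^{B_s}\dd s$. For $T$ in a compact set $[a,b]\subset(0,\infty)$ this is dominated by
\[
e^{2\max_{[0,b]}|B|}/a,
\]
which is integrable by Lemma~\ref{Lem:e_hoch_max} and self-similarity. Moreover $|\ln\int_0^Te^{B}|\le |\ln T|+\max_{[0,T]}|B|$ is integrable. So dominated convergence (or Fubini applied to $\ln\int_0^b-\ln\int_0^a=\int_a^b e^{B_T}/\int_0^Te^{B}\,\dd T$) gives the derivative formula. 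The scaled derivative is handled the same way, with $|B_u|e^{T^HB_u}/J\le A_1^H$ as the dominating function. We expect this integrability step to be the main point requiring care, though it is routine. The uniformity in $H$ is entirely inherited from Lemma~\ref{Lem:Conv_Speed}.
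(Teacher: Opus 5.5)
Your proposal is correct and follows the paper's proof essentially step for step. You use time reversal to identify $\E\bigl[(\int_0^T e^{B_s^H}\,\dd s)^{-1}\bigr]=\E\bigl[e^{B_T^H}/\int_0^T e^{B_s^H}\,\dd s\bigr]$ with the $T$-derivative of $\E\bigl[\ln\int_0^T e^{B_s^H}\,\dd s\bigr]=\ln T+\E\bigl[\ln\int_0^1 e^{T^HB_u^H}\,\dd u\bigr]$, justify differentiating under the expectation by dominated convergence with an $e^{2\max|B^H|}$-type majorant, and then apply Lemma~\ref{Lem:Conv_Speed}, exactly as the paper does. The only slip is that the reversed integrand should be $e^{B_{T-s}^H-B_T^H}$ rather than $e^{B_T^H-B_{T-s}^H}$, which is harmless since $-B^H$ is again an FBM.
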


\begin{proof}
    We proceed as in the proof of Statement 1 of \cite{molchan99}. Using the stationarity of increments, i.e.\ $\left(B_s^H\right)_{s \geq 0} \stackrel{d}{=} \left(B_{T-s}^H - B_T^H\right)_{s\geq 0}$,
    one has
    \begin{align*}
        I & := \E\left[\left(\int_0^T e^{B_s^H}\dd s\right)^{-1}\right]
          = \E\left[\frac{e^{B_T^H}}{\int_0^T e^{B_s^H}\dd s}\right]
           = \E\left[D\varphi(T,\omega)\right].
    \end{align*}
    Here, we set $\varphi(T, \omega) := \ln\left(\int_0^T e^{B_s^H}\dd s\right)$ and denote $D := \frac{\dd}{\dd T}$.  
    We can bound $D\varphi$ uniformly on any interval $[0,T]$ for $T \geq 1$, since
    \begin{align*}
        \frac{e^{B_T^H}}{\int_0^T e^{B_s^H}\dd s} \leq \frac{e^{\max_{t\in [0,T]}B_t^H}}{Te^{\min_{t\in [0,T]}B_t^H}}\leq e^{2\max_{t\in[0,T]}|B_t^H|} \stackrel{d}{=} e^{2T^H\max_{t\in [0,1]}|B_t^H|},
    \end{align*}
    where we used the self-similarity of the FBM in the last step, which shows that $e^{2\max_{t\in[0,T]}|B_t^H|}$ is an integrable majorant, by Lemma \ref{Lem:e_hoch_max}.
    Hence, the operations $\E$ and $D$ are interchangeable.
    Using the self-similarity of $B^H$, we obtain
    \begin{align*}
        I &= D\E\left[\varphi(T,\omega)\right]
          = D \E\left[\ln\left(\int_0^1 e^{T^H B_u^H}\dd u\right) + \ln(T)\right].
    \end{align*}
    Again, $D$ and $\E$ can be interchanged, similar to the justification above. This gives
    \begin{align*}
        I = \E\left[\frac{\int_0^1 B_u^H e^{T^H B_u^H}\dd u}{\int_0^1 e^{T^H B_u^H}\dd u}\right] \cdot DT^H + T^{-1},
    \end{align*}
which holds for all $H\in(0,1)$ and all $T>0$. Inserting Lemma \ref{Lem:Conv_Speed} yields the claim.
\end{proof}

%%%%%%%%%%%%%%%%%%%%%%%%%%%%%%%%%%%%%%%%%%
%%%%%%%%%%%%%%%%%%%%%%%%%%%%%%%%%%%%%%%%%%%
%%%%%%%%%%%%%%%%%%%%%%%%%%%%%%%%%%%%%%%%%%%
\subsection{Lower Bound}\label{sec_proof_LB}
In this section, we return to the random-exponent setting and prove the lower bound in Theorem~\ref{Thm}. 
The key input is the fixed-parameter asymptotic expansion from Lemma~\ref{Lem:Statement1}, which holds uniformly for $H$ in subsets of $(0,1)$ that are bounded away from $0$. 
We first show that Hurst exponents close to the essential supremum $H_0$ already yield the correct order for the lower bound. 
More precisely, we restrict to the event $\{\mathcal H\in[H_0-\varepsilon,H_0]\}$ and combine Lemma~\ref{Lem:Statement1} with the previous auxiliary estimates to obtain Lemma~\ref{Lem:H-Expectation}. 
Finally, we use Lemma~\ref{Lem:H-Expectation} to conclude the desired lower bound in Theorem~\ref{Thm}.

\begin{Lemma}\label{Lem:H-Expectation}
 Let $(B_t^\mathcal{H})_{t\geq 0}$ be an FBMRE and let $\varepsilon > 0$ be such that $H_0 > \varepsilon$.
Then 
\[
    \mathbb{E}_\mathcal{H}\Bigg[
      \1_{\{\mathcal{H} \geq H_0-\varepsilon\}}\,
      \mathbb{E}_B\Bigg[\Bigg(\int_0^T e^{B_s^\mathcal{H}}\,\dd s\Bigg)^{-1}\Bigg]
    \Bigg]
    \;\ge\;
    T^{H_0-1+o(1)}.
\]
\end{Lemma}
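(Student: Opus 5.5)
We plan to apply Lemma~\ref{Lem:Statement1} with $H_1:=H_0-\varepsilon>0$. On the event $\{\mathcal H\ge H_0-\varepsilon\}$ we have $\mathcal H\in[H_1,H_0]\cap(0,1)$ almost surely, because $\mathcal H\le H_0$ a.s.\ and $\mathcal H<1$. For every such $H$ Lemma~\ref{Lem:Statement1} gives
\begin{align*}
\E_B\left[\left(\int_0^T e^{B_s^H}\,\dd s\right)^{-1}\right] \;\ge\; \left(\E\left[M_1^H\right]+g(T,H)\right)HT^{H-1},
\end{align*}
where we simply drop the nonnegative term $T^{-1}$. By Lemma~\ref{Max_bound},
\begin{align*}
\E\left[M_1^H\right]\ge \frac{1}{2\sqrt{H\pi e\ln 2}}\ge \frac{1}{2\sqrt{\pi e\ln 2}}=:c_*>0,
\end{align*}
uniformly in $H$. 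Since $\sup_{H\in[H_1,H_0]\cap(0,1)}|g(T,H)|\to0$, there is $T_0$ (depending only on $\varepsilon$ and $H_0$) such that $|g(T,H)|\le c_*/2$ for all $T\ge T_0$ and all $H$ in this range. Hence, for $T\ge T_0$ and such $H$, the inner expectation is at least $\tfrac{c_*}{2}\,H_1\,T^{H-1}$.

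Next we average over $\mathcal H$. The map $H\mapsto T^{H-1}$ is increasing for $T\ge1$, so on any subevent $\{\mathcal H\ge H_0-\eta\}$ with $0<\eta\le\varepsilon$ we get the lower bound $T^{H_0-\eta-1}$. Therefore
\begin{align*}
\E_\mathcal H\left[\1_{\{\mathcal H\ge H_0-\varepsilon\}}\,\E_B\left[\left(\int_0^T e^{B_s^{\mathcal H}}\,\dd s\right)^{-1}\right]\right] \;\ge\; \frac{c_*H_1}{2}\,\p_\mathcal H(\mathcal H\ge H_0-\eta)\,T^{H_0-1-\eta}.
\end{align*}
The probability $\p_\mathcal H(\mathcal H\ge H_0-\eta)$ is strictly positive by the definition of the essential supremum. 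Taking logarithms and dividing by $\ln T$ yields
\begin{align*}
\liminf_{T\to\infty}\frac{\ln(\text{l.h.s.})}{\ln T}\ge H_0-1-\eta.
\end{align*}
Since $\eta\in(0,\varepsilon]$ is arbitrary, the liminf is at least $H_0-1$, which is exactly the claimed bound $T^{H_0-1+o(1)}$ (this is only a lower bound, so it suffices to control the liminf).

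There is one measurability point. We must check that $H\mapsto\E_B[(\int_0^Te^{B_s^H}\,\dd s)^{-1}]$ is measurable, so that plugging in $\mathcal H$ makes sense. This holds under the joint measurability implicit in the definition of FBMRE, and in any case only the pointwise lower bound valid for each $H$ is used.

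The main obstacle is the uniformity in $H$ of the error term, because a merely pointwise $o(1)$ would not survive averaging over $\mathcal H$. This has already been handled by Lemmas~\ref{Lem:Conv_Speed} and~\ref{Lem:Statement1}. What remains delicate is to ensure a positive lower bound for $\E[M_1^H]+g(T,H)$ uniformly in $H$, which the lower estimate in Lemma~\ref{Max_bound} provides. This lower estimate must be used rather than the upper one.
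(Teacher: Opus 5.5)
Your proof is correct and follows essentially the paper's argument. Both use the lower bound of Lemma~\ref{Max_bound} together with the uniform control of $g$ from Lemma~\ref{Lem:Statement1} to make $\E[M_1^H]+g(T,H)$ uniformly positive for large $T$, then restrict to $\{\mathcal H\ge H_0-\eta\}$ and let $T\to\infty$ and then $\eta\to0$; the only cosmetic differences are that you apply the uniform lemma once on $[H_0-\varepsilon,H_0]$ (the paper uses $[H_0-\delta,H_0]$) and bound $\E[M_1^H]$ from below via $H<1$ instead of $H\le H_0$.
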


\begin{proof}
    Let $\delta > 0$ such that $\varepsilon >  \delta$. Then
    \begin{align}
        \E_\mathcal{H}&\left[\1_{\{\mathcal{H} \geq H_0-\eps\}}\E_B\left[\left(\int_0^T e^{B_s^\mathcal{H}}\dd s\right)^{-1}\right]\right] \geq \E_\mathcal{H}\left[\1_{\{\mathcal{H} \geq H_0-\delta\}}\E_B\left[\left(\int_0^T e^{B_s^\mathcal{H}}\dd s\right)^{-1}\right]\right]. \label{eqn:ldf01}
    \end{align}
      By Lemma \ref{Max_bound} and Lemma \ref{Lem:Statement1}, we can further estimate the last term by
    \begin{align}
         & \E_\mathcal{H}\left[\1_{\{\mathcal{H} \geq H_0-\delta\}}\left(\left(\E_B[M_1^\mathcal{H}]+g(T, \mathcal{H}) \right)\mathcal{H}T^{\mathcal{H}-1}+T^{-1}\right)\right]  \notag  \\
          &= \E_\mathcal{H}\left[\1_{\{\mathcal{H} \geq H_0-\delta\}}\left(\E_B[M_1^\mathcal{H}]+g(T, \mathcal{H}) \right)\mathcal{H}T^{\mathcal{H}-1}\right] + \p_\mathcal{H}([H_0-\delta, H_0])\, T^{-1} \notag  \\
          &\geq \E_\mathcal{H}\left[\1_{\{\mathcal{H} \geq H_0-\delta\}}\left(\frac{c_0}{\sqrt{H_0}}-\sup_{H\in [H_0-\delta, H_0]}|g(T, H)| \right)\mathcal{H}T^{\mathcal{H}-1}\right], \label{eqn:ldf02}
    \end{align}
    for some $c_0 > 0$ that does not depend on $T$ nor on $H$. Because of Lemma \ref{Lem:Statement1}, we know that for large enough $T$
    \begin{align*}
        \sup_{H\in [H_0-\delta, H_0]}|g(T, H)| \leq \frac{c_0}{2\sqrt{H_0}},
    \end{align*}
    so that we can estimate (combining (\ref{eqn:ldf01}) and (\ref{eqn:ldf02}))
    \begin{align*}
       \E_\mathcal{H}\left[\1_{\{\mathcal{H} \geq H_0-\varepsilon\}}\E_B\left[\left(\int_0^T e^{B_s^\mathcal{H}}\dd s\right)^{-1}\right]\right]
        &\geq \frac{c_0}{2\sqrt{H_0}}\,\E_\mathcal{H}\left[\1_{\{\mathcal{H} \geq H_0-\delta\}}\mathcal{H}T^{\mathcal{H}-1}\right]
        \\
        &\geq \frac{c_0}{2\sqrt{H_0}}\ \P_\mathcal{H}( [H_0-\delta,H_0])\,(H_0-\delta) T^{H_0-\delta-1}.
        \end{align*}
        Taking logarithms, dividing by $\ln T$, and letting first $T\to\infty$ and then $\delta\to 0$ shows the claim.
\end{proof}

%%%%%%%%%%%%%%%%%%%%%%%%%%%%%%%%%%%%%%%%%%%%%%%%%%%%%%%%%%
%%%%%%%%%%%%%%%%%%%%%%%%%%%%%%%%%%%%%%%%%%%%%%%%%%%%%%%%%%
%%%%%%%%%%%%%%%%%%%%%%%%%%%%%%%%%%%%%%%%%%%%%%%%%%%%%%%%%%

With the previous lemmas at hand, we are finally ready to prove the lower bound in Theorem \ref{Thm}.

\begin{proof}[Proof of the lower bound in Theorem~\ref{Thm}] 
We proceed similarly to the proof of Lemma~1 in \cite{molchan99}. Let $H_0 > \varepsilon > 0$. By Lemma~\ref{Lem:H-Expectation},
\begin{align}
T^{H_0-1+o(1)} \leq \E_\mathcal{H}\!\left[\1_{\{\mathcal{H} \geq H_0-\varepsilon\}} \E_B\!\left[\left(\int_0^T e^{B_s^\mathcal{H}} \dd s\right)^{-1}\right]\right] 
=: I_1 + I_2, 
\label{13}
\end{align}
where
\begin{align*}
I_1 &:= \E_\mathcal{H}\!\left[f_\varepsilon(\mathcal{H})\, \E_B\!\left[(\cdot)^{-1} \1_{\{M_T^\mathcal{H} < a_T\}}\right]\right], \qquad
I_2 := \E_\mathcal{H}\!\left[f_\varepsilon(\mathcal{H})\, \E_B\!\left[(\cdot)^{-1} \1_{\{M_T^\mathcal{H} \geq a_T\}}\right]\right],
\end{align*}
 with $a_T$ depending on $T$ and to be defined later, and $f_\varepsilon(\mathcal{H}) := \1_{\{\mathcal{H} \geq H_0-\varepsilon\}}$. 
The role of the following decomposition is to compare the inverse functional with the persistence event. The term \(I_1\) corresponds to the event where the maximum stays below a suitable barrier; on this event, the functional can be controlled directly and yields the desired persistence probability. The term \(I_2\) corresponds to the complementary event where the maximum exceeds this barrier and will be shown to be negligible.
 \\[0.5em]
\textit{Step 1: Estimation of $I_1$.} Define
\[
\xi^\mathcal{H}_T := \left(\int_0^T e^{B_s^\mathcal{H}} \dd s\right)^{-1},
\]
so that
\[
I_1 \leq \E_\mathcal{H}\!\left[f_\varepsilon(\mathcal{H}) \E_B\left[\xi^\mathcal{H}_1 \1_{\{M_T^\mathcal{H} < a_T\}}\right]\right].
\]
We have $\xi^\mathcal{H}_1 \leq \exp(\sqrt{8c_0 \ln(T)})$ on the set
\[
\Omega^\mathcal{H} := \left\{\omega : \min_{t \in [0,1]} B_t^\mathcal{H} > -\sqrt{8c_0\ln(T)} \right\},
\]
where $c_0>0$ is a constant that is to be chosen later.
Therefore,
\begin{align}
I_1 \leq \E_\mathcal{H}\!\left[f_\varepsilon(\mathcal{H}) \p_B(M_T^\mathcal{H} < a_T) e^{\sqrt{8c_0 \ln(T)}}\right] 
+ \E_\mathcal{H}\left[f_\varepsilon(\mathcal{H}) \E_B[\xi^\mathcal{H}_1 \1_{(\Omega^\mathcal{H})^c}]\right]. \label{punkt}
\end{align}
By Cauchy-Schwarz and the symmetry of FBM,
\begin{align*}
\E_\mathcal{H}&[\E_B[f_\varepsilon(\mathcal{H}) \xi^H_1 \1_{(\Omega^\mathcal{H})^c}]]\\
&\leq \left(\E_\mathcal{H}\!\left[f_\varepsilon(\mathcal{H}) \p_B((\Omega^\mathcal{H})^c)\right]\right)^{1/2} 
\cdot \left(\E_\mathcal{H}\!\left[ \E_B[(\xi^\mathcal{H}_1)^2]\right]  \right)^{1/2}\\
&\leq \left(\E_\mathcal{H}\!\left[f_\varepsilon(\mathcal{H}) \p_B\!\left(M_1^\mathcal{H} > \sqrt{8c_0 \ln(T)}\right)\right] \right)^{1/2}
\cdot \left(\E_\mathcal{H}\!\left[\E_B \left[e^{2 A_1^{\mathcal{H}}}\right]\right]\right)^{1/2}.
\end{align*}
We can further estimate using Lemma \ref{Lem:e_hoch_max} to handle $\E_B\left[e^{2 A_1^{\mathcal{H}}}\right]$ and end up at
\begin{align}
    \E_\mathcal{H}&[\E_B[f_\varepsilon(\mathcal{H}) \xi^H_1 \1_{(\Omega^\mathcal{H})^c}]] \leq \left(\E_\mathcal{H}\!\left[f_\varepsilon(\mathcal{H}) \p_B\!\left(M_1^\mathcal{H} > \sqrt{8c_0 \ln(T)}\right)\right] \right)^{1/2}
\cdot  e^{\frac{16.3}{\sqrt{H_0-\varepsilon}}+8}. \label{Lem_LB_1}
\end{align}
To treat the remaining expectation in the inequality above, let $T$ be large enough, such that $\sqrt{2c_0\ln(T)}\geq \frac{16.3}{\sqrt{H_0-\varepsilon}}$,
since then, by Lemma \ref{Max_bound}, we have on the set $\{\mathcal{H}\geq H_0-\varepsilon\}$ that, $\P_\mathcal{H}$-a.s.\
\begin{align*}
    \frac{\sqrt{8c_0\ln(T)}}{2}=\sqrt{2c_0\ln(T)}\geq \frac{16.3}{\sqrt{H_0-\varepsilon}}  \geq \frac{16.3}{\sqrt{\mathcal{H}}}\geq \E_B\left[M_1^\mathcal{H}\right].
\end{align*}
Using this and the Borell-TIS inequality (Lemma \ref{Borel-TIS}), we can estimate the expectation as follows
\begin{align}
    \E_\mathcal{H}\!\left[f_\varepsilon(\mathcal{H}) \p_B\!\left(M_1^\mathcal{H} > \sqrt{8c_0 \ln(T)}\right)\right]  &\leq \E_\mathcal{H}\!\left[f_\varepsilon(\mathcal{H}) \p_B\!\left(M_1^\mathcal{H} > \E_B\left[M_1^\mathcal{H}\right]+\sqrt{2c_0 \ln(T)}\right)\right] \notag\\
    &\leq  T^{-c_0}. \label{Borel_TIS_estimation}
\end{align}
Substituting this in (\ref{Lem_LB_1}) implies
\[
\E_\mathcal{H}[f_\varepsilon(\mathcal{H}) \E[\xi^\mathcal{H}_1 \1_{(\Omega^\mathcal{H})^c}]] \leq c_1T^{-c_0/2},
\]
for some constant $c_1 > 0$.
By inserting this into (\ref{punkt}), we obtain
\begin{align}
I_1 \leq \E_\mathcal{H}\!\left[f_\varepsilon(\mathcal{H}) \p_B(M_T^\mathcal{H} < a_T)\right] e^{ \sqrt{8 c_0 \ln(T)}} + c_1 T^{-c_0/2}.
\label{14}
\end{align}
\textit{Step 2: Estimation of $I_2$.} The estimation of $I_2$ relies on the fact that, when $M_T^\mathcal{H} > a_T$, the path of $B_t^\mathcal{H}$ remains above the level $\rho_T a_T$ (for some $0 < \rho_T < 1$ that we will choose later) over an interval of suitable length. Assume for the sake of notation that $T$ is an integer. The proof for non-integer $T$ is completely analogous, we only use $\lfloor T \rfloor$ instead of $T$ which leads to the same asymptotics in (\ref{16.2}). \\
We divide $[0,T]$ into $T$ equal sub-intervals $\Delta_i$, $i=1,\ldots,T$. Let $R_i^\mathcal{H} := \max_{t\in [i-1, i]}B_t^\mathcal{H}-\min_{t\in [i-1,i]}B_t^\mathcal{H}$ be the range of $B_t^\mathcal{H}$ on $\Delta_i$, set $\overline{\rho}_T := 1 - \rho_T$, and define
\[
D^\mathcal{H} := \bigcup_i \{R_i^\mathcal{H} > \overline{\rho}_T a_T\}.
\]
We now choose $a_T:= \frac{\sqrt{8c_2\ln(T)}}{\overline{\rho}_T}$ for some constant $c_2 > 0$ chosen later, so that
\begin{align}
\overline{\rho}_T a_T =  \sqrt{8c_2 \ln(T)}.
\label{15}
\end{align}
We furthermore define $\nu^\mathcal{H}(s,t) := B_{s}^\mathcal{H} - B_{t}^\mathcal{H}$, $(s,t) \in [0,1]^2$. The stationarity of increments yields
\begin{align}
\E_\mathcal{H}[f_\varepsilon(\mathcal{H}) \p_B(D^\mathcal{H})] 
&\leq \E_\mathcal{H}\!\left[f_\varepsilon(\mathcal{H})\sum_{i=0}^{T-1} \p_B\!\left(\max_{(s,t) \in [i, i+1]^2}(B_{s}^\mathcal{H}-B_{t}^\mathcal{H})   > \overline{\rho}_Ta_T\right)\right] \notag\\
&=\E_\mathcal{H}\!\left[f_\varepsilon(\mathcal{H})\cdot T\cdot  \p_B\!\left(\max_{[0,1]^2} \nu^\mathcal{H} > \sqrt{8c_2\ln(T)}\right)\right].\label{16.1}
\end{align}

To estimate this expression, we proceed very similarly to the estimation in (\ref{Borel_TIS_estimation}). 
For $T$ large enough such that $\sqrt{2c_2\ln(T)}\geq 2\frac{16.3}{\sqrt{H_0-\varepsilon}}$ and on the set $\{\mathcal{H}\geq H_0-\varepsilon\}$ we have, by Lemma \ref{Max_bound}, $\P_{\mathcal{H}}$-a.s.\
\begin{align*}
    \sqrt{2c_2\ln(T)}  &\geq 2 \E_B[M_1^\mathcal{H}] % \\
     = \E_B\left[ \max_{t\in [0,1]}B_t^\mathcal{H}\right]- \E_B\left[\min_{t\in[0,1]}B_t^\mathcal{H}\right] %  \\
    = \E_B\left[\max_{(s,t)\in [0,1]^2}\nu^\mathcal{H}(s,t)\right].
\end{align*}
We can therefore further estimate (\ref{16.1}) with the Borell-TIS inequality (Lemma~\ref{Borel-TIS}):
\begin{align}
    \E_\mathcal{H}[f_\varepsilon(\mathcal{H}) \p_B(D^\mathcal{H})] % \notag\\   
& \leq \E_\mathcal{H}\!\left[f_\varepsilon(\mathcal{H})\cdot T\cdot  \p_B\!\left(\max_{[0,1]^2} \nu^\mathcal{H} > \sqrt{8c_2\ln(T)}\right)\right]\notag\\
&\leq T\E_\mathcal{H}\!\left[f_\varepsilon(\mathcal{H}) \p_B\!\left(\max_{[0,1]^2} \nu^\mathcal{H} > \sqrt{2c_2\ln(T)} + \E_B\left[\max_{[0,1]^2}\nu^\mathcal{H}\right]\right)\right]\notag\\
&\leq T^{-(c_2-1)}, \label{16.2}
\end{align}
where we used that $\sup_{(s, t)\in [0,1]^2}\E_B[\nu^\mathcal{H}(s,t)^2]\leq 1$. On the other hand, the event $(D^\mathcal{H})^c \cap \{M_T^\mathcal{H} > a_T\}$ is a subset of
\[
C^\mathcal{H} := \{\lambda(\{s \in [0,T]: B_s^\mathcal{H} > \rho_T a_T\}) \geq 1\},
\]
since any maximizer $t^\ast\in[0,T]$ of $B^\mathcal{H}$ belongs to one of the intervals $\Delta_i$, $i=1,\ldots,T$, say $\Delta_j$ and on $(D^\mathcal{H})^c\cap \{M_T^\mathcal{H}>a_T\}$ that means that we have $B_t^\mathcal{H} > \rho_T a_T$ for every $t\in \Delta_j$ and $\lambda(\Delta_j) = 1$. Thus,
\begin{align}
I_2 &= \E_\mathcal{H}[f_\varepsilon(\mathcal{H}) \E_B[\xi^\mathcal{H}_T \1_{\{M_T^\mathcal{H} > a_T\}}]]  \notag\\*
&\leq \E_\mathcal{H}[f_\varepsilon(\mathcal{H}) \E_B[\xi^\mathcal{H}_1 \1_{D^\mathcal{H}}]] + \E_\mathcal{H}[f_\varepsilon(\mathcal{H}) \E_B[\xi^\mathcal{H}_T \1_{C^\mathcal{H}}]]  
=: J_1 + J_2. \label{I2split}
\end{align}
{\it Step 3: Estimation of $J_{1}$ and $J_2$.} For $J_1$, we proceed as for $I_1$ and use (\ref{16.2}):
\begin{align}
J_1 &\leq \E_\mathcal{H}[f_\varepsilon(\mathcal{H}) \p_B(D^\mathcal{H})] e^{\sqrt{8 c_0 \ln(T)}} + c_1 T^{-c_0/2} % \notag \\
\leq T^{-(c_2-1)} e^{\sqrt{8 c_0 \ln(T)}} + c_1 T^{-c_0/2}.
\label{17}
\end{align}
To estimate $J_2$, we note that using $\lambda(S)\geq 1$ on $C^\mathcal{H}$, with the notation $S:=\{s\in [0,T]: B_s^\mathcal{H}> \rho_Ta_T\}$, we have
\begin{align*}
    \E_B\left[\xi_T^\mathcal{H}\1_{C^\mathcal{H}}\right] &= \E_B\left[\left(\int_0^T e^{B_s^\mathcal{H}} \dd s\right)^{-1}\1_{C^\mathcal{H}}\right]
    \leq \E_B\left[\left(\int_S e^{B_s^\mathcal{H}} \dd s\right)^{-1}\1_{C^\mathcal{H}}\right]
    \leq e^{-\rho_Ta_T}.
    \end{align*}
Using this, setting $\rho_T := \frac{\sqrt{\ln(T)}}{1+\sqrt{\ln(T)}}$, and using (\ref{15}) and $\rho_T=1-\overline\rho_T$, we obtain
\begin{align}
J_2 &\leq \E_\mathcal{H}\!\left[f_\varepsilon(\mathcal{H}) e^{-\rho_T a_T}\right] 
\leq e^{-\rho_Ta_T} = T^{-\sqrt{8c_2}}. \label{18}
\end{align}
{\it Step 4: Final argument.} Combining (\ref{13}), (\ref{14}), (\ref{I2split}), (\ref{17}), and (\ref{18}) yields
\begin{align*}
T^{H_0-1+o(1)}
&\leq \E_\mathcal{H}\!\left[f_\varepsilon(\mathcal{H}) \p_B(M_T^\mathcal{H} < c_3 \ln(T))\right] e^{c_4 \sqrt{\ln(T)}} + T^{-2},
\end{align*}
for some constants $c_3, c_4 > 0$ and sufficiently large $T$, provided that $c_0$ and $c_2$ are chosen large enough so that $c_0/2 > 2, \sqrt{8c_2}> 2$ and $c_2-1> 2$. Thus, for large $T$,
\begin{align}
 T^{H_0-1+o(1)}\leq \E_\mathcal{H}\!\left[f_\varepsilon(\mathcal{H}) \p_B(M_T^\mathcal{H} < c_3 \ln(T))\right]. \label{19}
\end{align}
It follows from the self-similarity of FBM that 
\begin{align}
    \E_\mathcal{H}[f_\varepsilon(\mathcal{H})\p_B(M_T^\mathcal{H} < c_3\ln(T))] &= \E_\mathcal{H}[f_\varepsilon(\mathcal{H})\p_B(M_{T'}^\mathcal{H} < 1)], \label{Lem_UB_End}
\end{align}
 where $T' := \frac{T}{(c_3\ln(T))^{1/\mathcal{H}}}$.
Let us now define $T'_\varepsilon:= \frac{T}{(c_3\ln(T))^{1/(H_0-\varepsilon)}}$ and note, that on the event \(\{H\ge H_0-\varepsilon\}\) and for $T \geq 1$, the denominator of $T'$, i.e. \((c_3\ln T)^{1/H}\), is bounded above by \((c_3\ln T)^{1/(H_0-\varepsilon)}\). Hence \(T'\ge T'_\varepsilon\). Since the persistence event over the longer interval \([0,T']\) is contained in the persistence event over the shorter interval \([0,T'_\varepsilon]\), we have
\begin{align}
    \E_\mathcal{H}[f_\varepsilon(\mathcal{H})\p_B(M_{T'}^\mathcal{H} < 1)] 
    &\leq \E_\mathcal{H}[f_\varepsilon(\mathcal{H})\p_B(M_{T'_{\varepsilon}}^\mathcal{H} < 1)]. \label{Lem_UB_End2}
\end{align}
We can substitute (\ref{Lem_UB_End}) and (\ref{Lem_UB_End2}) in (\ref{19}) and use the estimate $f_\varepsilon(H) \leq 1$ to arrive at
\begin{align*}
    T^{H_0-1+o(1)} &\leq \E_\mathcal{H}[f_\varepsilon(\mathcal{H})\p_B(M_{T'_{\varepsilon}}^\mathcal{H} < 1)] \leq \E_\mathcal{H}\left[ \p_B(M_{T'_{\varepsilon}}^\mathcal{H} < 1)\right].
\end{align*}
By substituting $T'_\varepsilon$ in the inequality above, we obtain that for sufficiently large $T'_\varepsilon$
\begin{align*}
   (T'_\varepsilon)^{H_0-1+o(1)}\leq \E_\mathcal{H}[\p_B(M_{T'_{\varepsilon}}^\mathcal{H} < 1)],
\end{align*}
where we used that $T=(T_\varepsilon')^{1+o(1)}$ to rewrite the left hand side of (\ref{19}).
\end{proof}

%%%%%%%%%%%%%%%%%%%%%%%%%%%%%%%%%%%%%%%%%%%%%%%%%%%%%%%%%%
%%%%%%%%%%%%%%%%%%%%%%%%%%%%%%%%%%%%%%%%%%%%%%%%%%%%%%%%%%
%%%%%%%%%%%%%%%%%%%%%%%%%%%%%%%%%%%%%%%%%%%%%%%%%%%%%%%%%%

\section{Proof of the Upper Bound in Theorem~\ref{Thm}}\label{sec_UB}
The goal of this section is to prove the upper bound in Theorem~\ref{Thm}.
The key idea is to reduce the continuous-time event to a persistence event for a discretely sampled path. To treat this discrete-time problem, we refine the approach of \cite{Aurzada18}, developed for persistence probabilities of discretely sampled processes. %The sampling scheme depends on the Hurst exponent (in particular for small values of $H$) and is chosen so that the resulting bounds remain stable uniformly in $H\in (0,H_0)$.
The main additional difficulty compared with the fixed-\(H\) case is that estimates which are harmless for a fixed Hurst exponent may deteriorate as \(H\downarrow0\). We therefore choose the discretization scale in a way that depends on \(H\), and we separate the argument into regimes where different lower bounds for the discrete persistence probability are effective. 

\subsection{Lemmas for the Upper Bound}

As for the lower bound, we begin by collecting several auxiliary results that will be needed in the proof of the upper bound. Throughout this subsection, whenever fractional Brownian motion is involved, the Hurst exponent is kept fixed (i.e.\ non-random). We start by recalling the following estimate, which appears as Theorem~2 in \cite{Aurzada18}.

\begin{Lemma}\label{Thm2}
    Let $(Z_j)_{j\geq 0}$ be a centered process with stationary increments, i.e.\
$(Z_{j+\ell}-Z_\ell)_{j\geq 0} \deq (Z_j)_{j\geq 0}$ for all $\ell\geq 0$. Then for all $a> 0$ and all $n\in \N$:
    \begin{align*}
        \p\left(\max_{j=1,\ldots,n}Z_j \leq -a\right)\leq \frac{\E[\max_{j=1,\ldots,n+1}Z_j]}{an}.
    \end{align*}
\end{Lemma}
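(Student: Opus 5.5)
The plan is to derive a one-step recursion for the expected running maximum from the stationarity of increments, and then telescope it. Write $m_k:=\max_{j=1,\ldots,k}Z_j$ for $k\ge 1$. The key identity I would establish first is a pathwise decomposition obtained by splitting off the first step:
\begin{align*}
m_{k+1}=\max\Big(Z_1,\ \max_{j=1,\ldots,k}Z_{j+1}\Big)=Z_1+\max\Big(0,\ \max_{j=1,\ldots,k}(Z_{j+1}-Z_1)\Big).
\end{align*}
By the stationarity of increments with $\ell=1$, the process $(Z_{j+1}-Z_1)_{j\ge0}$ has the same law as $(Z_j)_{j\ge0}$. Hence $\max_{j=1,\ldots,k}(Z_{j+1}-Z_1)$ has the same law as $m_k$.

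Next I would take expectations. Since $Z_1$ is centered, this gives
\begin{align*}
\E[m_{k+1}]=\E[Z_1]+\E[\max(0,m_k)]=\E[m_k^+]=\E[m_k]+\E[m_k^-],
\end{align*}
where $x^-=\max(-x,0)$. Thus the increment $\E[m_{k+1}]-\E[m_k]$ equals $\E[m_k^-]$. By Markov's inequality, $\E[m_k^-]\ge a\,\p(m_k\le -a)$. For $k\le n$ we have $m_k\le m_n$, so $\{m_n\le -a\}\subseteq\{m_k\le -a\}$. Therefore every increment with $1\le k\le n$ is at least $a\,\p(m_n\le -a)$.

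Summing over $k=1,\ldots,n$ telescopes to
\begin{align*}
\E[m_{n+1}]-\E[m_1]\ \ge\ a\,n\,\p\Big(\max_{j=1,\ldots,n}Z_j\le -a\Big).
\end{align*}
Since $\E[m_1]=\E[Z_1]=0$, this is exactly the claim.

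The only delicate point, which I expect to be the main (minor) obstacle, is integrability, so that the subtractions of expectations are legitimate. If $\E[m_{n+1}]=+\infty$, the bound is trivial. Otherwise, each $m_k$ with $k\le n+1$ satisfies $Z_1\le m_k\le m_{n+1}$, with $Z_1$ integrable because the process is centered. Hence all $m_k$ are integrable and the telescoping argument is valid. No further input from the earlier lemmas is needed.
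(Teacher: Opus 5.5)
Your proof is correct and complete. The paper does not prove this statement itself; it quotes it as Theorem~2 of \cite{Aurzada18}, so there is no in-paper argument to compare against, and your proposal supplies a self-contained one.

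Your argument runs as follows. The first-step decomposition $m_{k+1}=Z_1+\max\bigl(0,\max_{j=1,\ldots,k}(Z_{j+1}-Z_1)\bigr)$, stationarity of increments at lag $\ell=1$, and $\E[Z_1]=0$ give the exact identity $\E[m_{k+1}]=\E[m_k^+]$. Hence $\E[m_{n+1}]=\sum_{k=1}^n\E[m_k^-]$. For $k\le n$ you have $m_k^-\ge m_n^-$, and Markov's inequality then finishes the proof.

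This route actually yields the slightly stronger bound $\E[m_n^-]\le \E[m_{n+1}]/n$; the lemma is its Markov consequence. It also uses nothing beyond first moments and stationarity of increments, with no Gaussian structure. That is consistent with how the lemma is applied in Corollary~\ref{Cor:No_Cond_on_H} to $Z_j=B^H_{j/m}$.

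One small simplification: the case $\E[m_{n+1}]=+\infty$ that you set aside cannot occur. The variable $m_1=Z_1$ is integrable. Moreover $\E[m_{k+1}]=\E[m_k^+]\le \E|m_k|$ and $m_{k+1}\ge Z_1$. By induction every $m_k$ is integrable, so all the subtractions in your telescoping sum are automatically legitimate.
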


As a consequence, we obtain the following corollary.

\begin{Corollary}\label{Cor:No_Cond_on_H}
    Let $(B_t^H)_{t\geq 0}$ be a fractional Brownian motion with Hurst exponent $H\in (0,1)$ and let $n, m \in \N$. Then 
    \begin{align*}
        \p\left(\max_{k=1,\ldots,nm}B_{k/m}^H \leq -1 \right) \leq 2n^{H-1}\frac{\E\left[ \max_{t\in [0,1]}B_t^H\right]}{m}.
    \end{align*}
\end{Corollary}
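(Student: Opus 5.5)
The plan is to apply Lemma~\ref{Thm2} to the FBM sampled on the grid of mesh $1/m$, and then use self-similarity to turn the resulting expected discrete maximum into $\E[M_1^H]$.

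\textbf{Step 1 (the sampled process).} Set $Z_j := B^H_{j/m}$ for $j\ge 0$. It is centered. Because $B^H$ has stationary increments, for every $\ell\ge 0$ we have $(B^H_{(j+\ell)/m}-B^H_{\ell/m})_{j\ge0}\deq(B^H_{j/m})_{j\ge0}$. So $(Z_j)$ satisfies the hypothesis of Lemma~\ref{Thm2}.

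\textbf{Step 2 (apply Lemma~\ref{Thm2}).} Use $a=1$, with $nm$ in place of $n$. This gives
\begin{align*}
\p\left(\max_{k=1,\ldots,nm}B^H_{k/m}\le -1\right)\le \frac{\E\left[\max_{j=1,\ldots,nm+1}B^H_{j/m}\right]}{nm}.
\end{align*}

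\textbf{Step 3 (pass to the continuous maximum).} Each grid point satisfies $j/m\le (nm+1)/m\le 2n$, since $nm+1\le 2nm$. Hence the discrete maximum is pathwise bounded by $\max_{t\in[0,2n]}B^H_t$. By $H$-self-similarity,
\begin{align*}
\E\left[\max_{t\in[0,2n]}B_t^H\right]=(2n)^H\,\E\left[\max_{t\in[0,1]}B_t^H\right].
\end{align*}
This expectation is finite by Lemma~\ref{Max_bound}, and it is nonnegative because $B_0^H=0$.

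\textbf{Step 4 (combine).} Putting the steps together,
\begin{align*}
\p\left(\max_{k=1,\ldots,nm}B^H_{k/m}\le -1\right)\le \frac{(2n)^H\,\E[M_1^H]}{nm}=2^H n^{H-1}\frac{\E[M_1^H]}{m}\le 2n^{H-1}\frac{\E[M_1^H]}{m},
\end{align*}
where the last step uses $2^H\le 2$.

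There is no real obstacle. The only points to check are the increment-stationarity of the sampled sequence and the containment of the grid $\{j/m: j\le nm+1\}$ in $[0,2n]$, which is where the harmless factor $2$ in the statement comes from.
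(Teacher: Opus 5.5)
Your proof is correct and matches the paper's argument: apply Lemma~\ref{Thm2} to $Z_j=B^H_{j/m}$ with $a=1$ and $nm$ points, dominate the discrete maximum by a continuous one, and use self-similarity. The only difference is cosmetic: the paper dominates by the maximum over $[0,n+1/m]$ and bounds $(n+1/m)^H/n\le 2n^{H-1}$, whereas you enlarge directly to $[0,2n]$ and use $2^H\le 2$.
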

\begin{proof}
    The process $(Z_j)_{j\geq 0}:=(B_{j/m}^H)_{j\geq 0}$ is a centered process with stationary increments. Therefore, a direct consequence of the previous lemma is
    \begin{align*}
        \p\left(\max_{k=1,\ldots,nm}B_{k/m}^H \leq -1 \right)
        &\leq \frac{\E[\max_{k=1,\ldots,nm+1}B_{k/m}^H]}{nm} \\*
        &\leq \frac{(n+1/m)^H}{n}\frac{\E\left[ \max_{t\in [0,1]}B_t^H\right]}{m}\\*
        &\leq 2n^{H-1}\frac{\E\left[ \max_{t\in [0,1]}B_t^H\right]}{m},
    \end{align*}
    where we used \[ \max_{k=1,\ldots,nm+1}B_{k/m}^H \leq \max_{t\in [0, n+1/m]}B_t^H \stackrel{d}{=} (n+1/m)^H\max_{t\in [0, 1]}B_t^H,\] using $\{1/m, \ldots, n, n+1/m\}\subseteq[0,n+1/m]$ and the self-similarity of FBM. 
\end{proof}

The following lemma, which is Theorem 2 in \cite{BMNZ2017}, will be particularly useful, as it provides a quantitative bound on the discretization error, i.e.\ the expected difference between the continuous-time maximum of an FBM on $[0,1]$ and the maximum over an equidistant grid on this interval.

\begin{Lemma}\label{Lem:Discretisation_Error}
    Let $(B_t^H)_{t\geq 0}$ be a fractional Brownian motion with Hurst exponent $H\in (0,1)$. Then, for any $n \geq 2^{1/H}$, we have
    \begin{align*}
        \E\left[ \max_{t \in [0,1]}B_t^H\right] -\E\left[ \max_{k=1,\ldots,n}B_{k/n}^H\right]\leq 12\,\frac{\sqrt{\ln(n)}}{n^H}.
    \end{align*}
\end{Lemma}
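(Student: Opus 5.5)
The plan is to compare the continuous maximum with the grid maximum through the local oscillation of the path near the maximiser. We then bound the expected maximum of $n$ local oscillations using self-similarity, Borell--TIS and a union bound.

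\textit{Step 1: pathwise reduction.} Let $\tau\in[0,1]$ be a maximiser of $B^H$ on $[0,1]$. Pick $k\in\{1,\ldots,n\}$ with $\tau\in[(k-1)/n,k/n]$. Then
\[
M_1^H-\max_{j=1,\ldots,n}B^H_{j/n}\le B^H_\tau-B^H_{k/n}\le \max_{k=1,\ldots,n} Y_k,\qquad Y_k:=\sup_{t\in[(k-1)/n,k/n]}\bigl(B^H_t-B^H_{k/n}\bigr).
\]
So it suffices to bound $\E[\max_k Y_k]$.

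\textit{Step 2: distribution of each $Y_k$.} By stationarity of increments and time reversal, $(B^H_{k/n-s}-B^H_{k/n})_{s\ge0}\deq (B^H_s)_{s\ge 0}$. Hence $Y_k\deq \max_{s\in[0,1/n]}B^H_s\deq n^{-H}M_1^H$ by self-similarity. Each $Y_k$ is the supremum of a centred Gaussian process with maximal variance $n^{-2H}$. By Borell--TIS (Lemma~\ref{Borel-TIS}), $Y_k-\E Y_k$ is subgaussian with parameter $n^{-H}$, and $\E Y_k=n^{-H}\E M_1^H$.

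\textit{Step 3: maximum of $n$ subgaussians.} By the standard bound $\E\max_{k\le n}X_k\le \sigma\sqrt{2\ln n}$ for variables with Gaussian-type upper tails (via Jensen on $e^{\theta\max X_k}$, or integrating the union bound $\p(\max_k X_k>u)\le n e^{-u^2/(2\sigma^2)}$), we get
\[
\E\Bigl[\max_k Y_k\Bigr]\le n^{-H}\Bigl(\E M_1^H + C\sqrt{\ln n}\Bigr).
\]
This needs no independence between the $Y_k$.

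\textit{Step 4: absorbing the $H$-dependence.} Lemma~\ref{Max_bound} gives $\E M_1^H\le c/\sqrt H$. The hypothesis $n\ge 2^{1/H}$ means $\ln n\ge \ln 2/H$, i.e.\ $1/\sqrt H\le \sqrt{\ln n/\ln 2}$. Therefore $\E M_1^H\le c'\sqrt{\ln n}$, and the total is at most $C''\sqrt{\ln n}\,n^{-H}$. This is exactly the role of the condition $n\ge 2^{1/H}$.

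The main obstacle is the explicit constant $12$. With the crude constant $16.3$ from Lemma~\ref{Max_bound}, Step 4 gives roughly $16.3/\sqrt{\ln 2}+O(1)\approx 20$, which is too large. To reach $12$ I would replace Lemma~\ref{Max_bound} by a sharper chaining (Dudley entropy) estimate $\E M_1^H\le c_*/\sqrt H$ with a smaller $c_*$. The increment metric $d(s,t)=|t-s|^H$ has covering numbers $N(\eps)\approx\eps^{-1/H}$, so Dudley's integral yields $c_*\sqrt{1/H}$ with an explicit moderate $c_*$. I would then redo the union-bound integration in Step 3 carefully, without losing factors, as in \cite{BMNZ2017}.
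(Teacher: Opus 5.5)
The paper gives no proof of this lemma; it imports it from \cite[Theorem~2]{BMNZ2017}. Its only use (Step~4 of Lemma~\ref{Lem:Case1}) needs merely some absolute constant in place of $12$. Your argument is therefore a genuinely different, self-contained route, and Steps~1--3 are correct. You bound the discretisation error by the maximum of the local suprema $Y_k\deq n^{-H}M_1^H$, then use Borell--TIS and a union bound, with no independence needed. This gives, for every integer $n\ge 2$,
\[
\E[M_1^H]-\E\Bigl[\max_{k\le n}B^H_{k/n}\Bigr]\le n^{-H}\Bigl(\E[M_1^H]+\sqrt{2\ln n}+(2\ln n)^{-1/2}\Bigr).
\]
Step~4 correctly identifies the role of $n\ge 2^{1/H}$. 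So you have proved the bound $C\sqrt{\ln n}/n^H$ for some absolute $C$.

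The gap is the constant $12$ itself. With Lemma~\ref{Max_bound}, your constant is $16.3/\sqrt{\ln 2}+\sqrt2+1/(\sqrt2\ln2)\approx 22$. To reach $12$ you need $\E[M_1^H]\le c_*/\sqrt H$ with $c_*\le (12-\sqrt2-1/(\sqrt2\ln 2))\sqrt{\ln 2}\approx 7.96$. Your appeal to Dudley does not establish this, since you never compute $c_*$. The answer also depends on which form of the entropy bound is used: the frequently quoted $12\int_0^\infty\sqrt{\ln N(\eps)}\,\dd\eps$ gives roughly $12\,\Gamma(3/2)\approx 10.6$ as $H\downarrow 0$, which is too large.

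You can close the gap without any chaining by bootstrapping your own inequality. Apply it with $m:=\lceil 2^{1/H}\rceil$, so that $m\ge 3$, $m^{-H}\le 1/2$ and $\ln m\le 2\ln 2/H$. Combine it with $\E[\max_{k\le m}B^H_{k/m}]\le \sqrt{2\ln m}$, which holds because all variances are at most $1$. Since $\E[M_1^H]<\infty$, you may rearrange to get
\[
\E[M_1^H]\le \frac{1+m^{-H}}{1-m^{-H}}\sqrt{2\ln m}+\frac{m^{-H}}{1-m^{-H}}(2\ln m)^{-1/2}\le 3\sqrt{2\ln m}+(2\ln m)^{-1/2}\le\frac{6\sqrt{\ln 2}+1}{\sqrt H}<\frac{6}{\sqrt H}.
\]
Inserting this into Step~4 gives the constant $6/\sqrt{\ln 2}+\sqrt2+1/(\sqrt2\ln 2)<9.7<12$, which proves the statement as given.
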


We will also use the following standard estimates for the tail of the standard normal distribution, sometimes referred to as Mills' ratio inequalities, see e.g. $(10)$ in \cite{gordon1941_mills_ratio}.%{Vershynin2025HDP2Draft}. 
 \ If $\Phi$ is the distribution function of a standard normal distribution and $\phi$ the density we have 
    \begin{align}\label{eqn:Mills_Ratio_Inequalities}
        \frac{1}{x+1/x}\,\phi(x) \leq \Phi(-x)\leq \frac{1}{x}\,\phi(x),\qquad \text{for all $x\geq 0$.}
    \end{align}

Using Corollary~\ref{Cor:No_Cond_on_H}, we can bound the probability of remaining below $-1$. To pass from the boundary $-1$ to $1$ is non-trivial and requires a change of measure argument. For this, following the approach of \cite{Aurzada18}, we will need a suitable {\it lower} bound for the persistence probability of the discretely sampled FBM. The lower bound from Section~\ref{sec_LB} is not sufficient here, since we require estimates that also cover very small Hurst exponents. 
The next two lemmas provide the lower bounds on discrete persistence probabilities that are needed in the change-of-measure step below. We split the argument into two regimes. Lemma~\ref{Lem:Case1} treats the case where $H$ satisfies a certain lower bound depending on $n$, while Lemma~\ref{Lem:Case2} covers the complementary regime of very small $H$ relative to $n$

\begin{Lemma}\label{Lem:Case1}
  Fix $\delta>1/4$. Then there exist constants $c>0$ and $n_0\in\mathbb{N}$ such that for all 
  $n\geq n_0$ and all $H$ satisfying
  \begin{align} \label{eqn:rewriteHcondition}
      H \;\geq\; \frac{\delta \ln\!\,\bigl(\ln n\bigr)}{\ln n}
  \end{align}
  we have
  \[
      \mathbb{P}\!\left( \max_{k=1,\dots,n} B_k^H \leq 0 \right)
      \;\geq\;
      \frac{c}{n^3},
  \]
  where $(B_t^H)_{t\in \R}$ is a fractional Brownian motion with Hurst exponent $H\in (0,1)$.
\end{Lemma}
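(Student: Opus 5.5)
The plan is to bypass the size condition \eqref{eqn:rewriteHcondition} entirely and prove the stronger bound $\p(\max_{k=1,\dots,n}B_k^H\le 0)\ge 1/(2n+1)$, uniformly in $H\in(0,1)$. This implies the claim with, say, $c=1$ and $n_0=3$, since $1/(2n+1)\ge n^{-3}$ for $n\ge 2$. The argument is a combinatorial ``position of the maximum'' argument that uses only stationarity of increments and the symmetry $B^H\deq -B^H$. It does not need any of the quantitative estimates above (Lemma~\ref{Max_bound}, Borell--TIS), so uniformity in $H$ comes for free.

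\emph{Step 1 (the location of the maximum).} Write $p_m:=\p(\max_{i=1,\dots,m}B_i^H<0)$, which is non-increasing in $m$. Consider the Gaussian vector $(B_0^H,B_1^H,\dots,B_{2n}^H)$ with $B_0^H=0$. Its increments have a non-degenerate joint law, because FBM restricted to finitely many distinct times has a positive definite covariance. Hence a.s.\ there is a unique index $J\in\{0,\dots,2n\}$ with $B_J^H=\max_{0\le k\le 2n}B_k^H$. Then
\[
1=\sum_{j=0}^{2n}\p(J=j).
\]

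\emph{Step 2 (each term is at most $p_n$).} On $\{J=j\}$ two events hold. First, $B^H_{j+i}-B^H_j<0$ for $i=1,\dots,2n-j$. Second, $B^H_{j-i}-B^H_j<0$ for $i=1,\dots,j$.
\begin{itemize}
\item By stationarity of increments, $(B^H_{j+i}-B^H_j)_{i\ge0}\deq(B^H_i)_{i\ge0}$, so the first event has probability $p_{2n-j}$.
\item By stationarity of increments, $(B^H_{j-i}-B^H_j)_{0\le i\le j}\deq(-B^H_i)_{0\le i\le j}$. By the symmetry of FBM this has the same law as $(B^H_i)_{0\le i\le j}$, so the second event has probability $p_j$.
\end{itemize}
Since $\max(j,2n-j)\ge n$ and $p$ is non-increasing, $\p(J=j)\le\min(p_j,p_{2n-j})\le p_n$ for every $j$.

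\emph{Step 3 (conclusion).} Summing over $j$ gives $1\le(2n+1)p_n$. Since $\{\max_{k\le n}B_k^H<0\}\subseteq\{\max_{k\le n}B_k^H\le 0\}$, we get
\[
\p\Big(\max_{k=1,\dots,n}B_k^H\le 0\Big)\ge\frac1{2n+1}\ge\frac{1}{n^{3}}
\]
for all $n\ge2$ and every $H\in(0,1)$, in particular under \eqref{eqn:rewriteHcondition}.

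The step needing the most care is Step~1, namely that the maximiser is a.s.\ unique. This requires that, for $j\ne k$, the difference $B^H_j-B^H_k$ is a non-degenerate Gaussian; this holds since its variance is $|j-k|^{2H}>0$. Then $\p(B^H_j=B^H_k)=0$, and a union over finitely many pairs settles it.

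If the later change-of-measure step instead needs a bound tied to a specific barrier or grid that this argument does not cover, I would fall back on a Slepian/conditioning argument. One writes $B_k^H=c_kB_1^H+R_k$ with $c_k=\tfrac12(1+k^{2H}-(k-1)^{2H})\ge\tfrac12$ and $R$ independent of $B_1^H$, forces $B_1^H$ very negative, and controls $\max_k R_k$ by a union bound. That route, however, only works well for very small $H$, which is presumably the regime of Lemma~\ref{Lem:Case2}.
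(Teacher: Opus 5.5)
Your proof is correct. It follows a genuinely different route from the paper's, and it proves more than the lemma: $\mathbb{P}(\max_{k\le n}B_k^H\le 0)\ge 1/(2n+1)$ for every $H\in(0,1)$ and every $n\ge1$. So neither the restriction $H\ge\delta\ln\ln n/\ln n$ nor the assumption $\delta>1/4$ is needed.

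The paper adapts the record-counting method for stationary-increment processes. It works as follows:
\begin{itemize}
\item It bounds $n^2\,\mathbb{P}(S_n^H<0)$ from below by the expected number of right-to-left records on $\{1,\dots,n^2\}$.
\item It compares that number with the drop $S^H_{n+n^2}-S^H_{1+n^2,n+n^2}$ of the maximum, after truncating the one-step decrements at $b_n=n$.
\item It then needs Lemma~\ref{Max_bound}, the discretisation estimate Lemma~\ref{Lem:Discretisation_Error}, Borell--TIS and Mills' ratio.
\end{itemize}
The condition on $H$ is there precisely so that the main term $\tfrac12(\ln n)^{2\delta}$ dominates a discretisation error of order $\sqrt{\ln n}$. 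The output is $c/(n^3\sqrt H)$.

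Your decomposition by the location of the maximiser on $\{0,\dots,2n\}$ needs only three things: a.s.\ uniqueness of the discrete maximiser, stationarity of increments, and time reversibility. For the backward half, the cleanest justification is a covariance comparison. Both $(B^H_{j-i}-B^H_j)_{0\le i\le j}$ and $(B^H_i)_{0\le i\le j}$ have covariance $\tfrac12(i^{2H}+k^{2H}-|i-k|^{2H})$, so they are equal in law and the sign flip via symmetry is unnecessary. This is the same reversibility the paper itself uses in Step~5.

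Your route also pays off downstream. Proposition~\ref{Prop:No_cond_on_H} uses the lemma exactly in the form $\mathbb{P}(\max_{j\le nm}B_j^H\le 0)$. Your bound therefore gives $\ln(1/p_{n,m})\le\ln(2nm+1)$ uniformly in $H$. This makes Lemma~\ref{Lem:Case2} and the case distinction there superfluous, and it would replace $\ln(nm)^{1/2+\delta}$ by $\sqrt{\ln(nm)}$ in \eqref{No_cond_on_H}. The record method is designed to yield sharper, exponent-level information, but with the crude parameter choices made here none of that is used. Your closing fallback paragraph is accordingly unnecessary.
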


\begin{proof}
    The proof is a refinement of the method of proof of Theorem 4(ii) in \cite{Aurzada18}. In the following, we fix the notation
    \begin{align*}
        S^H_{n} &:= \max_{k=1,\ldots,n}B_k^H, \qquad \text{and}\qquad 
        S^H_{m, n} := \max_{k=m,\ldots,n}B_k^H.
    \end{align*}
    \medskip
    \\ 
    \noindent\textit{Step 1: From $\p(S_n^H < 0)$ to the expected number of right-to-left records.}\\
    Let $T_n^H := \# \{m\in \{1,\ldots,n^2\}\ : \ S^H_{1+m,n+n^2} < B_m^H\}$. Observe that $T_n^H$ represents the (random) number of `record times' $m$, where the maximum $S_{m,n+n^2}$ over the `remaining times' $\{m,\ldots,n+n^2\}$ is attained at the first time index $m$. Note that
    \begin{align}
        n^2 \p\left(S^H_{n}< 0\right) &\geq \sum_{m=1}^{n^2} \p\left( S^H_{n+n^2-m} < 0\right)% \notag\\
        = \sum_{m=1}^{n^2} \p\left(S^H_{m+1,n+n^2}< B_m^H\right)% \notag\\
        = \E[T_n^H], \label{A:11}
    \end{align}
    where we used the stationary increments in the second step. 
    \medskip
    \\ 
\noindent\textit{Step 2: Controlling the drop in the maximum.}\\
    We set $R^H_{T_n^H+1} := \min\{m\in \{n^2+1,\ldots,n+n^2\}\ : \ S^H_{1+m,n+n^2} < B_m^H \}$, with the convention $S_{n+n^2+1, n+n^2}^H = -\infty$ (so that $R  ^H_{T_n^H+1}$ is well-defined and less than or equal to $n+n^2$). That means, $R^H_{T_n^H+1}$ is the first record of $S^H_{1+m,n+n^2} < B_m^H$ after time $n^2$ and we can thus recursively define the $T_n^H$ many records up to time $n^2$ by
    \begin{align*}
        R^H_i &:= \max\{m\in \{1,\ldots,R^H_{i+1}-1\}\ :\ S^H_{1+m,n+n^2} < B_m^H \}\\
        &= \max\{m\in \{1,\ldots,R^H_{i+1}-1\}\ :\ B_{R_{i+1}}^H < B_m^H \},\qquad i=1,\ldots,T_n^H.
    \end{align*} 
    Note that $R_1^H<R_2^H<\ldots<R_{T_n^H+1}^H $ are the successive record times when scanning from right to left, i.e.\ the times $m$ such that $S_{m+1,n+n^2}^H < B_m^H $. Figure~\ref{fig:right-to-left-records} provides a schematic illustration of the right-to-left record times $R_1^H,\ldots,R_{T_n^H+1}^H$ and is included solely to aid intuition.
\begin{figure}[t]
    \centering
    \includegraphics[width=0.7\textwidth]{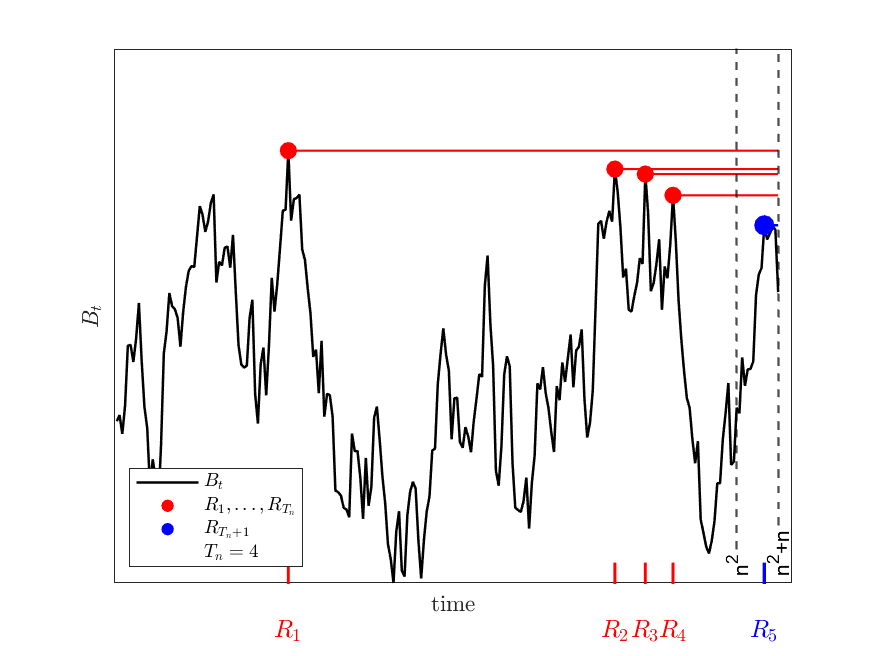}
    \caption{Sketch of the right-to-left record times $R_1^H<\cdots<R_{T_n^H+1}^H$, with reference points at $n^2$ and $n^2+n$ for $H = 1/2$, $n = 15$.}
    \label{fig:right-to-left-records}
\end{figure}
    Note that $B_{R^H_{1}}^H = S^H_{n+n^2}$ and $B_{R^H_{T_n^H+1}}^H = S^H_{1+n^2,n+n^2}$. Hence
    \begin{align}
        S^H_{n+n^2}-S^H_{1+n^2,n+n^2}&= B_{R_1^H}^H - B_{R_{T_n^H +1}^H}^H \notag\\*
        &= \sum_{i=1}^{T_n^H}(B_{R_i^H}^H - B_{R_{i+1}^H}^H)\notag\\*
        &\leq \sum_{i=1}^{T_n^H}(B_{R_i^H}^H - B_{R_{i}^H+1}^H) \notag\\*
        &\leq \max_{m=1,\ldots,n^2}(B_m^H-B_{m+1}^H)\cdot T_n^H,\label{A:12}
    \end{align}
    where we used the fact that $B_{R_{i+1}^H}^H \geq B_{R_i^H +1}^H$, which comes from the definition of $R_i^H$. Observe that (\ref{A:12}) becomes $0 \leq 0$ if $T_n^H = 0$. 
      \medskip
    \\ 
    \noindent
    \textit{Step 3: Isolating the main contribution and a first control of the error term.}\\
    Combining (\ref{A:11}) and (\ref{A:12}), we obtain that for every $b_n > 0$, which will be chosen later,
    \begin{align}
         n^2 &\p\left(S^H_n < 0\right) \notag
         \\
         &\geq \E[T_n^H]\notag\\
         &\geq \E\left[T_n^H \1_{\{\max_{m=1,\ldots,n^2}(B_m^H-B_{m+1}^H) \leq b_n\}}\right] \notag\\
         &\geq b_n^{-1}\E\left[(S^H_{n+n^2} - S^H_{1+n^2,n+n^2}) \1_{\{\max_{m=1,\ldots,n^2}(B_m^H-B_{m+1}^H) \leq b_n\}}\right] \notag 
         \\
          &= b_n^{-1}\E\left[(S^H_{n+n^2} -B_{n+n^2}^H - (S^H_{1+n^2,n+n^2}-B_{n+n^2}^H)) \1_{\{\max_{m=1,\ldots,n^2}(B_m^H-B_{m+1}^H) \leq b_n\}}\right]. \label{A:13}
    \end{align}
    Since $S^H_{1+n^2,n+n^2}-B_{n+n^2}^H \geq 0$, we can estimate
    \begin{align*}
        \E\left[(S^H_{1+n^2,n+n^2} - B_{n+n^2}^H) \1_{\{\max_{m=1,\ldots,n^2}(B_m^H-B_{m+1}^H) \leq b_n\}}\right] &\leq \E\left[S_{1+n^2, n+n^2}^H-B_{n+n^2}^H\right] \\
        &= \E\left[S_{1+n^2, n+n^2}^H-B_{n^2}^H\right] \\
        &= \E[S^H_n],
    \end{align*}
    where we used the fact that $\E[B^H_{n+n^2}] = \E\left[B_{n^2}^H\right] = 0$ and the stationary increments. By plugging this into (\ref{A:13}) and using again the fact that $\E\left[B_{n+n^2}^H\right] = 0$, we obtain
    \begin{align}
        &\quad n^2 \p\left(S^H_n < 0\right) \notag\\
        &\geq b_n^{-1}\left(\E\left[(S^H_{n+n^2} -B_{n+n^2}^H ) \1_{\{\max_{m=1,\ldots, n^2}(B_m^H-B_{m+1}^H) \leq b_n\}}\right]- \E\left[S^H_n\right]\right) \notag\\
        &\geq b_n^{-1}\left(\E\left[S^H_{n+n^2}-S^H_n\right] - \E\left[(S^H_{n+n^2} -B_{n+n^2}^H) \1_{\{\max_{m=1,\ldots, n^2}(B_m^H-B_{m+1}^H) > b_n\}}\right]\right) \notag\\
        &\geq b_n^{-1}\left(\E\left[S^H_{n+n^2}-S^H_n\right]  - \left\|S^H_{n+n^2}-B_{n+n^2}^H\right\|_2 \p\left(\max_{m=1,\ldots,n^2}(B_m^H-B_{m+1}^H) > b_n\right)^{1/2}
        \right), \label{plus}
    \end{align}
    where we used Cauchy-Schwarz in the last step and denote $\|X\|_2 := \E[X^2]^{1/2}$. We further estimate the occurring probability by the subadditivity of $\p$ and the stationary increments:
    \begin{align*}
        \p\left(\max_{m=1,\ldots,n^2}(B_m^H-B_{m+1}^H) > b_n\right) \leq \sum_{m=1}^{n^2} \p\left( -B_1 > b_n\right) = n^2~\p(\mathcal{N}(0,1)> b_n) .  
    \end{align*}
    Continuing in (\ref{plus}), we obtain
    \begin{align}
        &n^2 \p\left(S^H_n < 0\right) %  \notag\\
        \geq b_n^{-1}\left(\E\left[S^H_{n+n^2}-S^H_n\right]  - \left\|S^H_{n+n^2}-B_{n+n^2}^H\right\|_2 \left(n^2~\p(\mathcal{N}(0,1)> b_n)\right)^{1/2} \label{Rest}
        \right).
    \end{align}
    It remains to bound the first term in parentheses from below, which governs the asymptotics, and to control the second term from above, which will be shown to be negligible.
     \medskip
    \\ 
    \noindent
    \textit{Step 4: Control of the leading order term.}\\
    Let us now treat the first expression in the parentheses of (\ref{Rest}). We proceed by inserting the continuous maximum as follows
    \begin{align*}
        \E\left[S^H_{n+n^2}-S^H_n\right] &\geq (n+n^2)^H \E\left[\max_{k=1,\ldots,n+n^2}B_{k/(n+n^2)}^H \right]- n^H \E\left[\max_{t\in [0,1]}B_{t}^H \right]\\
        &= ((n+n^2)^H-n^H)\E\left[\max_{t\in [0,1]}B_t^H \right] \\
        &\quad \quad - (n+n^2)^H\E\bigg[ \max_{t\in [0,1]}B_t^H  
         -\max_{k=1,\ldots,n+n^2}B_{k/(n+n^2)}^H\bigg].
    \end{align*}
    We note that the second expectation is exactly the discretization error, which is, by Lemma \ref{Lem:Discretisation_Error}, bounded by $c_0(n+n^2)^{-H}\sqrt{\ln(n+n^2)}$ for some $c_0 > 0$, independent of $H$, for $(n+n^2)^H \geq 2$. %(i.e.\ at least for $n\geq e^{2^{1/\delta}}$ under (\ref{eqn:rewriteHcondition})) 
    Under condition (\ref{eqn:rewriteHcondition}), which is equivalent to $n^H \geq \ln(n)^\delta$, this is the case for every $n\geq e^{2^{1/\delta}}$, so that we can estimate
    \begin{align*}
        (n+n^2)^H\E\left[ \max_{t\in [0,1]}B_t^H  -\max_{k=1,\ldots,n+n^2}B_{k/(n+n^2)}^H\right] \leq c_0\sqrt{\ln(n+n^2)}.
    \end{align*}
    Using Lemma \ref{Max_bound} and $H<1$, we proceed as follows:
    \begin{align}
         \E\left[S^H_{n+n^2}-S^H_n\right] 
        &\geq ((n+n^2)^H-n^H)\E\left[\max_{t\in [0,1]}B_t^H \right] - c_0 \sqrt{\ln(n+n^2)} \notag\\
        &\geq \frac{1}{c_1\sqrt{H}}\left( (n+n^2)^H - n^H - c_1\sqrt{H}c_0\sqrt{\ln(n+n^2)}\right)\notag\\
        &\geq \frac{1}{c_1\sqrt{H}}\left( n^H(n^H-1) - c_1c_0\sqrt{\ln(n+n^2)}\right),\label{plusplus}
    \end{align}
    for some $c_1 > 0$ independent of $H$.
    Using again (\ref{eqn:rewriteHcondition}), we have for all $n\geq e^{2^{1/\delta}}$ that
    \begin{align*}
        n^H(n^H-1) \geq \ln(n)^\delta(\ln(n)^\delta -1) \geq \frac{1}{2}\ln(n)^{2\delta}.
    \end{align*}
    Combining this with (\ref{plusplus}) gives us
    \begin{align}
        \E\left[S^H_{n+n^2}-S^H_n\right] 
        &\geq \frac{1}{c_1\sqrt{H}}\left( \frac{1}{2}\ln(n)^{2\delta}-c_1c_0 \sqrt{\ln(2n^2)}\right) 
        \geq \frac{c_2}{\sqrt{H}}, \label{main_impact}
    \end{align}
    for some $c_2 > 0$ that does not depend on $H$ and for $n$ large enough, because of $2\delta > \frac{1}{2}$.
      \medskip
    \\ 
    \noindent
    \textit{Step 5: Control of the error term.}\\
    We are left to show that the second part in the parentheses of (\ref{Rest}) is small enough such that we obtain a positive lower bound. First of all, we note that by stationary increments
    \begin{align*}
        \left\|S^H_{n+n^2}-B_{n+n^2}^H\right\|_2 &= (n+n^2)^H \left\|\max_{k=1,\ldots,n+n^2}B_{k/(n+n^2)}^H-B_1^H\right\|_2\\
         &\leq(n+n^2)^H \left\|\max_{t\in[0,1]}(B_t^H-B_1^H)\right\|_2,
    \end{align*}
    where we also used that $\left(\max_{k=1,\ldots,n+n^2}B_{k/(n+n^2)}^H-B_1^H \right)^2\leq \left( \max_{t\in[0,1]}B_t^H-B_1^H\right)^2$ a.s.\ in the last step. We note that, because of the time-reversibility of FBM, $ \max_{t\in[0,1]}(B_t^H-B_1^H)$ has the same distribution as $M_1^H = \max_{s\in [0,1]}B_s^H $, so that we can continue with the latter quantity.
    To proceed, we insert a zero and use Minkowski's inequality to estimate as follows
    \begin{align}
        \left\|M_1^H\right\|_2 =\left\|M_1^H-\E\left[M_1^H\right] + \E\left[M_1^H\right]\right\|_2 &\leq \left\|M_1^H-\E\left[M_1^H\right]\right\|_2+ \E\left[M_1^H\right] \notag \\
        &\leq c_3 + \frac{c_4}{\sqrt{H}}\leq \frac{c_5}{\sqrt{H}}, \label{step5:1}
    \end{align}
    for some $c_3, c_4, c_5 > 0$ that both do not depend on $H$, where we used Lemma \ref{Max_bound} to estimate $\E[M_1^H]$ and the Borell-TIS inequality, i.e. Lemma \ref{Borel-TIS}, to estimate $\left\|M_1^H-\E\left[M_1^H\right]\right\|_2$.
    We are left to bound the part $(n+n^2)^H(n^2~\p(-B_1^H > b_n))^{1/2}$. To do so, we set $b_n := n$ and estimate the probability using (\ref{eqn:Mills_Ratio_Inequalities}) as follows
    \begin{align}
        (n+n^2)^H\left(n^2~\p(\mathcal{N}(0,1) > b_n)\right)^{1/2} &
        %\leq 2n^{3} \left(\p(\mathcal{N}(0,1) \leq -b_n)\right)^{1/2}
        \leq 2n^{3} \sqrt{\frac{c_6}{n}}e^{-\frac{n^2}{4}}, \label{plus3}
    \end{align}
    for some constant $c_6 > 0$ that does not depend on $H$,
    where we used that $(n+n^2)^H \leq 2n^2$ since $H< 1$. We note that the last term in (\ref{plus3}) tends to zero as $n \rightarrow \infty$.
    We can therefore choose a constant $c_7 > 0$ independent of $H$ such that $c_5\cdot  c_7< c_2$, for which for large enough $n$ we have
    \begin{align*}
        (n+n^2)^H\left(n^2~\p(\mathcal{N}(0,1) > b_n) \right)^{1/2}\leq 2n^3 \sqrt{\frac{c_5}{n}}e^{-\frac{n^2}{4}} \leq c_7.
    \end{align*}
    This, together with (\ref{step5:1}) gives us the following upper bound for the error term.
    \begin{align}
        \left\|S^H_{n+n^2}-B_{n+n^2}^H\right\|_2 \left(n^2~\p(\mathcal{N}(0,1)> b_n)\right)^{1/2} \leq \frac{c_5 c_7}{\sqrt{H}}. \label{error_bound}
    \end{align}
     \medskip
    \\ 
    \noindent
    \textit{Step 6: Piecing everything together.}\\
     Inserting (\ref{main_impact}), $b_n = n$ and (\ref{error_bound}) into (\ref{Rest}), gives us for large enough $n$
    \begin{align*}
        \p(S_n^H < 0) \geq \frac{1}{n^3}\frac{1}{\sqrt{H}} \left( c_2-c_5 c_7\right)\geq \frac{c_{8}}{n^3\sqrt{H}}\geq \frac{c_{8}}{n^3},
    \end{align*}
    for some $c_8 > 0$ independent of $H$. 
\end{proof}

We now turn to the complementary small-\(H\) regime, where the Hurst exponent \(H\) satisfies an upper bound depending on \(n\), in contrast to Lemma~\ref{Lem:Case1}. The following lemma provides a lower bound for the discrete persistence probability specifically in this small-\(H\) regime.

\begin{Lemma}\label{Lem:Case2}
 Fix $\delta > 0$. There exist constants $a, b>0$ and $n_0\in\mathbb{N}$ such that for all 
  $n\geq n_0$ and all $H$ satisfying
  \begin{align*}
       H \leq \frac{\delta \ln\!\,\bigl(\ln n\bigr)}{\ln n}
  \end{align*}
  we have
  \[
      \mathbb{P}\!\left( \max_{k=1,\dots,n} B_k^H \leq 0 \right)
      \;\geq\;
      ae^{-b\ln(n)^{1+2\delta}},
  \]
  where $(B_t^H)_{t\geq 0}$ is a fractional Brownian motion with Hurst exponent $H\in (0,1)$.
\end{Lemma}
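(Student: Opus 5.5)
The plan is a comparison argument: Slepian's lemma against a process that has one common Gaussian factor plus independent noise. Nothing from the large-$H$ machinery of Lemma~\ref{Lem:Case1} will be needed. Throughout, write $\sigma_k^2:=\E[(B_k^H)^2]=k^{2H}$.

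\textbf{Step 1 (covariance structure).} The hypothesis $H\le \delta\ln(\ln n)/\ln n$ is equivalent to $n^H\le (\ln n)^{\delta}$. Hence
\[
\sigma_k^2\le \sigma^2:=(\ln n)^{2\delta}\qquad\text{for all } k=1,\dots,n.
\]
For integers $1\le j\le k$ we have $|k-j|^{2H}\le k^{2H}$ and $j^{2H}\ge 1$. Therefore
\[
\E[B_j^HB_k^H]=\tfrac12\bigl(j^{2H}+k^{2H}-|k-j|^{2H}\bigr)\ge \tfrac12 ,
\]
and the case $j=k$ is covered since $\sigma_k^2\ge 1$. So all covariances of the sampled FBM are at least $1/2$, uniformly in $H$.

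\textbf{Step 2 (comparison process).} Let $W,\xi_1,\dots,\xi_n$ be i.i.d.\ standard normal and define
\[
Z_k:=\tfrac{1}{\sqrt2}W+\sqrt{\sigma_k^2-\tfrac12}\;\xi_k .
\]
Then $\E[Z_k^2]=\sigma_k^2=\E[(B_k^H)^2]$, and for $j\neq k$ we have $\E[Z_jZ_k]=\tfrac12\le \E[B_j^HB_k^H]$. Slepian's lemma, in the form ``larger covariances with equal variances give a larger probability that the maximum stays below a level'', then yields
\[
\p\Bigl(\max_{k\le n}B_k^H\le 0\Bigr)\ge \p\Bigl(\max_{k\le n}Z_k\le 0\Bigr).
\]
This is the same tool already used for Lemma~\ref{lem:SlepianZeug}.

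\textbf{Step 3 (bounding the comparison probability).} Fix $L>0$ and use independence of $W$ from the $\xi_k$, together with $\sqrt{\sigma_k^2-1/2}\le\sigma$:
\[
\p\Bigl(\max_k Z_k\le 0\Bigr)\ge \p\bigl(W\le -\sqrt2 L\bigr)\prod_{k=1}^n \p\bigl(\xi_k\le L/\sigma\bigr)\ge \Phi(-\sqrt2L)\,\bigl(1-e^{-L^2/(2\sigma^2)}\bigr)^n .
\]
I will choose $L:=2\sigma\sqrt{\ln n}$. Then $e^{-L^2/(2\sigma^2)}=n^{-2}$, so the product is at least $(1-n^{-2})^n\ge 1/2$ for $n\ge n_0$. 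By the Mills ratio inequality \eqref{eqn:Mills_Ratio_Inequalities},
\[
\Phi(-\sqrt2L)\ge c\,L^{-1}e^{-L^2}=c\,L^{-1}e^{-4(\ln n)^{1+2\delta}} .
\]
The polynomial prefactor $L^{-1}=(2(\ln n)^{\delta+1/2})^{-1}$ is absorbed by enlarging $b$, for instance $b=5$, for $n$ large. This gives the claim with constants $a,b$ independent of $H$.

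\textbf{Main obstacle.} The delicate point is Step 2: one must make sure the version of Slepian's lemma being invoked really allows unequal variances across indices. It does, because it only needs $\E[Z_k^2]=\E[(B_k^H)^2]$ for each $k$, with covariances ordered off the diagonal. One must also make sure the covariance lower bound $1/2$ holds uniformly in $H$, which Step 1 shows.

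The other step where the result could fail to come out right is Step 3. The common factor $W$ has to be pushed down to level $\asymp \sigma\sqrt{\ln n}$ so that the union over $n$ independent noises is harmless. This costs exactly $\exp(-O(\sigma^2\ln n))=\exp(-O((\ln n)^{1+2\delta}))$, which is why the exponent $1+2\delta$ appears in the statement.
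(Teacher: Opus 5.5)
Your proof is correct. It follows the same skeleton as the paper's proof but uses a simpler comparison process and more elementary estimates. Both proofs use Slepian's lemma to pass to a process of the form ``common Gaussian part plus independent noise'', factorise by independence, and pay a Mills-ratio price for pushing the common part down.

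The paper works at the rescaled times $k/n$ with the Borovkov--Zhitlukhin process $X_k^H=((k/n)^HN_k-W_{(k/n)^{2H}})/\sqrt2$. Its covariance $\tfrac12\min((k/n)^{2H},(\ell/n)^{2H})$ keeps the growing correlation through a time-changed Brownian motion. The paper then controls $\max_k N_k$ through the Gumbel limit theorem. It handles the Brownian part by forcing $W_{n^{-2H}}\le-\sqrt{2\ln n}-1$ while the later increments stay below $1$.

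You instead observe that at integer times every covariance is at least $\tfrac12$, so a single factor $W/\sqrt2$ suffices. You also replace extreme-value theory by the elementary bound $(1-n^{-2})^n\ge\tfrac12$. In the end the paper's estimate only pays for the value of $W$ at the first time point $n^{-2H}$; the later increments contribute a constant factor. So its richer comparison structure is not really used, and both arguments arrive at a cost of the same order $\exp(-O((\ln n)^{1+2\delta}))$.

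Your version is fully self-contained: it needs only Slepian, a Chernoff tail bound and Mills' ratio, with explicit constants, and its only asymptotic input is the choice of $n_0$. The paper's finer comparison puts less variance into the independent noise. If one avoided its crude rounding, this would give a better constant in front of $n^{2H}\ln n$ in the exponent, but that is immaterial for the lemma.
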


\begin{proof}
    {\it Step 1: Reduction to a different process with a simpler structure.}
    
     Let $(W_t)_{t\geq 0}$ be a standard Brownian motion and let $(N_i)_{i\in \N}$ be a family of i.i.d.\ standard normal random variables that are independent of $(W_t)_{t\geq 0}$. We follow the approach of Borovkov and Zhitlukhin to show the following estimate derived in the proof of their theorem, namely inequality (3.1) in \cite{BZ18}, which states that the process
    \begin{align*}
        X_k^{H} := \frac{\left(\frac{k}{n}\right)^{H}N_k-W_{(k/n)^{2H}}}{\sqrt{2}},\qquad k=1,\ldots,n,
    \end{align*}
    satisfies $\p\big(\max_{k=1,\ldots,n}B_{k/n}^H \leq x\big)\geq\p\left(\max_{k=1,\ldots,n}X_k^H\leq x\right) $ for any $x\in \R$. This is a consequence of comparing the covariances of the processes and applying Slepian's lemma. Indeed, we have $\V[B_{k/n}^H] = (k/n)^{2H}= \V[X_k^H]$ and for $1 \leq k <\ell \leq n$
    \begin{align*}
        \Cov(B_{k/n}^H, B_{\ell/n}^H) &= \frac{1}{2}\left(\left( \frac{k}{n}\right)^{2H}+\left( \frac{\ell}{n}\right)^{2H}- \left( \frac{\ell-k}{n}\right)^{2H} \right) \\
        &\geq \frac{1}{2}\min\left(\left( \frac{k}{n}\right)^{2H}, \left( \frac{\ell}{n}\right)^{2H}\right) \\*
        &= \Cov(X_k^H, X_\ell^H).
    \end{align*}
    That means that we can estimate (using also the self-similarity):
    \begin{align*}
       \p\left(\max_{k=1,\ldots,n}B_{k}^H \leq 0\right)=\p\left(\max_{k=1,\ldots,n}B_{k/n}^H \leq 0\right) 
        \geq \p\left(\max_{k=1,\ldots,n}X_k^H \leq 0\right).
    \end{align*}
    I.e.\ we are left to find a lower bound for $\p\left(\max_{k=1,\ldots,n}X_k^H \leq 0\right)$.
    
    {\it Step 2: Separation of the ingredients of $X$.}
    
    We will use a standard result from extreme value theory that can be found e.g.\ as Theorem~4.4.8 in \cite{Embrechts1997}, which states that
    \begin{align} \label{eqn:extremvaluetheory}
        \p\left(\max_{k=1,\ldots,n}N_k \leq b_n + x a_n\right)\rightarrow e^{-e^{-x}}, \quad \text{for all $x \in \R$},
    \end{align}
    where $a_n := \frac{1}{\sqrt{2\ln(n)}}$ and $b_n := \sqrt{2\ln(n)}-\frac{\ln(\ln(n))+\ln(4\pi)}{2\sqrt{2\ln(n)}}$. % This allows to  control the first part of the random variables $(X_k)_{k\in \N}$:
    This implies:
    \begin{align*}
        \p\left(\max_{k=1,\ldots,n}B_{k/n}^H \leq 0\right) &\geq \p\left( \max_{k=1,\ldots,n} \frac{\left(k/n\right)^{H}N_k-W_{(k/n)^{2H}}}{\sqrt{2}}\leq 0\right) \\
         &\geq \p\left(\max_{k=1,\ldots,n}(k/n)^{H}N_k \leq b_n, -\min_{k=1,\ldots,n}W_{(k/n)^{2H}}\leq -b_n\right) \\
        &= \p\left(\max_{k=1,\ldots,n}(k/n)^{H}N_k \leq b_n\right)\cdot \p\left( -\min_{k=1,\ldots,n}W_{(k/n)^{2H}}\leq -b_n\right) \\
        &\geq \p\left(\max_{k=1,\ldots,n}N_k \leq b_n+a_n\cdot 0\right)\cdot \p\left( \max_{k=1,\ldots,n}W_{(k/n)^{2H}}\leq -b_n\right),
    \end{align*}
    where we used the independence of $(N_k)_{k\in \N}$ and $(W_t)_{t\geq 0}$  in the second last step. By (\ref{eqn:extremvaluetheory}), we can bound the first expression by a constant $c_{0} > 0$ for large enough $n$. 
    
    {\it Step 3: Computation of the main term.}
    
    We can continue to estimate
    \begin{align*}
            \p\left(\max_{k=1,\ldots,n}B_k^H \leq 0\right) &\geq c_0\ \p\left(\max_{k=1,\ldots,n}W_{(k/n)^{2H}}\leq -\sqrt{2\ln(n)}+ \frac{\ln(\ln(n))+\ln(4\pi)}{2\sqrt{2\ln(n)}}\right)
        \\
        &\geq c_0\ \p\left(\max_{k=1,\ldots,n}W_{(k/n)^{2H}}-W_{(1/n)^{2H}}\leq 1, W_{(1/n)^{2H}}\leq -\sqrt{2\ln(n)}-1\right) \\
        &= c_0\  \p\left(\max_{k=1,\ldots,n}W_{(k/n)^{2H}}-W_{n^{-2H}}\leq 1\right) \cdot  \p\left( W_{n^{-2H}}\leq -\sqrt{2\ln(n)}-1\right)  \\
        &\geq c_0\  \p\left(\max_{t\in [0, 1-n^{-2H}]}W_t\leq 1\right)\cdot  \p\left( W_{1}\leq \frac{-\sqrt{2\ln(n)}-1}{n^{-H}}\right)\\
        &\geq c_0\ \p\left(\max_{t\in [0, 1]}W_t\leq 1\right)\cdot  \p\left( W_{1}\leq \frac{-\sqrt{8\ln(n)}}{n^{-H}}\right),
    \end{align*}
    where we use the independent increments (step 3) and the stationary increments (step 4). Furthermore, in the last step we used that for $n$ large enough $\sqrt{2\ln(n)} \geq 1$. The first probability is a constant greater than zero and does not depend on $n$ or $H$. The second probability can be bounded from below using (\ref{eqn:Mills_Ratio_Inequalities}) and using that $H\leq \frac{\delta \ln(\ln(n))}{\ln(n)}$ which is equivalent to $n^{H}\leq \ln(n)^{\delta}$ so that
    \begin{align*}
        \p\left(\max_{k=1,\ldots,n}B_k^H \leq 0\right) &\geq  c_1 \p\left( W_{1}\leq -n^H\sqrt{8\ln(n)}\right) \\
     %   &\geq c_1\,\frac{1}{\sqrt{8\ln(n)}n^H+(\sqrt{8\ln(n)}n^H)^{-1}}e^{-4\ln(n)n^{2H}} \\
        &\geq c_1 \,\frac{1}{2\sqrt{8\ln(n)}n^H}e^{-4\ln(n)n^{2H}} \\
        &\geq \frac{c_1}{2\sqrt{8}}(\ln(n))^{-(1/2+\delta)}e^{-4\ln(n)^{1+2\delta}}\\
        &\geq c_2 e^{-c_3\ln(n)^{1+2\delta}},
    \end{align*}
    for some $c_1,c_2,c_3 > 0$,
    where we used that $\sqrt{8\ln(n)}n^H \geq1 \geq  (\sqrt{8\ln(n)}n^H)^{-1}$ and that $(\ln(n))^{-(1/2+\delta)}\geq e^{-\ln(n)^{1+2\delta}}$ for large enough $n$, independently of $H$.
\end{proof}

\subsection{A uniform upper bound for the persistence probability}

We now have everything in place to prove the next proposition, which provides an explicit upper bound for the persistence probability of an FBM with fixed Hurst exponent up to time $n\in\mathbb{N}$. The bound is stated in a form that makes the dependence on $H$ transparent. Moreover, it contains an additional discretizations parameter $m$ that reflects the sampling scheme used in the proof and will later be chosen as a function of $H$ to prevent the estimate from deteriorating as $H\downarrow 0$.

The key step in proving this upper bound is to compare the persistence probability below the level \(1\) with a corresponding persistence probability below the level \(-1\) for a discretely sampled version of the process. The latter can be controlled by Corollary~\ref{Cor:No_Cond_on_H}. To pass from the level \(-1\) to the level \(1\), we use a change-of-measure argument for the resulting Gaussian vector. The lower bounds from Lemmas~\ref{Lem:Case1} and~\ref{Lem:Case2} will then ensure that the cost of this change of measure can be controlled uniformly in the relevant regimes of \(H\).

\begin{Proposition}\label{Prop:No_cond_on_H}
    Let $\delta > 1/4$. There exist some constants $a, b > 0$ and some $n_0 \in \mathbb{N}$ such that for all $m \in \mathbb{N}$ and all $n \geq n_0$
    \begin{align}
        \p\left(\max_{t\in[0,n]}B_t^H \leq 1\right)\leq \frac{an^{H-1}}{m\sqrt{H}}e^{m^{2H}b\ln(nm)^{1/2+\delta}},  \label{No_cond_on_H}  
    \end{align}
    where $(B_t^H)_{t\geq 0}$ is a fractional Brownian motion with Hurst exponent  $H\in (0,1)$.
\end{Proposition}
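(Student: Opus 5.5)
We follow the change-of-measure scheme of \cite{Aurzada18}. First, discretize: since $\{\max_{t\in[0,n]}B_t^H\le 1\}\subseteq\{\max_{k=1,\ldots,nm}B^H_{k/m}\le 1\}$, it suffices to bound the probability that the Gaussian vector $G:=(B^H_{k/m})_{k=1,\ldots,N}$, $N:=nm$, stays below $1$. The target is the level $-1$, which Corollary~\ref{Cor:No_Cond_on_H} controls by $2n^{H-1}\E[M_1^H]/m\le c\,n^{H-1}/(m\sqrt H)$ via Lemma~\ref{Max_bound}. This already produces the prefactor $a n^{H-1}/(m\sqrt H)$ in (\ref{No_cond_on_H}). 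It remains to pay for shifting the barrier from $1$ to $-1$ with the exponential factor $e^{m^{2H}b\ln(nm)^{1/2+\delta}}$.

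For the shift we use a Cameron--Martin type argument on the finite-dimensional vector $G$, with covariance matrix $\Sigma$. Pick a deterministic shift $h\in\R^N$ in the range of $\Sigma$ with $h_k\ge 2$ for all $k$. Then
\[
\{G\le 1\}=\{G-h\le 1-h\}\subseteq\{G-h\le -1\}.
\]
The standard inequality for Gaussian shifts, obtained by Cauchy--Schwarz or Hölder applied to the density ratio $\exp(\langle \Sigma^{-1}h,x\rangle-\tfrac12\|h\|^2_{\Sigma^{-1}})$, gives
\[
\p(G\le 1)\le \p(G\le -1)^{1/p}\,e^{c_p\|h\|_{\Sigma^{-1}}^2}.
\]
Taking powers costs little, since exponents $1/p$ close to $1$ are absorbed through the $o(1)$ freedom. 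However, the prefactor must appear linearly, so a better route is the one in \cite{Aurzada18}. There, $h$ is chosen as $h=\Sigma u$ with $u\ge 0$ concentrated on the event of persistence, and the RKHS norm is compared with a lower bound for a persistence probability. Concretely, we use that the best constant relates to $-\ln\p(\max_{k\le N'}B_k^H\le 0)$. By the Gaussian correlation / Slepian argument, the event of staying below $0$ on a block of length $N'\approx m$ (in units of $1/m$) dominates the cost of lifting the barrier by a fixed amount. By self-similarity, lifting by $2$ at scale $1/m$ corresponds to lifting by $2m^H$ at unit scale, and the Gaussian cost is quadratic in the lift. This is where $m^{2H}$ should come from.

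The key quantitative input is a lower bound on discrete persistence probabilities uniform in $H$, namely $\p(\max_{k\le N}B_k^H\le 0)\ge e^{-b'\ln(N)^{1/2+\delta}}$ or similar. Lemma~\ref{Lem:Case1} gives $c/N^3=e^{-3\ln N+\ln c}$ when $H\ge\delta\ln\ln N/\ln N$. Lemma~\ref{Lem:Case2} gives $ae^{-b\ln(N)^{1+2\delta}}$ in the complementary regime. We split into these two cases with $N=nm$ and insert the corresponding bound into the change-of-measure estimate. The exponents must then be reconciled with the claimed $\ln(nm)^{1/2+\delta}$. Here we would exploit the freedom $\delta>1/4$: relabel $\delta$ in Lemma~\ref{Lem:Case2} by $\delta/2-1/4$ so that $1+2\delta'$ becomes $1/2+\delta$. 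Note that $3\ln N\le \ln(N)^{1/2+\delta}$ holds only when $\delta>1/2$, so for $\delta$ near $1/4$ this has to be handled via the prefactor of $m^{2H}\ge 1$ and suitable constants.

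The main obstacle is the change-of-measure step. It requires making the cost of moving the barrier from $1$ to $-1$ exactly of the form $m^{2H}b\ln(nm)^{1/2+\delta}$, uniformly in $H\in(0,1)$, while keeping the linear prefactor $n^{H-1}/(m\sqrt H)$ from Corollary~\ref{Cor:No_Cond_on_H}. I would first try the \cite{Aurzada18} device: condition on the first $N_0$ sampled values lying below a lowered level, use stationary increments to restart, and bound the conditioning cost by the reciprocal of the persistence lower bounds from Lemmas~\ref{Lem:Case1} and \ref{Lem:Case2}. The scaling by $m^H$ then enters via self-similarity when passing from grid $1/m$ to the integer grid.
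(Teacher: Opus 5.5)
\textbf{There is a genuine gap in the change-of-measure step.} Your outer steps are right: discretize to the grid $\{k/m\}$, bound the level $-1$ via Corollary~\ref{Cor:No_Cond_on_H} and Lemma~\ref{Max_bound}, and split into the regimes of Lemmas~\ref{Lem:Case1} and~\ref{Lem:Case2} with $N=nm$. But the change-of-measure step, which is the actual content of the proposition, is not carried out, and the accounting you sketch for it fails.

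The route you finally commit to is to condition on an initial block and pay the reciprocal of a persistence lower bound. That charges a cost linear in $\ln(1/p_{n,m})$: a factor $(nm)^3$ in Case~1 and $e^{b\ln(nm)^{1+2\delta}}$ in Case~2. Already for $m=1$ neither factor is $e^{O(\ln(n)^{1/2+\delta})}$ when $\delta<1/2$, which is the range needed later, and the Case~2 factor is too large for every $\delta$. Your observation that $3\ln N\le\ln(N)^{1/2+\delta}$ fails is exactly this symptom. Your proposed repairs do not work either. First, $m^{2H}$ can equal $1$. Second, relabelling $\delta'=\delta/2-1/4$ in Lemma~\ref{Lem:Case2} breaks the case split: both lemmas must share one threshold constant, and Lemma~\ref{Lem:Case1} needs it to exceed $1/4$. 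For $\delta<1/2$ your $\delta'$ is even negative.

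\textbf{The missing idea is that the cost of a Gaussian shift is a \emph{square root}.} The paper applies Proposition~3 of \cite{Au18},
$\p(X+f\in A)\ge\p(X\in A)\exp\bigl(-\sqrt{2\|f\|^2\ln(1/\p(X\in A))}-\|f\|^2/2\bigr)$,
with $A=\{\max_k B^H_{k/m}\le 1\}$ and the explicit shift $f=\frac{2}{\kappa_{n,m}}K^H(1/m,\cdot)$, where $\kappa_{n,m}=\min_{t\in T_{n,m}}K^H(1/m,t)\ge m^{-2H}/2$. This gives $f\ge 2$ on the grid and $\|f\|^2=(2/\kappa_{n,m})^2m^{-2H}\le 16m^{2H}$. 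Then $\sqrt{\ln(1/p_{n,m})}\lesssim\ln(nm)^{1/2+\delta}$ in both cases, with one and the same $\delta$. This is exactly where the exponent $1/2+\delta$ in the statement comes from.

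\textbf{The Hölder estimate you discarded actually works} if you let $p$ depend on $n,m,H$ instead of fixing it. In that estimate $c_p=\frac{1}{2(p-1)}$. Put $P:=c_0\,n^{H-1}/(m\sqrt H)$; without loss of generality $P<1$, otherwise the claim is trivial. Take $h=f$ as above. Since $P^{1/p}\le P\,e^{(p-1)\ln(1/P)}$, you get
\[
\p(G\le 1)\le P\exp\Bigl((p-1)\ln(1/P)+\frac{\|h\|^2}{2(p-1)}\Bigr)=P\,e^{\sqrt{2\|h\|^2\ln(1/P)}}\qquad\text{for } p-1=\frac{\|h\|}{\sqrt{2\ln(1/P)}}.
\]
Because $\ln(1/P)\le\ln(nm)+C$, this is at most $P\,e^{c'\,m^H\sqrt{\ln(nm)}}$, which implies the claim since $m^H\le m^{2H}$. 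This keeps the prefactor linear. Since only the known upper bound $P$ enters the logarithm, this version does not even need Lemmas~\ref{Lem:Case1} and~\ref{Lem:Case2}.

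In either version you must write down a concrete shift with $h\ge 2$ on the grid and $\|h\|^2=O(m^{2H})$. The heuristic ``lifting by $2m^H$ at unit scale'' is not yet such a shift.
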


\begin{proof}
We start by noting that
    \begin{align}
        \p\left(\max_{t\in[0,n]}B_t^H \leq 1\right) \leq \p\left(\max_{k=1,\ldots,nm}B_{k/m}^H\leq 1\right). \label{pos}
    \end{align}
    We can proceed by
    applying Corollary \ref{Cor:No_Cond_on_H} and Lemma \ref{Max_bound} to obtain
    \begin{align}
        \p\left(\max_{k=1,\ldots,nm}B_{k/m}^H \leq -1 \right) &\leq 2n^{H-1}\frac{\E\left[ \max_{t\in [0,1]}B_t^H\right]}{m} % \notag\\
        \leq n^{H-1}\,\frac{c_0}{m\sqrt{H}}, \label{neg}
    \end{align}
    for some constant $c_0 > 0$ independent of $H$. In order to compare (\ref{pos}) (note: boundary $+1$) and the probability in (\ref{neg}) (note: boundary $-1$), we want to apply Proposition~3 in \cite{Au18} (see Proposition~1.6 in \cite{aurzadaDereich2013AIHP427} for the original statement, which contains a small error, and the proof) to the discretely sampled fractional Brownian motion
\[
X^{H}_{n,m}:=\big(B_t^H\big)_{t\in T_{n,m}},\qquad 
T_{n,m}:=\Big\{\frac{k}{m}: k=1,\dots,nm\Big\}.
\]
This is a centered Gaussian vector in $\R^{|T_{n,m}|}$ with covariance kernel
\[
K^H(s,t):=\E[B_s^H B_t^H],\qquad s,t\in T_{n,m}.
\]
Let $\mathscr{H}_{n,m}$ denote the reproducing kernel Hilbert space (RKHS) associated with
$\big(B_t^H\big)_{t\in T_{n,m}}$, i.e.\ the completion of the linear span of $\{K^H(t,\cdot): t\in T_{n,m}\}$
equipped with the inner product
\[
\langle K^H(s,\cdot),K^H(t,\cdot)\rangle_{\mathscr H^{H}_{n,m}} = K^H(s,t),
\qquad s,t\in T_{n,m}.
\]
We set
\[
\kappa_{n,m}:=\inf_{t\in T_{n,m}} K^H(1/m,t).
\]
Since $t\ge m^{-1}$ for all $t\in T_{n,m}$, we have $K^H(1/m,t)\ge m^{-2H}/2$ and hence $\kappa_{n,m}\ge m^{-2H}/2$.
Define
\[
f^{H}_{n,m}(t):=\frac{2}{\kappa_{n,m}}\,K^H(1/m,t),\qquad t\in T_{n,m}.
\]
Then $f^{H}_{n,m}\in\mathscr H^{H}_{n,m}$ and $f^{H}_{n,m}(t)\ge 2$ for all $t\in T_{n,m}$.
Moreover,
\[
\|f^{H}_{n,m}\|^2_{\mathscr H^{H}_{n,m}}
=\Big(\frac{2}{\kappa_{n,m}}\Big)^2\langle K^H(1/m,\cdot),K^H(1/m,\cdot)\rangle_{\mathscr H^{H}_{n,m}}
=\Big(\frac{2}{\kappa_{n,m}}\Big)^2 K^H(1/m,1/m)
\le 16m^{2H}.
\]
This bound is a crucial uniform estimate in the change-of-measure argument.
    Using Proposition 3 in \cite{Au18} (in the third step), we can thus conclude that
    \begin{align}
        & \p\left(\max_{k=1,\ldots,nm}B_{k/m}^H \leq -1\right)  \notag
        \\
        &= \p(\forall t \in T_{n,m}: B_t^H + f_{n,m}^H(t) \leq -1 + f_{n,m}^H(t))\notag\\
        &\geq \p(\forall t \in T_{n,m}: B_t^H + f_{n,m}^H(t) \leq 1) \notag\\
        &\geq \p\left(\max_{k=1,\ldots,nm}B_{k/m}^H \leq 1\right)  \cdot\exp\left(-\sqrt{2\|f_{n,m}^H\|^2_{\mathscr{H}_{n,m}^H}\ln(1/p_{n,m})}-\|f_{n,m}^H\|_{\mathscr{H}_{n,m}^H}^2/2\right) \notag\\
        &\geq \p\left(\max_{k=1,\ldots,nm}B_{k/m}^H \leq 1\right) \cdot\exp\left(-m^{H}\sqrt{32\ln(1/p_{n,m})}-8m^{2H}\right), \label{neg2}
    \end{align}
    where
    \begin{align*}
        p_{n,m}:= \p\left(\max_{k=1,\ldots,nm}B_{k/m}^H \leq 1\right).
    \end{align*}
    If we have a suitable lower bound for $p_{n,m}$ we are finished with connecting (\ref{pos}) and (\ref{neg}). This is where Lemma~\ref{Lem:Case1} and Lemma~\ref{Lem:Case2} come into play. In order to use them, we will make a case distinction. However, we are not exactly in the setting of the two lemmas yet, which is why we use
    \begin{align*}
        p_{n,m}&=\p\left(\max_{k=1,\ldots,nm}B_{k/m}^H \leq 1\right) 
        \geq \p\left(\max_{j=1,\ldots,nm}B_{j/m}^H \leq 0\right)
        = \p\left(\max_{j=1,\ldots,nm}B_{j}^H \leq 0\right),
    \end{align*}
    where we used self-similarity in the last step. We are now finally able to apply Lemma \ref{Lem:Case1} and Lemma \ref{Lem:Case2} for the number of points $nm$ as follows. \\[0.3em]
    \textbf{Case 1: $H\geq \frac{\delta\ln(\ln(nm))}{\ln(nm)}$: } Since $\delta > 1/4$, we can apply Lemma \ref{Lem:Case1} which gives us
    \begin{align*}
        \p\left(\max_{j=1,2,\ldots,nm}B_j^H \leq 0\right) \geq \frac{c_1}{(nm)^3},
    \end{align*}
    for some constant $c_1 > 0$ and large enough $n$.\\[0.3em]
    \textbf{Case 2: $H < \frac{\delta \ln(\ln(nm))}{\ln(nm)}$:} Lemma \ref{Lem:Case2} gives us
    \begin{align*}
        \p\left(\max_{j=1,2,\ldots,nm}B_j^H \leq 0\right) \geq  c_2e^{-c_3\ln(nm)^{1+2\delta}},
    \end{align*}
    for some constants $c_2, c_3 > 0$ and large enough $n$.
    Let us furthermore define
    \begin{align*}
        \Omega:= \left\{H \geq \frac{\delta\ln(\ln(nm))}{\ln(nm)}\right\}.
    \end{align*}
    Putting together (\ref{neg}) and (\ref{neg2}), we can estimate
    \begin{align*}
        \p&\left(\max_{k=1,\ldots,nm}B_{k/m}^H\leq 1\right) \\
        &\quad = \p\left(\max_{k=1,\ldots,nm}B_{k/m}^H\leq 1\right) \1_{\Omega}+ \p\left(\max_{k=1,\ldots,nm}B_{k/m}^H\leq 1\right) \1_{\Omega^c}\\
        & \quad \leq \frac{c_0n^{H-1}}{m\sqrt{H}}\bigg(\1_\Omega\cdot \exp\left(m^{H}\sqrt{32\cdot 3 \cdot \ln(mn)-32\ln( c_1)}+8m^{2H}\right)\\
        &\quad \quad \quad +  \1_{\Omega^c}\cdot\exp\left(m^H\sqrt{32\cdot c_3\ln(mn)^{1+2\delta}-32\cdot \ln(c_2)}+8m^{2H}\right)\bigg) \\
        &\quad \leq \frac{c_{4}n^{H-1}}{m\sqrt{H}}e^{m^Hc_{5}\ln(mn)^{1/2+\delta}+c_6m^{2H}}\\
        &\quad \leq \frac{c_{4}n^{H-1}}{m\sqrt{H}}e^{m^{2H}c_{7}\ln(mn)^{1/2+\delta}},
    \end{align*}
    for some constants $c_4, c_5,c_6, c_7 > 0$ for large enough $n$, where we also used that $m^H \leq m^{2H}$. Combining this chain of inequalities with (\ref{pos}) shows the claim.
\end{proof}

%%%%%%%%%%%%%%%%%%%%%%%%%%%%%%%%%%%%%%%%%%%%%%%%%%%%%%%%%%
%%%%%%%%%%%%%%%%%%%%%%%%%%%%%%%%%%%%%%%%%%%%%%%%%%%%%%%%%%
%%%%%%%%%%%%%%%%%%%%%%%%%%%%%%%%%%%%%%%%%%%%%%%%%%%%%%%%%%
\subsection{Upper Bound}\label{sec_proof_of_UB}

With Proposition \ref{Prop:No_cond_on_H} at hand, we have everything in place to prove the upper bound in Theorem~\ref{Thm}. 

\begin{proof}[Proof of the upper bound in Theorem~\ref{Thm}] 
    The main idea is to apply Proposition \ref{Prop:No_cond_on_H} for some $\delta \in (1/4, 1/2)$ for any fixed $H\in (0,H_0]\cap (0,1)$  for $n := \lfloor T \rfloor$, with an appropriate choice of $m\in \mathbb{N}$ that may depend on $H$. Since $\frac{1}{\sqrt{H}}$ explodes as $H\rightarrow 0$, we will choose $m$ in such a way, that the r.h.s.\ of (\ref{No_cond_on_H}) does not explode. To do so, we make a case distinction on $H$ into a case where $H$ is bounded away from $0$ and the case where it is not. Let $c_0 > 0$ be a constant that will be chosen later.\\[0.3em]
    \textbf{Case 1:  $H> c_0$:} We can apply Proposition \ref{Prop:No_cond_on_H} with $m = 1$ to obtain
    \begin{align*}
        \p_B\left(\max_{t\in[0,T]}B_t^H \leq 1\right) &\leq \p_B\left(\max_{t\in[0,n]}B_t^H \leq 1\right)  \leq \frac{c_1n^{H-1}}{\sqrt{H}}e^{c_2\ln(n)^{1/2+\delta}}\leq \frac{c_1n^{H_0-1}}{\sqrt{c_0}}e^{c_2\ln(n)^{1/2+\delta}},
    \end{align*}
    for some constants $c_1, c_2 > 0$ that do not depend on $H$ or $m$,
    where we used that $H \leq H_0$.\\[0.3em]
    \textbf{Case 2: $H \leq c_0$:} We set $m:= \lceil \frac{1}{\sqrt{H}}\rceil^2$. With that 
    choice, we again apply Proposition \ref{Prop:No_cond_on_H} to estimate
    \begin{align}
        \p_B\left(\max_{t\in[0,T]}B_t^H \leq 1\right) &\leq \p_B\left(\max_{t\in[0,n]}B_t^H \leq 1\right) \notag\\ \notag
        &\leq \frac{c_1n^{H-1}}{m\sqrt{H}}e^{m^{2H}c_2\ln(nm)^{1/2+\delta}}\\ 
        &\leq \frac{c_1 n^{H_0-1}}{\sqrt{m}}e^{m^{2H}c_2\ln(nm)^{1/2+\delta}}\notag
    \end{align}
    where we used $\frac{1}{\sqrt{m}\sqrt{H}}\leq 1$ and $H \leq H_0$ in the last step. Since $\sqrt{m} = \lceil \frac{1}{\sqrt{H}}\rceil \leq \frac{1}{\sqrt{H}}+1 \leq \frac{2}{\sqrt{H}}$ we have 
    \begin{align*}
        m^{2H} \leq \left(\frac{4}{H}\right)^{2H} \leq 16 e^{2/e} =: c_3,
    \end{align*}
    for any $H \in (0,1)$.
    That means we can estimate 
    \begin{align}
         \p_B\left(\max_{t\in[0,T]}B_t^H \leq 1\right) &\leq \frac{c_1n^{H_0-1}}{\sqrt{m}}e^{c_2m^{2H}\ln(nm)^{1/2+\delta}} \notag \\ \notag
         &\leq \frac{c_1n^{H_0-1}}{\sqrt{m}}e^{c_2c_3\ln(nm)^{1/2+\delta}} \\ 
         &= \frac{c_1n^{H_0-1}}{\sqrt{m}}e^{c_4\ln(nm)^{1/2+\delta}}\label{stern}
    \end{align}
    for some constant $c_4 > 0$.
    Because of $1/2+\delta < 1$, there exists an $m_0 \in \mathbb{N}$ such that
    \begin{align*}
        c_4(2\ln(m))^{1/2+\delta}-\frac{1}{2}\ln(m) \leq 0 \quad \text{for all $m \geq m_0$}.
    \end{align*}
    Set $c_0:=1/m_0$. Then $H\leq c_0$ implies $m\geq \frac{1}{H} \geq m_0$. For such $m$, we further estimate
    \begin{align*}
        \frac{1}{\sqrt{m}}e^{c_4\ln(nm)^{1/2+\delta}} &= e^{c_4(\ln(n)+\ln(m))^{1/2+\delta}-\frac{1}{2}\ln(m)} \\ 
        &\leq \1_{\{n \geq m\}}e^{c_4(2\ln(n))^{1/2+\delta}}+\1_{\{n < m\}} e^{c_4(2\ln(m))^{1/2+\delta}-\frac{1}{2}\ln(m)}\\ 
        &\leq \1_{\{n \geq m\}}e^{c_4(2\ln(n))^{1/2+\delta}}+\1_{\{n < m\}}\\  \notag
        &\leq e^{c_{5}\ln(n)^{1/2+\delta}},  
    \end{align*}
    for some constant $c_{5} > 0$ that does not depend on $H$. Inserting this into (\ref{stern}) yields
    \begin{align*}
         \p_B\left(\max_{t\in[0,T]}B_t^H \leq 1\right) &\leq c_1n^{H_0-1}e^{c_{5}\ln(n)^{1/2+\delta}}.
    \end{align*}
    Piecing the two cases together gives us the following inequality
    \begin{align*}
        \p_B\left(\max_{t\in[0,T]}B_t^H \leq 1\right) &\leq \1_{\{H > c_0\}}  \frac{c_1n^{H_0-1}}{\sqrt{c_0}}e^{c_2\ln(n)^{1/2+\delta}}
         + \1_{\{H \leq c_0\}} c_1n^{H_0-1}e^{c_5\ln(n)^{1/2+\delta}}\\*
        &\leq c_6n^{H_0-1}e^{c_7\ln(n)^{1/2+\delta}},
    \end{align*}
    for some constants $c_6, c_7 > 0$ independent of $H$.
    Since the r.h.s.\ of the inequality above does not depend on $H$ anymore, we can use this estimate to obtain our upper bound of the persistence probability of interest:
    \begin{align*}
        \E_\mathcal{H}\left[\p_B\left(\max_{t\in[0,T]}B_t^\mathcal{H} \leq 1\right)\right] \leq c_6n^{H_0-1}e^{c_7\ln(n)^{1/2+\delta}}.
    \end{align*}
    We proceed using $n^{H_0-1} = O(T^{H_0-1})$ and $c_6e^{c_{7}\ln(n)^{1/2+\delta}} = O(T^{o(1)})$ because of $n = \lfloor T \rfloor\sim T$ for $T \rightarrow \infty$ and $\delta < 1/2$, which implies the upper bound in Theorem \ref{Thm}.
\end{proof}

\section{Summary}
We have studied the persistence probability of fractional Brownian motion with random Hurst exponent, as introduced in \cite{balcerek2022_FBMre_chaos}. In this model, the Hurst parameter is sampled once and then kept fixed along the whole trajectory.
If \(H_0\) denotes the essential supremum of the distribution of the random Hurst exponent then the persistence probability decays as
\[
    \E_\mathcal{H}\left[\p_B\left(\max_{t\in [0,T]}B_t^\mathcal{H} \leq 1\right)\right]=T^{-(1-H_0)+o(1)} .
\]
Thus the annealed persistence exponent is governed by the upper edge of the distribution of \(\mathcal{H}\).

We also obtained the corresponding small-barrier asymptotics on a fixed time interval. While the small-barrier result follows from monotonicity in \(H\) of the relevant probability, the long-time persistence problem requires uniform estimates that remain stable as \(H\downarrow0\). This is the main technical difficulty addressed in this paper.

%%%%%%%%%%%%%%%%%%%%%%%%%%%%%%%%%%%%%%%%%%%%%%%%%%%%%%%%%%
%%%%%%%%%%%%%%%%%%%%%%%%%%%%%%%%%%%%%%%%%%%%%%%%%%%%%%%%%%
%%%%%%%%%%%%%%%%%%%%%%%%%%%%%%%%%%%%%%%%%%%%%%%%%%%%%%%%%%

\normalem

\bibliographystyle{alpha}
\newcommand{\etalchar}[1]{$^{#1}$}

\appendix

\end{document}